\documentclass[reqno]{amsart}
\usepackage{url}
\usepackage{amssymb,amsthm,amsfonts,amstext}
\usepackage{amsmath}

\usepackage{mathptmx}       
\usepackage{helvet}         
\usepackage{courier}        

\usepackage{graphicx}
\usepackage{enumerate}
\usepackage[active]{srcltx}
\usepackage{mathrsfs}

\usepackage{tikz}
\usetikzlibrary{backgrounds}
\usetikzlibrary{patterns,fadings}
\usetikzlibrary{arrows,decorations.pathmorphing}
\usetikzlibrary{calc}
\definecolor{light-gray}{gray}{0.95}
\usepackage{graphicx}
\usepackage{epsfig}
\usetikzlibrary{shapes}
\usetikzlibrary{decorations}

\usetikzlibrary{decorations.pathmorphing}
\usetikzlibrary{decorations.pathreplacing}
\usetikzlibrary{decorations.shapes}
\usetikzlibrary{decorations.text}
\usetikzlibrary{decorations.markings}
\usetikzlibrary{decorations.fractals}
\usetikzlibrary{decorations.footprints}


\usepackage{color}



\newcommand\bx{{\mathbf x}}
\newcommand\by{{\mathbf y}}

\newcommand\be{{\mathbf e}}

\newcommand\bk{{\mathbf k}}

\newcommand\ve{\varepsilon}

\newtheorem{theorem}{Theorem}[section]
\newtheorem{lemma}[theorem]{Lemma}
\newtheorem{proposition}[theorem]{Proposition}
\newtheorem{corollary}[theorem]{Corollary}
\newtheorem{remark}[theorem]{Remark}
\newtheorem{definition}[theorem]{Definition}

\numberwithin{equation}{section}

\newcommand{\sfrac}[2]{{\smash{\frac{#1}{#2}}}}

\newcommand{\mc}[1]{{\mathcal #1}}
\newcommand{\mf}[1]{{\mathfrak #1}}

\newcommand{\bb}[1]{{\mathbb #1}}

\renewcommand{\>}{\rangle}

\renewcommand{\epsilon}{\varepsilon}

\newcommand{\RR}{\mathbb R}
\newcommand{\R}{\mathbb R}
\newcommand{\Z}{\mathbb Z}
\newcommand{\cF}{\mathcal F}




\newcommand{\cj}{\mathcal{J}}

\newcommand{\cv}{\mathcal{V}}


\newcommand{\C}{\mathbb{C}}

\newcommand{\NN}{\mathbb{N}}

\newcommand{\TT}{\mathbb{T}}

\newcommand{\ZZ}{\mathbb{Z}}

\newcommand{\cL}{\mathcal{L}}


\let\ve=\varepsilon

\let\ve=\varepsilon


\setlength{\parskip}{0.3ex plus 0.2ex minus 0.2ex}

\parindent 0ex
\parskip 1ex


\begin{document}

\author{C\'edric Bernardin}

\address{\noindent Universit\'e de Nice Sophia-Antipolis, Laboratoire J.A. Dieudonn\'e, UMR CNRS 7351, Parc Valrose, 06108 Nice cedex 02, France. \newline e-mail: \rm \texttt{cbernard@unice.fr}}

\author{Patr\'icia Gon\c{c}alves}

\address{\noindent Departamento de Matem\' atica, PUC-RIO, Rua Marqu\^es de S\~ao Vicente, no. 225, 22453-900, Rio de Janeiro, Rj-Brazil and
CMAT, Centro de Matem\'atica da Universidade do Minho, Campus de Gualtar, 4710-057 Braga, Portugal.\newline e-mail: \rm \texttt{patg@math.uminho.pt and patricia@mat.puc-rio.br}}

\author{Milton Jara}
\address{ Milton Jara\\ IMPA\\ Estrada Dona Castorina 110\\ Jardim Bot\^anico\\ CEP 22460-340\\ Rio de Janeiro\\ Brazil
 \newline e-mail: \rm \texttt{mjara@impa.br}}

\author{Makiko Sasada}
\address{Department of Mathematics, Keio University, 3-14-1,
Hiyoshi, Kohoku-ku, Yokohama-shi, Kanagawa, 223-8522, Japan. \newline e-mail: \rm \texttt{sasada@math.keio.ac.jp}}

\author{Marielle Simon}
\address{\noindent Departamento de Matem\' atica, PUC-RIO, Rua Marqu\^es de S\~ao Vicente, no. 225, 22453-900, Rio de Janeiro, Rj-Brazil and
UMPA ENS de Lyon, 46 all\'ee d'Italie, 69007 Lyon, France. 
 \newline e-mail: \rm \texttt{marielle.simon@mat.puc-rio.br}}

\title[]{From normal diffusion to superdiffusion of energy in the evanescent flip noise limit}

\noindent\keywords{}

\begin{abstract}
We consider a harmonic chain perturbed by an energy conserving noise depending on a parameter $\gamma$. When $\gamma$ is of order one, the energy diffuses according to the standard heat equation after a space-time diffusive scaling. On the other hand, when $\gamma=0$, the energy superdiffuses according to a $3/4$ fractional heat equation after a subdiffusive space-time scaling. In this paper, we study the existence of a crossover between these two regimes as a function of $\gamma$.
\end{abstract}

\maketitle

\section{Introduction}

Over the last few years,  superdiffusion of energy in one-dimensional Hamiltonian systems conserving momentum has attracted a lot of interest. In particular it is expected to hold for one-dimensional chains of oscillators when unpinned, and more generically for dynamical systems which preserve momentum in addition to the energy (see the review papers \cite{LLP, D}). A proof of this behavior is still unreached even if some recent progresses have been accomplished \cite{Sp}.

In order to get rigorous results in this field it has been proposed to perturb the Hamiltonian systems by a stochastic noise conserving energy and momentum.  In \cite{BBO2} the thermal conductivity is proved to be infinite for an unpinned harmonic chain of oscillators perturbed by an energy-momentum conserving noise. When the stochastic perturbations of harmonic systems do not conserve momentum or if the chain is pinned (so that momentum is not conserved), the thermal conductivity is always finite \cite{BO2, BLL} and energy diffuses. The progresses in the anharmonic case are still modest (see \cite{BG} for the case of exponential interactions).

In this work we focus on the harmonic chain of oscillators perturbed by an energy conserving noise, as considered in \cite{BerSto}. After a simple symplectic change of variables, the dynamical state of the harmonic chain is $(\omega_x)_{x\in \ZZ} \in \RR^{\ZZ}$, the energy is $\sum_{x\in\mathbb{Z}} \omega_x^2$ and Newton's equations are given by
\begin{equation}
\label{eq:dyneq}
d\omega_{x} (t) =\big(\omega_{x+1}  (t) - \omega_{x-1}(t) \big) dt, \quad x \in \ZZ.
\end{equation}
There are several ways to perturb the dynamics in order to conserve the energy. The \emph{flip} noise changes $\omega_x$ into $-\omega_x$ independently on each site $x$ at random exponential times with intensity $\gamma \geqslant 0$. Observe that the total energy is conserved by the flip noise. The \emph{exchange} noise exchanges nearest neighbor values $\omega_x$ and $\omega_y$ at random Poissonian times with intensity $\lambda \geqslant 0$. Apart from the energy, this second perturbation conserves the so-called \emph{total volume} of the chain $\sum_{x\in\mathbb{Z}} \omega_x$. The harmonic chain perturbed by the flip noise is diffusive \cite{BerPspde,Sim13} while the harmonic chain perturbed by the exchange noise is superdiffusive \cite{BerSto}. This drastic difference is due to the volume conservation which plays a role similar to the momentum conservation in chains of oscillators (see \cite{BerSto} for more explanations).

In this work we consider the harmonic chain perturbed by the flip noise with intensity $\gamma>0$ and the  exchange noise with intensity $\lambda > 0$. The two noises conserve the energy $\sum_{x\in\mathbb{Z}} \omega_x^2$  but only the latter conserves the total volume $\sum_{x\in\mathbb{Z}} \omega_x$. If $\gamma = 0$, the volume is conserved, the energy transport is superdiffusive and described by a Levy process governed by a fractional Laplacian. This has been recently proved in  \cite{BGJ}. If $\gamma > 0$, the volume is no longer conserved and one can prove that the energy transport is diffusive and described by a Brownian motion. Our aim is to study the case $\gamma \to 0$, with $\lambda$ of order $1$, and to obtain a crossover in a suitable time scale between these two very different regimes. The strength of $\gamma$ is regulated by a scaling parameter $n^{-1}$ going to $0$ and we take $\gamma \sim n^{-b}$ where $b>0$. We investigate the time scale $tn^{a}$ ($a>0$) that we have to consider in order to see some macroscopic evolution of the energy and we identify this evolution.


When $\gamma$ is sufficiently small, i.e. $b$ sufficiently large, we follow the approach of \cite{BGJ} and show that the energy superdiffuses. This can be proved for $b>1$ and the superdiffusion is described by the same Levy process as in the $\gamma=0$ case. If $\gamma$ is not sufficiently small, i.e. $b$ is too small, the techniques of \cite{BGJ} fail and we use Varadhan's approach which consists to decompose the energy current $j_{x,x+1}$ between the site $x$ and the site $x+1$ as a sum of a discrete gradient term $\nabla f_x$ and a small fluctuating term. This decomposition is known in the mathematical literature as a \emph{fluctuation-dissipation} equation and can be understood as a microscopic version of Fourier's law. In our case, some new interesting features appear with respect to the models for which normal behavior is usually observed when writing the fluctuation-dissipation equation. Indeed, the function $f_x$ involved is no longer local (see also \cite{BasOlla} for similar features).  We are able to write the fluctuation-dissipation equation for any value of $b \geqslant 0$ but we are only able to identify the limit of the energy fluctuation field for $b\in [0,2/3)$. The case $b \in [2/3 , 1]$ remains open.

\section{Model and notations}

We consider an infinite chain of harmonic oscillators at equilibrium perturbed by an energy conserving noise. The space of configurations is given by $\Omega=\R^\Z$. We say that a function $f:\Omega\to\R$ is \emph{local} if there exists a finite subset $\Lambda$ of $\Z$ such that the support of $f$ is included in $\Lambda$. Let $n\geqslant 1$ be a scaling parameter. The Liouville operator corresponding to the harmonic chain (\ref{eq:dyneq}) is given by
\begin{align*}
\mc A = \sum_{x \in \ZZ}  \left(\omega_{x+1}-\omega_{x-1}\right) \partial_{\omega_x}.
\end{align*}
The generator of the perturbed harmonic chain under investigation is given by
\begin{equation*}
{\mc L_n= \mc A+\gamma_n {\mc S}^{\text{flip}}+\lambda {\mc S}^{\text{exch}}}
\end{equation*}
where for all smooth local bounded functions $f: \Omega \to \R$  we defne
\begin{align*}
{\mc S}^{\text{flip}} f(\omega)&= \sum_{x\in \ZZ}f(\omega^x)-f(\omega),\\
{\mc S}^{\text{exch}} f(\omega) &=\sum_{x \in \ZZ} f(\omega^{x,x+1})-f(\omega).
\end{align*}
Here, the configuration $\omega^{x}$ is the configuration obtained from $\omega$ by flipping the variable $\omega_x$, i.e. $(\omega^x)_z =\omega_z$,  if $z \neq x$, and  $(\omega^x)_x=-\omega_x$. The configuration $\omega^{x,x+1}$ is obtained from $\omega$ by exchanging $\omega_x$ and $\omega_{x+1}$, i.e. $(\omega^{x,x+1})_z=\omega_z$, if $z \neq x,x+1$, and $(\omega^{x,x+1})_x=\omega_{x+1}, (\omega^{x,x+1})_{x+1}=\omega_x$. We denote by ${\mc S}_n:=\gamma_n {\mc S}^{\text{flip}}+\lambda {\mc S}^{\text{exch}}$ the total generator of the noise, where $\gamma_n,\lambda >0$ are two positive parameters which regulate the respective strengths of the noises. We assume that
\begin{equation*}
\gamma_n = \frac{c}{n^b}, \quad c,b>0.
\end{equation*}

The process with generator ${\mc L}_n$ is denoted by $\{\omega (t)\}_{t \geqslant 0} := \{ \omega_x (t) \, ; \, x \in \ZZ\}_{t \geqslant 0}$.

The Gibbs equilibrium measures of $\{ \omega (t)\}_{t \geqslant 0}$ are given by the Gaussian product probability measures
\begin{equation*}
\mu_{\beta} (d\omega) = \prod_{x \in \ZZ} \sqrt{{\frac{\beta}{2\pi}}} \exp\Big(-\frac{\beta \omega_x^2 }{2}\Big) d\omega_x,
\end{equation*} where $\beta >0$ stands for the inverse temperature.
In the following, the expectation of a function $f$ with respect to $\mu_\beta$ is denoted by $\langle f \rangle_\beta$ and the covariance between functions $f,g$ with respect to $\mu_\beta$ is denoted by  $\langle f ; g\rangle_\beta$.

We consider the dynamics starting from a Gibbs equilibrium  measure at a fixed temperature $\beta^{-1}$ in the time scale $tn^a$, $a>0$. The existence of the infinite dynamics under this initial distribution, can be proved by following for instance \cite{LLL} or \cite{ffL}. We denote by ${\mc S}(\RR)$ the Schwarz space of rapidly decreasing functions.

Let us fix a time horizon line $T>0$. We define the energy fluctuation field $\{ {\mc E}^n_t \; ; \; t \in [0,T]\}$  in the time scale $tn^a$ as the  ${\mc S}(\RR)$-valued process given by
\begin{equation*}
{\mc E}_t^n (f) = \tfrac{1}{\sqrt{n}} \sum_{x \in \ZZ} f \big(\tfrac{x}{n}\big) \left\{\omega^2_x (tn^a) -\beta^{-1} \right\}.
\end{equation*}

Similarly, the volume fluctuation field $\{ {\mc V}^n_t \; ; \; t \in [0,T]\}$ in the time scale $tn^a$ is  the  ${\mc S}(\RR)$-valued process given by
\begin{equation*}
{\mc V}_t^n (f) = \tfrac{1}{\sqrt{n}} \sum_{x \in \ZZ} f \big(\tfrac{x}{n}\big) \omega_x (tn^a).
\end{equation*}
It is not difficult to check that these two fields are well-defined almost surely.

%
%
%

\section{Statement of the results}

Given two functions $f,h \in {\mc S}(\RR)$, we look at the evolution with $t$ of the space-time correlation function of the energy fluctuation field
\begin{equation*}
\begin{split}
\sigma^n_{t} (f,h)&= \langle {\mc E}_t^n (f) \, ; \, {\mc E}_0^n (h) \rangle_\beta\\
&=\tfrac{1}{n}\sum_{x,y\in \ZZ} f\big(\tfrac{x}{n}\big)h\big(\tfrac{y}{n}\big) \left\langle \big(\omega^2_x(tn^a)-\beta^{-1} \big)\big(\omega^2_y(0)-\beta^{-1}\big)\right\rangle_\beta\\
& =\tfrac{1}{n}\sum_{y,z\in \ZZ} f\big(\tfrac{y+z}{n}\big)h\big(\tfrac{y}{n}\big) \left\langle \omega^2_z(tn^a) \big(\omega^2_0(0)-\beta^{-1}\big)\right\rangle_\beta,
\end{split}
\end{equation*}
when $n$ goes to infinity. The choice of $a>0$ fixes the time scale.

\begin{theorem}
\label{th:1}
Let us assume that $a=2-b/2$ and $b<2/3$.  Let $f,h\in{\mc S}(\RR)$ and let us fix $t>0$. Then,
\[\lim_{n\to\infty} \sigma_t^n(f,h)=\frac{1}{\beta^2\sqrt{\pi t \kappa}}\iint_{\R^2} {d}u {d}v f(u)h(v)\exp\left(-\frac{(u-v)^2}{4t\kappa}\right),\]
where \[\kappa=\begin{cases} \displaystyle \frac{1}{\sqrt{2\lambda c}} & \text{ if } b\in (0, 2/3),\\
\lambda +\displaystyle \frac{1}{\sqrt{2\lambda c}} & \text{ if } b=0.\end{cases}\]
\end{theorem}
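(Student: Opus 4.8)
The plan is to exploit the exact solvability of the model. Since the flip and exchange noises act linearly on the configuration and the harmonic flow is linear, one has $\omega(t)=\Phi_t\,\omega(0)$ for a random orthogonal operator $\Phi_t$, orthogonality being exactly conservation of the energy $\sum_x\omega_x^2$. Conditioning on the realization of the noise and applying Wick's theorem to the Gaussian initial data $\mu_\beta$, a short computation gives $\langle\omega_z^2(tn^a)(\omega_0^2(0)-\beta^{-1})\rangle_\beta=2\beta^{-2}\rho^n_t(z)$, where $\rho^n_t(z):=\mathbb{E}[(\Phi_{tn^a})_{z,0}^2]$ is the mean energy transported from $0$ to $z$. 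Orthogonality yields $\sum_z\rho^n_t(z)=1$, so that $\rho^n_t$ is a genuine probability kernel on $\mathbb{Z}$. Substituting into the expression for $\sigma^n_t(f,h)$ and reorganising the double sum, the theorem reduces to showing that the rescaled kernel $u\mapsto n\,\rho^n_t(\lfloor nu\rfloor)$ converges to the Gaussian density $g_{t\kappa}(u)=(4\pi t\kappa)^{-1/2}\exp(-u^2/(4t\kappa))$; the resulting prefactor $2\beta^{-2}(4\pi t\kappa)^{-1/2}$ equals $(\beta^2\sqrt{\pi t\kappa})^{-1}$, matching the claimed constant, and the convolution structure produces exactly the dependence on $(u-v)^2$.

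Next I would write down the closed evolution of the full two-point function $\Psi^n_t(z,z'):=\mathbb{E}[(\Phi_{tn^a})_{z,0}(\Phi_{tn^a})_{z',0}]$, whose diagonal is $\rho^n_t$. Because $\mathcal{L}_n$ maps quadratic observables into quadratic observables, $\Psi^n_t$ solves an autonomous linear system on $\mathbb{Z}^2$ with point-mass initial datum $\Psi^n_0=\delta_{(0,0)}$. Passing to the variables $\bar z=(z+z')/2$ and $k=z'-z$ reveals three mechanisms: the Liouville part $\mathcal{A}$ couples the diagonal $k=0$ (the energy) to the first off-diagonals and, more generally, produces a transport in $k$ weighted by the macroscopic wavenumber; the flip noise contributes a pure killing at rate $4\gamma_n$ acting only off the diagonal, since $\mathcal{S}^{\text{flip}}(\omega_z^2)=0$ while $\mathcal{S}^{\text{flip}}(\omega_z\omega_{z'})=-4\omega_z\omega_{z'}$ for $z\neq z'$; and the exchange noise produces a nearest-neighbour hopping in $k$ at rate $\lambda$. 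Taking the Fourier transform in $\bar z$ (wavenumber $\xi$) and the Laplace transform in time turns the problem into the inversion of a tridiagonal-in-$k$ operator sourced at $k=1$.

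To identify $\kappa$ I would adiabatically eliminate the fast off-diagonal modes. The energy obeys, in Fourier, $\partial_t\hat\rho\simeq 2i\xi\,\hat g^{(1)}-\lambda\xi^2\hat\rho$, where $\hat g^{(1)}$ is the nearest off-diagonal; solving the tridiagonal system for $\hat g^{(1)}$ in terms of the source $i\xi\hat\rho$ amounts to evaluating at $k=1$ the half-line resolvent of the operator (exchange hopping $+$ flip killing). Its one-dimensional massive Green's function behaves, after a direct computation, like $(2\lambda\gamma_n)^{-1/2}$, producing a Green--Kubo contribution $(2\lambda\gamma_n)^{-1/2}=(2\lambda c)^{-1/2}n^{b/2}$ to the microscopic diffusivity, in addition to the direct exchange contribution $\lambda$. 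The effective symbol is therefore $-[\lambda+(2\lambda c)^{-1/2}n^{b/2}]\xi^2$; with $\xi=\theta/n$ and the time scale $tn^a$, $a=2-b/2$, the bracket multiplied by $n^{a-2}=n^{-b/2}$ converges to $(2\lambda c)^{-1/2}$ when $b>0$ and to $\lambda+(2\lambda c)^{-1/2}$ when $b=0$, which are precisely the two values of $\kappa$. Inverting the Fourier--Laplace transforms then yields the Gaussian limit of $\rho^n_t$.

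The main obstacle is to make this adiabatic elimination rigorous, that is, to control the off-diagonal resolvent uniformly in $k$ and in the noise and to show that the non-local remainder is negligible. This is where the fluctuation--dissipation equation $j_{x,x+1}=\nabla h_x+\mathcal{L}_n g_x$ enters: the corrector $g$ is supported on $\sim\gamma_n^{-1/2}\sim n^{b/2}$ off-diagonals and has squared $\ell^2$-norm of order $\gamma_n^{-3/2}\sim n^{3b/2}$, so it is genuinely non-local, as anticipated in the introduction. Balancing these growing norms against the diffusive scaling $a=2-b/2$ is what forces the restriction $b<2/3$, and it is exactly this estimate that degenerates in the range $b\in[2/3,1]$, which consequently remains open. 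In practice I would first treat $b=0$, where all scales are of order one and no non-local corrector is required, and then track the $\gamma_n$-dependence of the resolvent bounds to cover the full range $b\in(0,2/3)$.
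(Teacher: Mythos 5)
Your first two paragraphs are correct and follow a genuinely different route from the paper's proof. The reduction via conditioning on the noise, orthogonality of $\Phi_t$ and Wick's theorem does give $\langle \omega_z^2(tn^a)\, ;\, \omega_0^2(0)\rangle_\beta = 2\beta^{-2}\,\mathbb{E}\big[(\Phi_{tn^a})_{z,0}^2\big]$, with the prefactor matching the constant in Theorem \ref{th:1}, and the two-point function $\Psi^n_t$ does satisfy a closed linear evolution --- this is exactly the structure the paper exploits in Proposition \ref{prop:equadiff} and Appendix \ref{appc}, but only in the regime $b>1$. Your ``tridiagonal system in $k$ sourced at $k=1$'' is, in Fourier variables, literally the paper's fluctuation--dissipation system \eqref{eq:65}, whose solution $\widehat\rho_k=\widehat\rho_1 X^{k-1}$ with $\widehat\rho_1(0)\approx -(2\lambda\gamma_n)^{-1/2}$ is your half-line Green's function; so your identification of $\kappa$ is right, including the extra $\lambda$ at $b=0$. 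The paper, by contrast, proves Theorem \ref{th:1} probabilistically: it keeps the Markov process, writes $j^A_{0,1}=\mathcal L_n u-\mathcal A u$ as in \eqref{eq:FD}, and splits $\sigma^n_t$ into a martingale-type term $U^n_t$ (needs $b<1$), a gradient term $K^n_t$ giving $\kappa f''$ (needs $b<2$), and a remainder $\Psi^n_t$ controlled by a variance bound for additive functionals (needs $b<2/3$).

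The genuine gap is in your last two paragraphs: the adiabatic elimination is only stated, and its error analysis is the entire analytic content of the theorem. In Fourier--Laplace variables the elimination is an exact Schur complement, and to conclude you must control: (i) the fact that the off-diagonal block is not merely ``exchange hopping $+$ flip killing'' but also contains the $\mathcal A$-transport in $k$, an anti-Hermitian hopping with coefficients of size $|e^{2i\pi\xi/n}-1|$, whose effect on the boundary Green's function must be shown to be subleading; (ii) the replacement of the resolvent at the Laplace point $zn^{-a}$ by the resolvent at $0$; (iii) uniformity in $\xi$ sufficient to integrate against $\mathcal F_n(f)\,\mathcal F_n(h)$ and invert the transforms. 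None of these steps is sketched, and they are not routine: crude perturbative bounds for (i) produce a relative error of order $|\xi|\, n^{b-1}$, i.e.\ they degenerate at $b=1$, not at $b=2/3$.

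This exposes the second problem: your explanation of the restriction $b<2/3$ is an assertion fitted to the statement, not a derivation, and it is quantitatively wrong. The corrector norms you invoke enter the paper's proof only through the Kipnis--Varadhan-type variance bound behind the Boltzmann--Gibbs lemma and Appendix \ref{app:A}; in your deterministic scheme there is no additive-functional variance to bound, so that mechanism cannot be imported. Moreover, the corrector's squared $\mathbf L^2$-norm is of order $\gamma_n^{-1}=n^{b}$, not $\gamma_n^{-3/2}=n^{3b/2}$, and the paper's threshold actually comes from the resolvent estimate $\langle\varphi,([tn^a]^{-1}-\mathcal S_n)^{-1}\varphi\rangle\leqslant Cn^{2b}$ (with respect to the symmetric part $\mathcal S_n$ only) together with the prefactor $n^{a-3}$, i.e.\ from $a-3+2b<0$, which at $a=2-b/2$ gives $b<2/3$. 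If your program were carried out honestly, its natural limitation would sit near $b=1$ --- which would in fact be a stronger theorem, covering part of the window $[2/3,1]$ that the paper explicitly leaves open. That you recover the stated threshold without any estimate is the clearest sign that the decisive step is missing: either prove (i)--(iii), or fall back on the paper's probabilistic argument.
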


One can also prove that the fluctuation field ${\mc E}_t^n$ converges in law to the infinite dimensional Ornstein-Uhlenbeck (OU) process ${\mc E}_t$, solution of the linear stochastic partial differential equation \[ \partial_t {\mc E}=\kappa \partial_u^2 {\mc E} \, dt + \sqrt{4\kappa\beta^{-2} } \, \partial_u B(u,t),\] where $B$ is the standard normalized space-time white noise. This extension of Theorem \ref{th:1} is standard, and we refer to \cite{KL} for more details.

For the sake of simplicity, hereafter we assume $\lambda=1$. The same computations for any $\lambda >0$ could be done, but become significantly more technical. In Section \ref{sec:bgrand} we prove the following theorem.

\begin{theorem}
\label{theo2}
Let $\{P_t\}_{t\geqslant 0}$ be the semigroup generated by the infinitesimal generator
\begin{equation} \label{frac ope}
{\bb L}:= - \frac{1}{\sqrt 2} \left((-\Delta)^{3/4}\, - \,  \nabla (- \Delta)^{1/4}\right).
\end{equation}
If $b>1$ and $a=3/2$ then
\begin{equation}
\label{eq:theo2}
\lim_{n \to + \infty} \sigma_t^n (f,h) = \frac{2}{\beta^2} \iint_{\R^2} {d}u {d}v\,  f(u)h(v) P_t(u-v) .
\end{equation}
\end{theorem}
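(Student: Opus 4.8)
The plan is to adapt the method of \cite{BGJ}, developed for the pure exchange dynamics ($\gamma_n=0$), and to show that the flip noise is a perturbation that disappears in the limit precisely when $b>1$. Starting from the reduction already displayed above, $\sigma_t^n(f,h)=\tfrac1n\sum_{y,z} f(\tfrac{y+z}{n})h(\tfrac{y}{n})\,S_n(z,t)$ with $S_n(z,t):=\langle \omega_z^2(tn^a)\,;\,\omega_0^2(0)\rangle_\beta$, I would first record the two elementary facts $S_n(z,0)=2\beta^{-2}\delta_{z,0}$ (by Wick's theorem) and $\sum_{z}S_n(z,t)=2\beta^{-2}$ for all $t$ (conservation of the total energy). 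Consequently the theorem reduces to the local limit statement $n\,S_n(\lfloor nu\rfloor,t)\to 2\beta^{-2}P_t(u)$, uniformly on compact sets: substituting this and turning the two sums $\tfrac1n\sum_{y}$ and $\tfrac1n\sum_{z}$ into integrals over $v=y/n$ and $w=z/n$ produces the right-hand side of \eqref{eq:theo2}, with prefactor $2/\beta^2$ and $P_t$ normalized so that $\int_\R P_t=1$.

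Because $\mu_\beta$ is an invariant Gaussian measure and $\mc L_n$ preserves the space of polynomials of degree at most two, the function $S_n=g_0$ solves, together with the off-diagonal two-point correlations $g_r(t,x):=\langle \omega_x(tn^a)\omega_{x+r}(tn^a)\,;\,\omega_0^2(0)\rangle_\beta$, $r\neq0$, a closed linear hierarchy. A direct computation of $\mc L_n$ on the monomials $\omega_x^2$ and $\omega_x\omega_{x+r}$ shows that $\mc A$ couples neighbouring separations $r$ and $r\pm1$ (in particular the energy $r=0$ to $r=\pm1$) through a discrete gradient, that $\mc S^{\mathrm{exch}}$ acts as a discrete Laplacian, and crucially that $\gamma_n\mc S^{\mathrm{flip}}$ damps every off-diagonal $g_r$ ($r\neq0$) at a rate proportional to $\gamma_n$ while leaving $g_0$ untouched. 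This is the microscopic origin of the crossover: the superdiffusion of the energy is carried by the slowly relaxing, volume-type off-diagonal correlations, and the flip noise opens an additional relaxation channel for them.

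I would then Fourier transform in the macroscopic space variable and Laplace transform in time, and eliminate the off-diagonal components $g_r$, $r\neq0$, to obtain an effective scalar symbol $\Sigma_n(k)$ for $g_0$, exactly as in \cite{BGJ}. The flip noise enters only by adding a constant $\sim\gamma_n$ to the off-diagonal relaxation, which moves the crossover of the effective energy symbol from its superdiffusive form $\sim|k|^{3/2}$ to the normal form $\sim k^2$ at the wavenumber $k_c\sim\gamma_n\sim c\,n^{-b}$ (consistency with Theorem \ref{th:1}, whose microscopic diffusivity scales like $\gamma_n^{-1/2}$, fixes this crossover). Under the scaling $a=3/2$ the relevant modes are $k=\kappa/n$ with $\kappa$ fixed, and the condition $b>1$ is precisely $k_c\sim n^{-b}\ll n^{-1}$, i.e. it places all macroscopic modes strictly above the flip cutoff. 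I then expect $n^{3/2}\,\Sigma_n(\kappa/n)$ to converge to the symbol of $-\bb L$, namely $\tfrac{1}{\sqrt2}|\kappa|^{3/2}\bigl(1-i\,\mathrm{sgn}(\kappa)\bigr)$: the real part $|\kappa|^{3/2}$ is the $(-\Delta)^{3/4}$ contribution produced by the exchange-mediated transport, while the odd imaginary part, generated by the asymmetry of Newton's equations $\omega_{x+1}-\omega_{x-1}$, yields the skew term $\nabla(-\Delta)^{1/4}$.

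The hard part will be to make this last convergence quantitative: one has to bound the flip perturbation of $\Sigma_n$ uniformly in a full neighbourhood of $\kappa=0$ and in the tail, controlling at once the singular $k\to0$ behaviour responsible for the superdiffusion and the infinite off-diagonal hierarchy that has been eliminated. Once $n^{3/2}\Sigma_n(\kappa/n)$ is shown to converge to the symbol of $-\bb L$ with an error integrable against $e^{-\mu t}$, resolvent convergence $(\mu-\bb L_n)^{-1}\to(\mu-\bb L)^{-1}$ (where $\bb L_n$ denotes the rescaled generator of $g_0$) upgrades to convergence of the associated semigroups. This gives $n\,S_n(\lfloor nu\rfloor,t)\to 2\beta^{-2}P_t(u)$ and hence \eqref{eq:theo2}.
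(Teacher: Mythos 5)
Your blueprint identifies the right mechanism, and it is in essence the paper's own (both adapt \cite{BGJ}): the closed degree-two hierarchy, the fact that the flip noise damps the off-diagonal correlations at rate $\sim\gamma_n$ while leaving the diagonal untouched, the elimination of the off-diagonal part to produce the effective symbol $\tfrac{1}{\sqrt 2}|\kappa|^{3/2}(1-i\,\mathrm{sgn}\,\kappa)$, and the crossover heuristic $k_c\sim\gamma_n\ll n^{-1}$ giving $b>1$ all match what the paper does. Where you differ is the packaging: the paper works with the tested fields $\mc S^n_t$ and $Q^n_t$ and their coupled equations (Proposition \ref{prop:equadiff}), cancels the singular coupling by solving the Poisson equations \eqref{ec4.22} and \eqref{eq:poisson-v} for correctors $h_n$ and $v_n$, and concludes by tightness plus uniqueness of the limiting weak solution via \cite{J}; you propose instead a Fourier--Laplace elimination, convergence of an effective symbol, and a Trotter--Kato/resolvent argument leading to a pointwise local limit theorem for $nS_n(\lfloor nu\rfloor,t)$.

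The genuine gap is in your final step. After eliminating $g_r$, $r\neq 0$, the diagonal $g_0$ does \emph{not} evolve autonomously at fixed $n$: the Laplace-transform elimination produces a $\mu$-dependent kernel $\Sigma_n(k,\mu)$, i.e.\ a memory equation, not a Markovian generator, so the object ``$\bb L_n$, the rescaled generator of $g_0$'' to which you want to apply resolvent-to-semigroup convergence does not exist. Proving that this memory is asymptotically negligible (equivalently, that $\Sigma_n$ may be replaced by its instantaneous value) is precisely the technical content of the paper's proof: it is the role of the two correctors $h_n$ and $v_n$ together with the a priori bound \eqref{ec3.6}, and the estimates of Lemma \ref{l3} and Lemma \ref{lem:5,7.}. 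Note in particular that the diagonal boundary terms in \eqref{ecA.8} --- which you gloss over by describing $\mc S^{\rm exch}$ as ``a discrete Laplacian'' --- are exactly what generates the term $\tilde{\mc D}_n h_n$ of order one, forcing the second corrector $v_n$; this step cannot be skipped. Likewise, the quantitative convergence you defer as ``the hard part'' is the paper's Lemma \ref{lem:G0345}, $|G_n(y)-G_0(y)|\le C\bigl[\sin^2(\pi y)+\gamma_n^2|\sin(\pi y)|^{-1/2}+\gamma_n|\sin(\pi y)|^{1/2}\bigr]$, whose integration against $|\mc F_n(f)(ny)|^2$ yields the conditions $\gamma_n^2n^2\to 0$ and $\gamma_n^4n^4\to 0$, i.e.\ exactly $b>1$; your crossover heuristic predicts the threshold but does not prove it. Finally, a smaller repair: your reduction to a pointwise local limit theorem is a strictly stronger statement than \eqref{eq:theo2} and requires, besides mass conservation, the positivity of $S_n(\cdot,t)$ (true, because the dynamics is a random orthogonal flow, so $S_n(z,t)=2\beta^{-2}\,\bb E[A_{z0}^2(t)]$), which you never establish; the paper avoids this issue altogether by proving only the tested (weak) convergence.
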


As for Theorem \ref{th:1}, with a little more effort we could also prove that the energy fluctuation field ${\mc E}_t^n$ converges to an infinite dimensional 3/4-fractional Ornstein-Uhlenbeck (fOU) process.

The case $b\in [2/3,1]$ remains open. The conjecture is not easy to guess. One possible behavior is the following: $b=1$  would be a field interpolating the standard OU process and the fOU process, and $b\in [2/3,1)$ would correspond to the same diffusive behavior as for $b \in [0,2/3)$.    In any cases, it is easy, by scaling considerations, to see that the limiting energy field ${\mc E}(t,x \, ; \, c,b):= \lim_{n \to \infty} {\mc E}_t^n$  in the time scale $t n^a$ shall satisfy the scaling relation (in law)
\begin{equation}
{\mc E} (t,x\, ; \, c,b) = \ve^{1/2} {\mc E} ( t \ve^a, \ve x \, ; \, c \ve^{-b}, b), \quad \ve>0.
\end{equation}

The only result we are able to prove in the window $b\in[2/3 ,1]$ is that the time scale necessary to see some macroscopic evolution of the energy is at least $t n^{4/3}$. The proof is the same as in \cite{BG} and we refer the interested reader to that paper.

We turn now to the volume fluctuation field. The space-time correlation function of the volume fluctuation field in the time-scale $tn^a$ is defined for $f,h\in\mc{S}(\bb R)$ as
\begin{equation}\label{st corre of vol}
\begin{split}
\eta_t^n(f,h)&:=\tfrac{1}{n}\sum_{x,y\in \ZZ} f\big(\tfrac{x}{n}\big)h\big(\tfrac{y}{n}\big) \left\langle \omega_x(tn^a)\omega_y(0)\right\rangle_\beta\\
& = \tfrac{1}{n}\sum_{z,y\in \ZZ} f\big(\tfrac{y+z}{n}\big)h\big(\tfrac{y}{n}\big) \left\langle \omega_z(tn^a)\omega_0(0)\right\rangle_\beta.
\end{split}
\end{equation}

We obtain complete results, in the sense that, all time scales $a$ and all exponents $b$ are covered. The behavior of the volume fluctuations is of three different types: \begin{itemize}
\item \emph{Relaxation}: this means that $\eta_t^n$ converges to $\eta_t$ which is solution of \[\partial_t \eta_t(f,h)=-C\eta_t(f,h),\] for some constant $C>0$.
\item \emph{Transport}: this means that $\eta_t^n$ converges to $\eta_t$ which is solution of \[ \partial_t\eta_t(f,h)=C\eta_t(f',h),\] for some constant $C >0$.
\item \emph{Heat}: this means that $\eta_t^n$ converges to $\eta_t$ which is solution of \[ \partial_t\eta_t(f,h)=C\eta_t(f'',h),\] for some constant $C>0$.
\end{itemize}

More precisely let us state the theorem. For any parameter $b>0$, we are going to see that the hyperbolic time scale $a=1$ yields to a transport equation with speed constantly equal to 2. In other words, the limit volume fluctuation field at time $t$ is a translation of the initial one. As a consequence, in higher time scales, the fluctuation field should be redefined in order to take into account this transport phenomenon.  When the time scale $a$ satisfies $a> 1$, we redefine the  space-time correlation function of the volume fluctuation field  as

\[\widetilde\eta_t^n(f,h)=\tfrac{1}{n}\sum_{x,y\in \ZZ} f\big(\tfrac{x-2tn^a}{n}\big)h\big(\tfrac{y}{n}\big) \left\langle \omega_x(tn^a)\omega_y(0)\right\rangle_\beta\]
Let us remark that the two fluctuation fields initially coincide: $\eta_0^n=\widetilde\eta_0^n$.

\begin{theorem} \label{theo:volume} The behavior of the volume fluctuation field depends on the time scale in the following way: \begin{enumerate}[A)]

\item \textbf{Case} $b\leqslant 1$.

\begin{enumerate}[(i)]
\item If $a<b$, then $\partial_t \eta_t^n(f,h)$ vanishes, and \[\lim_{n\to\infty} \eta_t^n(f,h)= \eta_0(f,h)=\beta^{-1} \iint_{\R^2} dudv\, f(u)h(v).\]

\item If $a=b$, then
\[ \lim_{n\to\infty} \eta_t^n(f,h)=\iint_{\R^2} dudv\, f(u)h(v)P_t(u-v),\] where $\{P_t\}_{t\geqslant 0}$ is the semigroup generated by the infinitesimal generator 
\[\mathbf{1}_{b=1} \times 2\nabla - 2c {\rm Id}.\]

\item If $a>b$,  $ \lim_{n\to\infty} \eta_t^n(f,h)=0.$
\end{enumerate}

\item \textbf{Case} $1<b<2$.

 \begin{enumerate}[(i)]
\item If $a<b$ and $a\in (0,1)$, then $\partial_t \eta_t^n(f,h)$ vanishes, and \[\lim_{n\to\infty} \eta_t^n(f,h)= \eta_0(f,h)=\beta^{-1} \iint_{\R^2} dudv\, f(u)h(v).\]

\item If $a<b$ and $a\in (1,2)$, then $\partial_t \widetilde\eta_t^n(f,h)$ vanishes, and \[\lim_{n\to\infty} \widetilde\eta_t^n(f,h)= \eta_0(f,h)=\beta^{-1} \iint_{\R^2} dudv\, f(u)h(v).\]

\item If $a=1$,\[ \lim_{n\to\infty} \eta_t^n(f,h)=\iint_{\R^2} dudv\, f(u)h(v)P_t(u-v),\] where $\{P_t\}_{t\geqslant 0}$ is  the semigroup generated by the infinitesimal generator $2\nabla.$

\item If $a=b$, then $\widetilde\eta_t^n$ converges to $\eta_t$ where $\eta_t$ is the solution of
\[ \left\{\begin{aligned} \partial_t \eta_t(f,h)&=-2c\eta_t(f,h), \\
\eta_0(f,h)&=\beta^{-1} \iint_{\R^2} dudv\, f(u)h(v).\end{aligned} \right.\]

\item If $a>b$, $ \lim_{n\to\infty} \eta_t^n(f,h)=0.$
\end{enumerate}

\item \textbf{Case} $b\geqslant 2$.

 \begin{enumerate}[(i)]
\item If $a \in (0,1)$,   then $\partial_t \eta_t^n(f,h)$  vanishes, and \[\lim_{n\to\infty} \eta_t^n(f,h)= \eta_0(f,h)=\beta^{-1} \iint_{\R^2} dudv\, f(u)h(v).\]

\item If $a \in (1,2)$,   then $\partial_t \widetilde\eta_t^n(f,h)$  vanishes, and \[\lim_{n\to\infty} \widetilde\eta_t^n(f,h)= \eta_0(f,h)=\beta^{-1} \iint_{\R^2} dudv\, f(u)h(v).\]

\item If $a=1$, \[ \lim_{n\to\infty} \eta_t^n(f,h)=\iint_{\R^2} dudv\, f(u)h(v)P_t(u-v),\] where $\{P_t\}_{t\geqslant 0}$ is  the semigroup generated by the infinitesimal generator $2\nabla.$

\item If $a=2$, \[ \lim_{n\to\infty} \widetilde\eta_t^n(f,h)=\iint_{\R^2} dudv\, f(u)h(v)P_t(u-v),\] where $\{P_t\}_{t\geqslant 0}$ is  the semigroup generated by the infinitesimal generator \[\lambda \Delta- \mathbf{1}_{b=2} \times 2c {\rm Id}.\]

\item If $a>2$, $ \lim_{n\to\infty} \eta_t^n(f,h)=0.$
\end{enumerate}

\end{enumerate}

\end{theorem}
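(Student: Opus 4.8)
The plan is to exploit the fact that the volume fluctuation field is a \emph{linear} observable and that the generator $\mc L_n$ preserves the space of linear functionals of $\omega$. First I would compute $\mc L_n\omega_z$ explicitly. Since $\mc A\omega_z=\omega_{z+1}-\omega_{z-1}$, $\,{\mc S}^{\text{exch}}\omega_z=\omega_{z+1}+\omega_{z-1}-2\omega_z$ and $\,{\mc S}^{\text{flip}}\omega_z=-2\omega_z$, with $\lambda=1$ the antisymmetric current and the discrete Laplacian combine into a pure drift,
\[ \mc L_n\omega_z=2\big(\omega_{z+1}-\omega_z\big)-2\gamma_n\,\omega_z. \]
Consequently, writing $\psi_z(s):=\langle\omega_z(s)\,\omega_0(0)\rangle_\beta$, differentiating in $s$ and using the backward Kolmogorov equation together with the translation invariance of $\mu_\beta$, the two-point correlation obeys the \emph{closed} linear system
\[ \partial_s\psi_z(s)=2\big(\psi_{z+1}(s)-\psi_z(s)\big)-2\gamma_n\psi_z(s),\qquad \psi_z(0)=\beta^{-1}\delta_{z,0}. \]
This closure—no coupling to higher moments—is exactly what makes the volume field completely solvable, in contrast with the energy field of Theorems \ref{th:1}--\ref{theo2}, which involves $\omega_x^2$ and does not close.

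Next I would solve this system explicitly. The discrete Fourier transform $\hat\psi(\theta,s)=\sum_z\psi_z(s)e^{i\theta z}$ gives $\hat\psi(\theta,s)=\beta^{-1}\exp\{2s(e^{-i\theta}-1)-2\gamma_n s\}$, hence
\[ \psi_z(s)=\beta^{-1}\,e^{-2\gamma_n s}\,p_s(z),\qquad p_s(z)=e^{-2s}\frac{(2s)^{|z|}}{|z|!}\,\mathbf 1_{z\leqslant 0}, \]
so that $p_s$ is the law of a rate-$2$ Poisson walk (here $s=tn^a$). Substituting into the definition of $\eta_t^n$ (or $\widetilde\eta_t^n$) reduces the whole theorem to the asymptotics of the Riemann sum $\tfrac1n\sum_{z,y}f(\tfrac{y+z}{n})h(\tfrac yn)\psi_z(tn^a)$. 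Three competing scales then govern the limit: the exponential damping $e^{-2\gamma_n tn^a}=e^{-2ct\,n^{a-b}}$, the ballistic displacement of order $t n^{a-1}$ in macroscopic units (the mean of $p_{tn^a}$ divided by $n$), and the diffusive spread of order $\sqrt t\,n^{a/2-1}$. The thresholds $a=b$, $a=1$ and $a=2$ at which these become of order one organize the entire case analysis, and the reason one must pass to the shifted field $\widetilde\eta_t^n$ precisely when $a>1$ is to follow the moving frame created by the ballistic displacement.

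Finally I would carry out the case-by-case passage to the limit. When the damping scale is smallest ($a>b$) the prefactor $e^{-2ct\,n^{a-b}}$ forces the limit to vanish; when the diffusive spread dominates ($a>2$) the mass of $\psi$ is smeared over a macroscopic scale and, by the decay of $f,h\in{\mc S}(\R)$, the field again vanishes. At the critical scales a local central limit theorem for $p_{tn^a}$ identifies the limit: at $a=1$ the spread is negligible and one obtains pure transport at speed $2$ (semigroup $2\nabla$); at $a=2$ the Poisson law converges, after recentering, to a Gaussian and one obtains the heat semigroup $\lambda\Delta$; at $a=b<2$ the spread is still negligible but the damping survives, yielding pure relaxation with rate $2c$; and the indicators $\mathbf 1_{b=1}$, $\mathbf 1_{b=2}$ record the coincidences $a=b=1$ and $a=b=2$, where transport (resp. heat) and relaxation occur simultaneously. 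For $a$ below all thresholds the field is frozen at its initial value. The main difficulty is not conceptual but lies in making these limits uniform: one needs a local CLT estimate for $p_{tn^a}(z)$ together with Gaussian/Poisson tail bounds, sharp enough to interchange the limit with the infinite sums over $z,y$ and convert them into Riemann integrals, with error control that is uniform exactly at the critical exponents $a\in\{1,2,b\}$ where two of the three scales balance.
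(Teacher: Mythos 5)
Your proposal is correct, and its first half is exactly what the paper does in Section \ref{sec:volume}: there too, the key observation is that the two-point function $\mathcal{V}(z,t)=\langle \omega_z(t)\omega_0(0)\rangle_\beta$ satisfies a \emph{closed} linear system (your computation of $\mc L_n\omega_z$ is the paper's ODE with $\lambda=1$), which is solved by discrete Fourier transform, giving $\widehat{\cv}(\theta,t)=\beta^{-1}\exp\big(t[-2i\sin(2\pi\theta)-2\gamma_n-4\lambda\sin^2(\pi\theta)]\big)$; for $\lambda=1$ this is precisely your $\beta^{-1}\exp\{2s(e^{-2i\pi\theta}-1)-2\gamma_n s\}$. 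The genuine difference is the finish. The paper never inverts the transform: it writes $\eta_t^n$, $\widetilde\eta_t^n$ and their time derivatives as Fourier integrals of $\widehat\cv(\tfrac{\xi}{n},tn^a)\,\cF_n(f)\,\cF_n(h)$ against the multipliers $q_n$, $\widetilde q_n$ of \eqref{qn}, and reduces every case of the theorem to convergence of these multipliers to the symbol of the limiting generator ($-4i\pi\xi$, $-4\pi^2\lambda\xi^2$, $-2c$, and their combinations), the uniformity coming from Lemma \ref{lem:sfp}, Corollary \ref{cor:fourier} and the a priori bound \eqref{eq:bound}. You instead invert to the explicit damped Poisson representation $\psi_z(s)=\beta^{-1}e^{-2\gamma_n s}p_s(z)$ and identify the limits in physical space through concentration and a local CLT for $p_{tn^a}$. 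Both finishes work. Yours makes the three competing scales (damping, ballistic mean, diffusive spread) and the need for a co-moving frame when $a>1$ completely transparent, and the relaxation/transport/heat trichotomy drops out of standard Poisson asymptotics; the cost is precisely the uniform local CLT and tail estimates you flag at the end, which the paper's route avoids entirely --- dominated convergence of explicit Fourier integrands is softer than a local CLT. Note also that the probabilistic reading of the kernel (nonnegative jump rates) is special to $\lambda=1$ (more generally $\lambda\geqslant 1$), whereas the multiplier argument is indifferent to the value of $\lambda>0$.

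One point you must pin down before running the case analysis: orientation. Your $p_s$ is the law of \emph{minus} a Poisson$(2s)$ variable, so the correlation peak sits at $z=x-y\approx -2tn^a$, i.e.\ the co-moving frame is $x\approx y-2tn^a$. The recentred field in the statement shifts $f$ by $+2tn^a$; substituting your representation literally into that formula would place $f$ at distance $\approx 4tn^{a-1}$ from the peak and would make $\widetilde\eta_t^n$ vanish for every $a>1$, contradicting cases B(ii), B(iv), C(ii), C(iv). In the paper's Fourier bookkeeping this mismatch is invisible, because the shift only enters through the phase $e^{4i\pi tn^a\xi/n}$ that is declared to cancel the transport part of $\widehat\cv$, and because $\cF_n(f)(\xi)$ and $\cF_n(f)(-\xi)$ are conflated for real $f$; in your real-space approach the sign is explicit and cannot be fudged. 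So you must fix one consistent convention --- the sense of the speed-$2$ transport, the sign of the shift in $\widetilde\eta_t^n$, and the kernel $P_t$ generated by $2\nabla$ --- and then your scheme goes through; this is a bookkeeping issue (arguably inherited from the statement itself), not a flaw in your method.
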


We resume in Figure \ref{fig:energy} and Figure \ref{fig:volume} all the results of this paper.

\begin{figure}
\begin{center}
\begin{tikzpicture}[scale=0.3]
\draw (0,23) node[left]{$a$};
\draw (23,0) node[below]{$b$};
\draw (10,0) node[below]{$1$};
\draw (6.666,0) node[below]{$\frac{2}{3}$};
\draw (0,0) node[left]{$0$};
\draw (0,16.666) node[left]{$\frac{5}{3}$};
\draw (0,13.333) node[left]{$\frac{4}{3}$};
\draw (0,15) node[left]{$\frac{3}{2}$};
\draw (0,20) node[left]{$2$};
\draw[-,=latex,red,ultra thick] (0,20) -- (6.666, 16.666) node[midway,above,sloped] {Heat eq.};
\draw[-,=latex,red,ultra thick] (10,15) -- (20,15) node[midway,above,sloped] {Fract. heat eq.};
\fill[light-gray] (0,0) -- (0,20) --(6.666,16.666) -- (6.666,13.333) -- (10,13.33) -- (10,15) -- (20,15) -- (20,0) -- cycle;
\draw (8,6.67) node{{\textit{No evolution}}};
\draw[blue, ultra thick] (6.666,16.666) -- (6.666,13.333);
\draw[blue, ultra thick] (6.666,13.333) -- (10,13.333);
\draw[blue, ultra thick] (10,13.333) -- (10,15);
\draw[-,=latex, dashed] (-0.1,16.666) -- (6.666,16.666);
\draw[-,=latex, dashed] (-0.1,13.333) -- (20,13.333);
\draw[-,=latex, dashed] (-0.1,15) -- (20,15);
\draw[-,=latex, dashed] (6.666,-0.1) -- (6.666,16.666);
\draw[-,=latex, dashed] (10,-0.1) -- (10,15);
\draw (8.2,14.2) node{{\color{blue}\textbf{?}}};
\draw[->,>=latex] (0,0) -- (24,0);

\draw[->,>=latex] (0,0) -- (0,24);
\end{tikzpicture}
\end{center}
\caption{Energy fluctuations}
\label{fig:energy}
\end{figure}
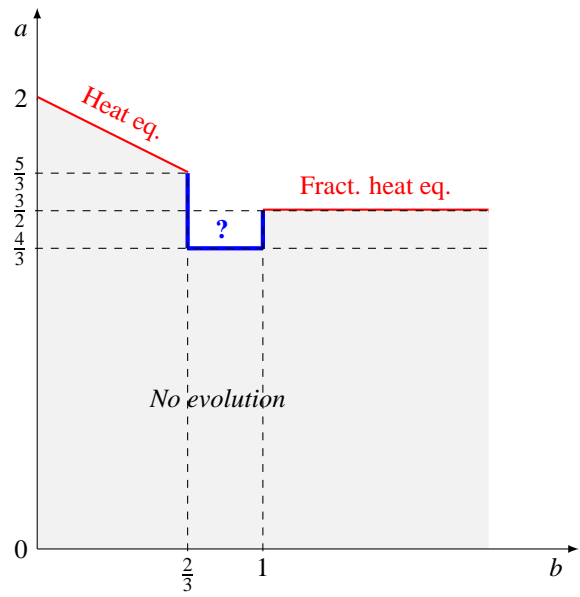

\begin{figure}
\begin{center}
\begin{tikzpicture}[scale=0.3]
\draw (0,25) node[left]{$a$};
\draw (25,0) node[below]{$b$};
\draw (10,0) node[below]{$1$};
\draw (20,0) node[below]{$2$};
\draw (0,0) node[left]{$0$};
\draw (0,10) node[left]{$1$};
\draw (0,20) node[left]{$2$};
\fill[light-gray] (0,0) -- (20,20) -- (25,20) -- (25,25) -- (0,25) -- cycle;
\fill[fill=blue, fill opacity=0.1] (0,0) -- (25,0) -- (25,20) -- (20,20) -- cycle;
\draw[-,=latex,red,ultra thick] (20,20) -- (25, 20) node[midway,below,sloped] {Heat eq.};
\draw[-,=latex,red,ultra thick] (10,10) -- (25,10) node[midway,below,sloped] {Transport eq.};
\draw[-,=latex,blue, dashed, ultra thick] (0,0) -- (10,10) node[midway,above,sloped] {Relaxation};
\draw[-,=latex,blue,dashed, ultra thick] (10,10) -- (20,20) node[midway,above,sloped] {Relaxation};
\node[circle,fill=black,inner sep=0.8mm] at (10,10) {};
\node[] at (10,10) [above] {\bf Relax. + Transport};
\node[circle,fill=black,inner sep=0.8mm] at (20,20) {};
\node[] at (20,20) [above] {\bf Relax. + Heat};
\node[] at (6,15) {\it Vanish};
\node[] at (15,5) {\it No evolution};
\node[] at (21.5,15) {\it No evolution};
\draw[-,=latex, dashed] (-0.1,10) -- (10,10);
\draw[-,=latex, dashed] (-0.1,20) -- (20,20);
\draw[-,=latex, dashed] (10,-0.1) -- (10,10);
\draw[-,=latex, dashed] (20,-0.1) -- (20,20);
\draw[->,>=latex] (0,0) -- (26,0);

\draw[->,>=latex] (0,0) -- (0,26);

\draw[thick, decorate,decoration={brace, amplitude=5pt}] (27,25) -- (27,10);

\node[] at (28,18) [right] {$\widetilde\eta_t$};

\draw[thick, decorate,decoration={brace,amplitude=5pt}] (27,10) -- (27,0);

\node[] at (28,5) [right] {$\eta_t$};

\end{tikzpicture}
\end{center}
\caption{Volume fluctuations}
\label{fig:volume}
\end{figure}
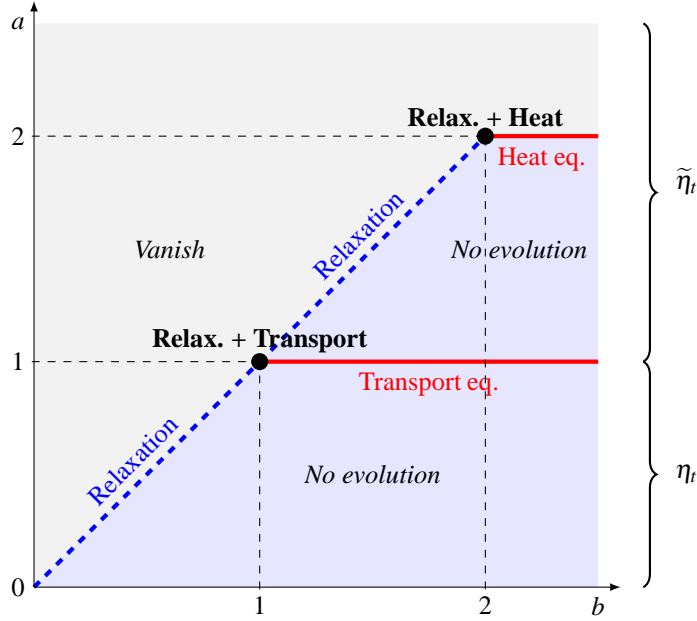

\newpage

\section{Diffusive domain for the energy}

Before  giving the proof of Theorem \ref{th:1} we recall some facts about the notion of the fluctuation-dissipation equation.  Hereafter, $\tau_x$ denotes the translated operator that acts on a function $h$ as $(\tau_x u)(\omega):=u(\tau_x \omega)$, and $\tau_x \omega$ is the configuration obtained from $\omega$ by shifting: $(\tau_x \omega)_y=\omega_{x+y}$. The microscopic current of energy is defined by the local conservation law
${\mc L}_n (\omega^2_x) =\nabla (j_{x-1,x})$, where the discrete gradient $\nabla$ is defined for a function $h:\Omega\to\R$ as \[\nabla(h)(\omega):=(\tau_{1}h)(\omega)-h(\omega).\] 
Here, the energy current is explicitly given by
\begin{equation}
\label{eq:current}
j_{x,x+1}(\omega)=j_{x,x+1}^A(\omega)+j_{x,x+1}^S(\omega)=2\omega_x \omega_{x+1}+\lambda(\omega_{x+1}^2-\omega_x^2).
\end{equation}
Let us notice that the current does not depend on $n$ (since it does not involve the intensity $\gamma_n$ of the flip noise).

If $b=0$, i.e. $\gamma_n=c$ is a constant independent of $n$, and $\lambda=0$, the volume is not conserved and we can show that the energy fluctuation field $\mc E_{t}^n$, with $a=2$, converges, as $n \to \infty$, towards an infinite dimensional Ornstein-Uhlenbeck process: the energy transport is diffusive. The argument is based on two ingredients: 1) the existence of a fluctuation-dissipation equation and 2) the Boltzmann-Gibbs principle. A {\emph{fluctuation-dissipation}} equation for the current is the decomposition of the current as a sum of a discrete gradient $\nabla f_x$ and a term in the form ${\mc L}_n (g_x)$, where $f_x$ and $g_x$ are two functions of the configuration. The \emph{Boltzmann-Gibbs principle} was first introduced by Brox and Rost \cite{BR} and, roughly speaking, states that the space-time fluctuations of any non-conserved field can be written as a linear functional of the conserved field plus a small fluctuating term. These two properties are sufficient to close the evolution equation of the energy fluctuation field (see \cite{KL} for more details).

In the case $b=\lambda=0$, a straightforward computation shows that \[ j_{x,x+1}=\nabla(\omega_x^2+\omega_{x-1}\omega_{x+1})-\cL_n (\omega_x\omega_{x+1}),\] where $\nabla$ is the discrete gradient. Thus an exact fluctuation-dissipation equation involving local functions holds{\footnote{The situation is exactly similar to the velocity-flip model investigated  in \cite{Sim13} and the results of \cite{Sim13} can be adapted to this model mutatis mutandis.}}. If $b=0$ but $\lambda>0$ then a fluctuation-dissipation equation still holds, but the functions involved in the decomposition are no longer local. However, they are exponentially localized so that this does not really matter and a normal diffusion of energy takes place (see e.g.~\cite{BasOlla}). On the other hand, if $b=\infty$ (or equivalently $c=0$) and $\lambda>0$, the situation is very different since energy superdiffuses (\cite{BGJ}). In this case, it is not clear if such a fluctuation-dissipation equation can be obtained. But if it exists it should involve very non-local functions.

 In this section we are interested in the energy transport in the limit $\gamma\to 0$. More exactly, we assume that $\gamma=\frac{c}{n^b}$ with $n \to \infty$. It turns out that if $b<\infty$, an exact fluctuation-dissipation equation still holds with exponentially localized functions. Nevertheless the length scale of the localization is typically of order $\gamma_n^{-1}$ which goes to $\infty$, as $n \to \infty$. The ``standard technique" \cite{KL} to obtain convergence of the fluctuation field to an Ornstein-Uhlenbeck process has to be adapted with care and, in fact, we are only able to do it for $b<2/3$.

%
%
%
%
%
%
%
%

\subsection{The fluctuation-dissipation equation}

\paragraph{\sc Strategy} In the sequel, we erase the dependence on the parameters $n,b$ and $c$ whenever no confusion arises. We consider a function $u$ in the form
\begin{equation}
\label{eq:uu}
u = \sum_{x\in\mathbb{Z}} \sum_{k\geqslant1} \rho_k(x) \omega_x \omega_{x+k},
\end{equation}
where $\{\rho_k(x)\, ; \, x \in \Z, k \geqslant 1\}$ is a real sequence that satisfies the condition
\begin{equation}
  \label{eq:normalization_condition_u}
  \sum_{x\in\mathbb{Z}} \sum_{k\geqslant1} |\rho_k(x)|^2 < +\infty,
\end{equation}so that  $u$ is a function in $\mathbf L^2(\mu_\beta)$. Observe first that ${\mc A}u$ is a sum of gradient terms. Indeed, we have
\begin{equation}\label{eqAu}
{\mc A}u =  \sum_{x\in\mathbb{Z}} \sum_{k\geqslant1} \rho_k(x)\,  \nabla \left[ \omega_{x-1} \omega_{x+k} + \omega_x \omega_{x+k-1}\right].
\end{equation}
 Our aim is now to solve the equation
\begin{equation} \label{eq:aim}
(\gamma_n{\mc S}^{\rm flip} + \lambda {\mc S}^{\rm exch} ) u = 2 \omega_0 \omega_1=j_{0,1}^A (\omega).
\end{equation}
Then it will follow that
\begin{equation}
\label{eq:FD}
{\mc L}_n u - v = j_{0,1}^A
\end{equation}
where $v={\mc A} u$ given in \eqref{eqAu}
is a sum of gradient functions. One can easily check that $v$ and ${\mc L}_n u$ are also in the space $\mathbf{L}^2(\mu_\beta)$.

\paragraph{\sc Resolution} Straightforward computations show  that
\begin{equation*}
(\gamma_n{\mc S}^{\rm flip} + \lambda {\mc S}^{\rm exch} ) u = \sum_{x \in \ZZ} \sum_{k\geqslant 1} F_{k} (x) \omega_x \omega_{x+k},
\end{equation*}
with, for $x\in\ZZ$,
\begin{equation*}
\begin{split}
&F_{1} (x) = -2 (2\gamma_n +\lambda) \rho_1 (x) + \lambda (\rho_2 (x) + \rho_2 (x-1)),\\
&F_{k} (x) = -4 (\gamma_n + \lambda) \rho_{k} (x)
+ \lambda \Big(\rho_{k-1} (x) + \rho_{k-1} (x+1) + \rho_{k+1} (x) + \rho_{k+1} (x-1)\Big), \quad k\geqslant 2.
\end{split}
\end{equation*}
For the sake of clarity, we forget  that the coefficients $F_k(x)$ should depend on $n$. Identifying the coefficients in front of the different terms, it follows that \eqref{eq:aim} will hold if, for all $x \in \ZZ$,
\begin{equation}
\label{eq:64}
F_{k} (x) = 2 {\bf 1}_{\{k=1, x=0\}}.
\end{equation}
In \eqref{fou tranf}  we introduce the Fourier transform $\widehat{h} \in \mathbf{L}^2(\TT)$
of a given function $h \in \ell^2(\ZZ,\RR)$.
%
Then, the condition \eqref{eq:64} can be equivalently reformulated for $\theta \in \TT$ as
\begin{equation}
\label{eq:65}
\begin{cases}
-2(2\gamma_n+\lambda) \widehat{\rho}_1 (\theta) + \lambda (1 +e^{2 i \pi \theta})
\widehat{\rho}_2(\theta) = 2,\cr
-4 (\gamma_n+\lambda) \widehat{\rho}_{k}(\theta) + \lambda (1 +e^{-2 i \pi \theta})
\widehat{\rho}_{k-1}(\theta) + \lambda (1 +e^{2 i \pi \theta})
\widehat{\rho}_{k+1}(\theta) = 0, \quad k \geqslant 2.
\end{cases}
\end{equation}
By Parseval's identity, condition \eqref{eq:normalization_condition_u} is equivalent to
\[
\sum_{k \geqslant 1} \int_{\TT} | \widehat{\rho}_k(\theta)|^2 \, d\theta < +\infty.
\]
Then, one can easily show that \eqref{eq:65} and the above integrability condition leads to
\[\widehat{\rho}_k(\theta)= \widehat{\rho}_1(\theta)(X(\theta))^{k-1},\]
with
\[
X(\theta) = \frac{2}{1+e^{2i\pi \theta}} \left\{ 1+\tfrac{\gamma_n}{\lambda} -\sqrt{(1+\tfrac{\gamma_n}{\lambda})^2 -\cos^2 (\pi \theta)}\right\}
\]
and
\begin{equation*}
{\widehat \rho}_1 (\theta) = -\cfrac{1}{\gamma_n +\lambda\sqrt{(1+\tfrac{\gamma_n}{\lambda})^2 -\cos^2 (\pi \theta)}}.
\end{equation*}
\paragraph{\sc Sharp estimates} In the following, we will need sharp estimates on $\widehat \rho_1$ and $X$, precisely:
\begin{lemma} \label{lem:estimates} For $\theta \in \TT$ and for sufficiently large $n$, we have
\[
| X(\theta) |  \leqslant \frac{|\cos(\pi \theta)|}{1+\sqrt{\frac{\gamma_n}{\lambda}}}\quad \text{ and } \quad
|\widehat{\rho}_1(\theta) |  \leqslant \frac{1}{\lambda\sqrt{\frac{\gamma_n}{\lambda}+\sin^2(\pi\theta)}}.
\]
\end{lemma}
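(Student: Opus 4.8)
The plan is to reduce both inequalities to elementary comparisons of square roots after simplifying the complex prefactor appearing in $X$. Throughout I abbreviate $g := \gamma_n/\lambda$, which is positive and tends to $0$ as $n\to\infty$.

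First I would treat $X$. The crucial observation is that $1+e^{2i\pi\theta} = 2\cos(\pi\theta)\,e^{i\pi\theta}$, so that $|1+e^{2i\pi\theta}| = 2|\cos(\pi\theta)|$. Moreover the term inside the braces, namely $1+g-\sqrt{(1+g)^2-\cos^2(\pi\theta)}$, is real and nonnegative, since $\cos^2(\pi\theta)\geq 0$ forces the square root to be at most $1+g$. Hence
\[|X(\theta)| = \frac{1+g-\sqrt{(1+g)^2-\cos^2(\pi\theta)}}{|\cos(\pi\theta)|}.\]
Rationalizing the numerator, i.e.\ multiplying top and bottom by $1+g+\sqrt{(1+g)^2-\cos^2(\pi\theta)}$, collapses the numerator to $\cos^2(\pi\theta)$ and yields
\[|X(\theta)| = \frac{|\cos(\pi\theta)|}{1+g+\sqrt{(1+g)^2-\cos^2(\pi\theta)}}.\]
It then remains only to bound the denominator from below by $1+\sqrt{g}$.

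For this I would use the identity $(1+g)^2-\cos^2(\pi\theta) = \sin^2(\pi\theta)+2g+g^2$, which shows $\sqrt{(1+g)^2-\cos^2(\pi\theta)} \geq \sqrt{2g+g^2}\geq\sqrt{g}$; adding the nonnegative term $g$ gives $1+g+\sqrt{(1+g)^2-\cos^2(\pi\theta)}\geq 1+\sqrt{g}$, which is exactly the first claimed estimate. The bound on $\widehat\rho_1$ is even more direct: writing $\gamma_n=\lambda g$ gives $|\widehat\rho_1(\theta)| = \big(\lambda(g+\sqrt{(1+g)^2-\cos^2(\pi\theta)})\big)^{-1}$, and the same identity together with $2g+g^2\geq g$ yields $\sqrt{(1+g)^2-\cos^2(\pi\theta)}\geq\sqrt{\sin^2(\pi\theta)+g}$, so that the denominator dominates $\lambda\sqrt{g+\sin^2(\pi\theta)}$ and the second estimate follows.

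I do not expect a genuine obstacle here: the statement is an exact algebraic simplification followed by monotonicity of the square root, and in fact both inequalities hold for every $n$ with $\gamma_n>0$ rather than only for large $n$. The only points requiring care are the correct evaluation of $|1+e^{2i\pi\theta}|$ and checking the sign of the bracketed term so that the modulus can be removed before rationalizing. The restriction to large $n$ in the statement is merely a convenience: it guarantees that $g$ is small, and in particular that $|X(\theta)|<1$, which is what is used when summing the geometric series $\widehat\rho_k=\widehat\rho_1\,X^{k-1}$ in the sequel.
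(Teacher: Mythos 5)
Your proof is correct. It follows the paper's general strategy (direct algebraic estimation of the explicit formulas) but its key step is genuinely different. After the common reduction $|X(\theta)|=|\cos(\pi\theta)|^{-1}\bigl\{1+C-\sqrt{(1+C)^2-\cos^2(\pi\theta)}\bigr\}$ with $C=\gamma_n/\lambda$, the paper assumes $n$ large so that $C<1$, uses $(1+C)^2\leqslant(1+C)(1+\sqrt{C})$, and then invokes the inequality $1-x\leqslant\sqrt{1-x}$ on $[0,1]$ to bound the brace by $\cos^2(\pi\theta)/(1+\sqrt{C})$. You instead multiply by the conjugate to get the exact identity
\[
|X(\theta)|=\frac{|\cos(\pi\theta)|}{1+C+\sqrt{(1+C)^2-\cos^2(\pi\theta)}},
\]
and then lower-bound the denominator using $(1+C)^2-\cos^2(\pi\theta)=\sin^2(\pi\theta)+2C+C^2\geqslant C$. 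Your variant buys three things: it holds for every $n$ with $\gamma_n>0$, confirming your remark that the hypothesis ``$n$ sufficiently large'' is an artifact of the paper's use of $C<1$; it yields an exact expression for $|X|$ from which one could even extract the slightly sharper constant $1+\sqrt{2C}$ in the denominator; and it proves the second estimate in full (via $\sqrt{(1+C)^2-\cos^2(\pi\theta)}\geqslant\sqrt{C+\sin^2(\pi\theta)}$, the same expansion handling both bounds), whereas the paper dismisses that estimate as ``straightforward''. One minor point, common to both treatments: at $\cos(\pi\theta)=0$ the defining expression for $X$ reads $0/0$; your rationalized formula shows the natural value there is $0$, so the claimed bound holds at that point as well.
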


\begin{proof}
We only prove the first estimate, since the second one is straightforward.
Let us define $C:=\gamma_n/\lambda$. Then \[ |X(\theta)|=\frac{1}{|\cos(\pi\theta)|}\left\{ 1+C -\sqrt{(1+C)^2 -\cos^2 (\pi \theta)}\right\}.\]
For $n$ large enough, we have $C<1$, and then $(1+C)^2 \leqslant (1+C)(1+\sqrt{C})$. It follows that \[
1-\frac{\cos^2(\pi\theta)}{(1+C)(1+\sqrt C)}\leqslant{1-\frac{\cos^2(\pi\theta)}{(1+C)^2}} \leqslant\sqrt{1-\frac{\cos^2(\pi\theta)}{(1+C)^2}},\]
and we get the result straightforwardly.
\end{proof}

\subsection{Strategy of the proof of Theorem \ref{th:1}}

We have
\begin{equation*}
\begin{split}
\sigma_t^n (f,h)& =\sigma_0^n (f,h) -n^{a-2} \int_0^t \sum_{y,z\in\mathbb{Z}} (\nabla_n f) \big( \tfrac{y+z}{n} \big) h\big( \tfrac{y}{n}\big) \Big\langle j_{z,z+1} (sn^a) \, , \, \omega^2_0 (0) -\beta^{-1} \Big\rangle_{\beta} ds,
\end{split}
\end{equation*}
where $\nabla_n$ is the discretization of the derivative w.r.t. the lattice $n^{-1} \ZZ$, that is,
$ (\nabla_n f) \big( \tfrac{y}{n} \big) = n \left[ f \big( \tfrac{y+1}{n} \big)- f \big( \tfrac{y}{n} \big)\right].$
The discretization $\Delta_n$ of the Laplacian is defined in a similar way. Let us recall that the current writes $j_{z,z+1} = j^A_{z,z+1} +j^S_{z,z+1}$.

If $a<2$, it is easy to see that the contribution coming from $j^S_{z,z+1}$ vanishes, as $n \to \infty$, since a second integration by parts can be performed and we are in a subdiffusive time scale. If $a=2$ and $b=0$, the symmetric part of the current gives a non trivial contribution. More precisely, after an integration by parts we will get the term
\[ \int_0^t \sigma_s^n(\lambda f'',h) \, ds,\] and therefore the coefficient $\lambda$ will appear in the thermal conductivity.

We now assume that $a<2$. By using the fluctuation-dissipation equation \eqref{eq:FD} for the contribution coming from $j_{0,1}^A$, we obtain that
\begin{equation*}
\begin{split}
\sigma_t^n (f,h)&=\sigma_0^n (f,h) +n^{a-2} \int_0^t \sum_{y,z\in\mathbb{Z}} (\nabla_n f) \big( \tfrac{y+z}{n} \big) h\big( \tfrac{y}{n}\big) \Big\langle \;  (\tau_z v )(sn^a) \, , \, \omega^2_0 (0) -\beta^{-1} \; \Big\rangle_{\beta} ds\\
&\qquad -n^{a-2} \int_0^t \sum_{y,z\in\mathbb{Z}} (\nabla_n f) \big( \tfrac{y+z}{n} \big) h\big( \tfrac{y}{n}\big) \Big\langle \; ({\mc L}_n \tau_z u) (sn^a) \, , \, \omega^2_0 (0) -\beta^{-1}\;  \Big\rangle_{\beta} ds \; + \; o_n (1)\\
&=: \sigma_0^n (f,h) + V_t^n (f,h) -U_t^n (f,h) +o_n (1).
\end{split}
\end{equation*}
Let us now focus on the term $V_t^n(f,h)$. The function $v$ can be rewritten as
\begin{equation*}
v= \sum_{x\in\mathbb{Z}} \rho_1 (x) \nabla (\omega_x^2) + \psi
\end{equation*}
where
\begin{equation}\label{theta}
\psi=\sum_{x\in\mathbb{Z}}\sum_{k\geqslant 2}\rho_k(x)\nabla \Big\{\omega_{x-1}\omega_{x+k}+\omega_{x}\omega_{x+k-1}\Big\}+\sum_{x\in\mathbb{Z}}\rho_1(x)\Big\{{\omega_{x}\omega_{x+2}-\omega_{x-1}\omega_{x+1}}\Big\}.
\end{equation}
Then, accordingly to this decomposition, we write the term $V_t^n (f,h)$ as the sum of two terms
\begin{equation*}
V_t^n (f,h) = K_t^n (f,h) + \Psi_t^n (f,h).
\end{equation*}
It turns out that
\begin{equation*}
K_t^n (f,h)= - n^{a-1} \int_0^t \sigma_s^n (F , h) ds,
\end{equation*}
where the function $F$ is defined on $\tfrac{1}{n}\Z$ by
\begin{equation*}
\begin{split}
F \big( \tfrac{w}{n} \big) &= \sum_{z \in \ZZ} \rho_1 (z) \left[ (\nabla_n f) \big( \tfrac{w-z}{n}\big) -  (\nabla_n f) \big( \tfrac{w-z-1}{n}\big) \right]\\
&= \tfrac{1}{n} \sum_{z \in \ZZ} \rho_1 (z) (\Delta_n f) \big( \tfrac{w-z}{n}\big), \qquad w\in\Z.
\end{split}
\end{equation*}
In the sequel we prove the following convergences:\begin{enumerate}[(i)]
\item If $b<1$, then
$\lim_{n \to + \infty} |U_t^n (f,h)| =0.$
\item \label{eq:ii} If $a=2-b/2$ and $b<2$, then
$ \lim_{n\to\infty} \left\vert K_t^n(f,h)-\int_0^t \sigma_s^n(\kappa f'',h) \, ds\right\vert =0.$
\item If $b<2/3$, then $ \lim_{n\to\infty} |\Psi_t^n(f,h)|=0.$

\end{enumerate}

One can easily check that these three points imply Theorem \ref{th:1}. Besides, we shall see in the proof of \eqref{eq:ii} that the case $a<2-b/2$ corresponds to $\vert V_t^n(f,h)\vert \to 0$, as $n\to\infty$. In other words, there is no evolution up to the time scale $a=2-b/2$.

\begin{remark}
If $b>2$ and $a=3/2$, we can adapt the argument in the proof of \eqref{eq:ii} and show that the limit results in  a constant times the $3/4$-fractional Laplacian of $f$ (instead of a constant times the second derivative of $f$). However, this is not sufficient to prove that for $b>2$ the limit of the energy fluctuation field is given by a fractional heat equation, because we do not know how to control the other terms ($U^n_t$ and $\Psi_t^n$) for $b>2$. And in fact we know from the results of Section \ref{sec:bgrand} that the contribution of these terms is not trivial, since a drift term should appear.
\end{remark}

\subsection{Proofs of convergence and Boltzmann-Gibbs principle}

In this section we prove the above three following convergence results.

\begin{lemma}[Fluctuation part]
If $b<1$, then
$\lim_{n \to + \infty} |U_t^n (f,h)| =0.$
\end{lemma}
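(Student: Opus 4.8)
The plan is to remove the time integral by a stationarity (Dynkin) argument and then to show that the resulting boundary term is negligible. Since $\mu_\beta$ is invariant for $\mc L_n$, for every mean-zero $g\in\mathbf{L}^2(\mu_\beta)$ one has
\[
\int_0^t\big\langle(\mc L_n g)(sn^a)\,;\,\omega_0^2(0)-\beta^{-1}\big\rangle_\beta\,ds=\tfrac{1}{n^a}\Big[\big\langle e^{tn^a\mc L_n}g\,;\,\omega_0^2-\beta^{-1}\big\rangle_\beta-\big\langle g\,;\,\omega_0^2-\beta^{-1}\big\rangle_\beta\Big].
\]
I would apply this with $g=\tau_z u$, so that the prefactor $n^{a-2}$ collapses to $n^{-2}$. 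The initial term vanishes identically: $\tau_z u$ is a linear combination of products $\omega_p\omega_q$ with $p\ne q$, which are orthogonal in $\mathbf{L}^2(\mu_\beta)$ to the diagonal variable $\omega_0^2-\beta^{-1}$ (a one-line Wick computation). Hence
\[
U_t^n=\tfrac{1}{n^2}\sum_{y,z}(\nabla_n f)\big(\tfrac{y+z}{n}\big)h\big(\tfrac{y}{n}\big)\big\langle e^{tn^a\mc L_n}\tau_z u\,;\,\omega_0^2-\beta^{-1}\big\rangle_\beta .
\]

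First I would record the size of $u$. By $\widehat{\rho}_k=\widehat{\rho}_1 X^{k-1}$, Parseval and \eqref{eq:normalization_condition_u}, one has $\|u\|_{\mathbf{L}^2(\mu_\beta)}^2=\beta^{-2}\sum_{x,k}\rho_k(x)^2=\beta^{-2}\int_\TT|\widehat{\rho}_1(\theta)|^2\,(1-|X(\theta)|^2)^{-1}\,d\theta$, and the sharp bounds of Lemma \ref{lem:estimates} give $\|u\|_{\mathbf{L}^2(\mu_\beta)}^2=O(\gamma_n^{-1})=O(n^b)$. There is moreover a spatial cancellation to exploit: because the total energy $\sum_j(\omega_j^2-\beta^{-1})$ is conserved and $u$ is orthogonal to each $\omega_j^2-\beta^{-1}$, a summation by parts in $z$ (using the smoothness of $f$) turns $\nabla_n f$ into a second difference and gains a factor $n^{-1}$. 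Even so, estimating the correlation by Cauchy--Schwarz together with the contraction $\|e^{tn^a\mc L_n}\|_{\mathbf{L}^2(\mu_\beta)}\le 1$ only yields $|U_t^n|\to 0$ for $b<2/3$: the contraction bound discards the dissipation and is too lossy.

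To reach the full range $b<1$ I would use the dissipativity of the flip noise. On the off-diagonal second chaos one has $\mc S^{\rm flip}(\omega_p\omega_q)=-4\,\omega_p\omega_q$ for $p\ne q$, so $-\gamma_n\mc S^{\rm flip}\ge 4\gamma_n$ there, i.e.\ off-diagonal modes relax at rate $\gtrsim 4\gamma_n n^a$ in the accelerated dynamics. Writing $\langle e^{tn^a\mc L_n}\tau_z u\,;\,\omega_0^2-\beta^{-1}\rangle_\beta=\langle\tau_z u\,;\,\Phi_t\rangle_\beta$ with $\Phi_t=e^{tn^a\mc L_n^{\ast}}(\omega_0^2-\beta^{-1})$ and $\mc L_n^{\ast}=-\mc A+\mc S_n$, only the off-diagonal part $\Phi_t^{\rm off}$ of $\Phi_t$ pairs with $\tau_z u$. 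Now $\Phi_t^{\rm off}$ is created out of the conserved, slowly diffusing diagonal part $\Phi_t^{\rm diag}$ solely through $\mc A$ while being damped at rate $4\gamma_n n^a$, so it is slaved: $\Phi_t^{\rm off}\simeq-(4\gamma_n)^{-1}\,\mc A\,\Phi_t^{\rm diag}$ up to a fast transient, where the discrete gradient $\mc A\,\Phi_t^{\rm diag}$ is itself small because the energy profile $\Phi_t^{\rm diag}$ spreads diffusively over the time $tn^a$. The competition between this spreading and the factor $\gamma_n^{-1}=n^{b}$, together once more with Lemma \ref{lem:estimates} and Parseval, is what produces the threshold: $|U_t^n|\to 0$ precisely for $b<1$, with $a=2-b/2$.

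The main obstacle is exactly this quantitative fast--slow reduction: proving, uniformly in $z$ and after summation against the test functions, that $\Phi_t^{\rm off}$ is slaved to the diagonal component with the stated smallness, and controlling the transient before the slaving sets in. I would handle it by a resolvent (variational) estimate, pairing $\tau_z u$ against $\Phi_t^{\rm off}$ in the dual Sobolev norms built from $-\mc S_n$ and using the crucial fact that the Dirichlet energy of $u$ is small, $\langle u\,;\,(-\mc S_n)u\rangle_\beta=-2\beta^{-2}\rho_1(0)=O(\log n)$ by \eqref{eq:aim} and Lemma \ref{lem:estimates}, in sharp contrast with $\|u\|_{\mathbf{L}^2(\mu_\beta)}^2=O(n^b)$. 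This step is the analogue, in the accelerated and non-local setting of \eqref{eq:FD}, of the Boltzmann--Gibbs principle, and is where essentially all the difficulty of the $b<1$ range is concentrated.
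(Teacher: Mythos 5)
Your opening reduction is exactly the paper's: Dynkin's formula (equivalently, your semigroup identity) removes the time integral, the martingale part has zero correlation with $\omega_0^2(0)-\beta^{-1}$, and the time-zero term vanishes because $u$ only involves products $\omega_p\omega_q$ with $p\neq q$. The gap is in what you do next: your assertion that Cauchy--Schwarz together with the $\mathbf{L}^2(\mu_\beta)$-contraction of the semigroup ``only yields $b<2/3$'' is a miscalculation, and it is precisely this dismissed route that the paper follows to reach the full range $b<1$. (Note that Cauchy--Schwarz over the path measure plus stationarity, which is what the paper invokes, and your contraction bound $\|e^{tn^a\mc L_n}\|_{\mathbf{L}^2(\mu_\beta)}\le 1$ are the same estimate here.) Indeed, by translation invariance and Cauchy--Schwarz,
\[
|U_t^n(f,h)|\;\le\;\frac{C}{n}\,\Big\|\,\frac{1}{\sqrt n}\sum_{z\in\ZZ}(\nabla_n f)\big(\tfrac{z}{n}\big)\,\tau_z u\,\Big\|_{\mathbf{L}^2(\mu_\beta)},
\]
and the Parseval computation, which accounts exactly for the overlaps of the translates $\tau_z u$ instead of a crude triangle inequality, gives
\[
\Big\|\,\frac{1}{\sqrt n}\sum_{z\in\ZZ}(\nabla_n f)\big(\tfrac{z}{n}\big)\,\tau_z u\,\Big\|^2_{\mathbf{L}^2(\mu_\beta)}
=\frac{n}{\beta^2}\int_{\TT}\frac{|\widehat\rho_1(\theta)|^2}{1-|X(\theta)|^2}\,\big|\cF_n(\nabla_n f)\big|^2(n\theta)\,d\theta
\;\le\;\frac{Cn}{\sqrt{\gamma_n}}\int_{\TT}\frac{d\theta}{\tfrac{\gamma_n}{\lambda}+\sin^2(\pi\theta)}
\;\le\;\frac{Cn}{\gamma_n},
\]
using Lemma \ref{lem:estimates} and the boundedness of $|\cF_n(\nabla_n f)|$. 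Hence $|U_t^n|\le C(n\gamma_n)^{-1/2}=Cn^{(b-1)/2}$, which vanishes exactly when $b<1$. The threshold $2/3$ has nothing to do with this term: in the paper it comes from the Boltzmann--Gibbs estimate for $\Psi_t^n$ (the condition $a-3+2b<0$ with $a=2-b/2$), which you appear to have conflated with the fluctuation term $U_t^n$.

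Because of this wrong premise, you replace a short estimate by a fast--slow ``slaving'' scheme which, as written, is not a proof: the relation $\Phi_t^{\rm off}\simeq-(4\gamma_n)^{-1}\mc A\,\Phi_t^{\rm diag}$ is asserted rather than derived, neither the transient nor the error in the slaving is quantified, and the justification that $\mc A\,\Phi_t^{\rm diag}$ is small ``because the energy profile spreads diffusively'' is circular --- diffusive behavior of the energy is exactly what Theorem \ref{th:1} is trying to establish, and in the window $2/3\le b\le 1$ it is not even known to hold. Your side computations are correct ($\|u\|^2_{\mathbf{L}^2(\mu_\beta)}=O(\gamma_n^{-1})$, and $\langle u\,;(-\mc S_n)u\rangle_\beta=-2\beta^{-2}\rho_1(0)=O(\log n)$ is a nice observation), but they are never assembled into a bound on $U_t^n$. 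So the rigorous content of your proposal covers at most $b<2/3$, while the estimate you discarded as ``too lossy'' already gives the lemma for all $b<1$.
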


\begin{proof}
For each $z \in \ZZ$ we have that
\begin{equation*}
\begin{split}
n^a \int_0^t ({\mc L}_n \tau_z u) (sn^a) ds = (\tau_z u) (tn^a) - (\tau_z u) (0) +\mathcal{N}_{t}^{z,n},
\end{split}
\end{equation*}
where $\mathcal{N}^{z,n}$ is a martingale equal to $0$ at $t=0$ so that for any $t\geqslant 0$,
\[ \langle \;  \mathcal{N}_t^{z,n} \, , \, \omega^2_0 (0) -\beta^{-1}\; \rangle_{\beta} = 0.\]
Notice that the expression for $u$ only involves terms of the form $\omega_x\omega_y$ with $x\neq y$, so that $\langle \tau_z u (0) \, (\omega_0^2 (0) -\beta^{-1}) \rangle_{\beta}=0.$
We use the translation invariance of the dynamics in order to write
\begin{equation*}
\begin{split}
U_t^n (f,h)&= \tfrac{1}{n^2} \sum_{y,z \in \ZZ} (\nabla_n f) \big( \tfrac{y+z}{n} \big) h\big( \tfrac{y}{n}\big) \Big\langle \; \left[ (\tau_z u) (tn^a) - (\tau_z u) (0) \right]  \, , \, \omega^2_0 (0) -\beta^{-1}\;  \Big\rangle_{\beta}\\
&=\tfrac{1}{n}\left\langle  \left[ \tfrac{1}{\sqrt n} \sum_{y \in \ZZ} h\big( \tfrac{y}{n}\big) \Big( \omega^2_y (0) -\beta^{-1} \Big) \right] \left[ \tfrac{1}{\sqrt n} \sum_{z \in \ZZ} (\nabla_n f)\big( \tfrac{z}{n}\big) (\tau_z u) (tn^a) \right] \right\rangle_{\beta}.
\end{split}
\end{equation*}
Then, by using Cauchy-Schwarz's inequality and the stationarity of $\mu_\beta$, we have that there exist constants $C', C>0$ independent of $n$ such that
\begin{equation*}
\begin{split}
\left| U_t^n (f,h) \right|  &\le\tfrac{C'}{n} \bigg\| \tfrac{1}{\sqrt n} \sum_{y \in \ZZ} h\big( \tfrac{y}{n}\big) \Big( \omega^2_y (0) -\beta^{-1} \Big)\bigg\|_{{\bf L}^2 (\mu_\beta)}  \bigg\|  \, \tfrac{1}{\sqrt n} \sum_{z \in \ZZ} (\nabla_n f)\big( \tfrac{z}{n}\big)  \tau_z u  \,  \bigg\|_{{\bf L}^2 (\mu_\beta)} \\
& \le\tfrac{C}{n} \bigg\|  \, \tfrac{1}{\sqrt n} \sum_{z \in \ZZ} (\nabla_n f)\big( \tfrac{z}{n}\big)  \tau_z u  \,  \bigg\|_{{\bf L}^2 (\mu_\beta)}.
\end{split}
\end{equation*}
Using \eqref{eq:uu} and Parseval's relation, a simple computation shows that
\begin{equation*}
\begin{split}
\bigg\|  \, \frac{1}{\sqrt n} \sum_{z \in \ZZ} (\nabla_n f)\big( \tfrac{z}{n}\big)  \tau_z u  \,  \bigg\|_{{\bf L}^2 (\mu_\beta)}^2 &= \frac{1}{\beta^2 n}\;  \sum_{k\geqslant 1} \sum_{y, x \in \ZZ} (\nabla_n f) \big( \tfrac{y}{n}\big)(\nabla_n f) \big( \tfrac{y-x}{n}\big) \; \bigg\{ \sum_{z \in \ZZ} \rho_k(z) \rho_k (z-x)\bigg\}\\
&= \cfrac{1}{\beta^2 n}\;  \sum_{k\geqslant 1} \int_{\TT} |{\widehat \rho}_k (\theta)|^2 \bigg[ \sum_{y, x \in \ZZ} e^{-2i\pi \theta x} (\nabla_n f) \big( \tfrac{y}{n}\big)(\nabla_n f) \big( \tfrac{y-x}{n}\big) \bigg] \, d\theta\\
&=\cfrac{n}{\beta^2}\;  \sum_{k\geqslant 1} \int_{\TT} |{\widehat \rho}_k (\theta)|^2 |\cF_n(\nabla_n f)|^2 (n\theta) \, d\theta \\
&=  \cfrac{n}{\beta^2}\; \int_{\TT}\frac{|{\widehat \rho}_1 (\theta)|^2}{1-|X(\theta)|^2}  \, |\cF_n{(\nabla_n f)}|^2 (n\theta) \; d\theta.
\end{split}
\end{equation*}
In Appendix \ref{sec:fourier}, the Fourier transform of a function $g :\tfrac{1}{n} \Z \to \R$ is defined by
\begin{equation} \label{eq:FFF}
\cF_n(g) (k) = \tfrac{1}{n} \sum_{x \in \Z} g (\tfrac{x}{n}) e^{2i \pi kx/n}, \quad k\in \R.
\end{equation}
According to Lemma \ref{lem:estimates},  we know that there exists a constant $C>0$ such that
%
%
\begin{equation*}
\frac{|{\widehat \rho}_1 (\theta)|^2}{1-|X(\theta)|^2} \le  \cfrac{C} {\sqrt{\gamma_n}\Big[ \tfrac{\gamma_n}{\lambda} +\sin^2 (\pi \theta) \Big]}.
\end{equation*}
By Lemma \ref{lem:sfp}, $ |\cF_n(\nabla_n f)|^2 (n\theta)$ is bounded above by a constant $C>0$ independent of $n$ and $\theta$, from where we deduce that
\begin{equation*}
\begin{split}
\left\|  \, \frac{1}{\sqrt n} \sum_{z \in \ZZ} (\nabla_n f)\big( \tfrac{z}{n}\big)  \tau_z u  \,  \right\|_{{\bf L}^2 (\mu_\beta)}^2 & \le\cfrac{ Cn}{\sqrt{\gamma_n}} \int_{\TT}  \cfrac{1} {\left[ \tfrac{\gamma_n}{\lambda} +\sin^2 (\pi \theta) \right]}\, d\theta.
\end{split}
\end{equation*}
The RHS of the last inequality is of order $n/\gamma_n$ so that the lemma follows as soon as $\sqrt{n \gamma_n}$ diverges to $\infty$, which is equivalent to the condition $b<1$.
\end{proof}

Now we deal with the term $K_t^n(f,h)$.

\begin{lemma}[Diffusive behavior]
If $a=2- b/2$ and $b<2$, then
\[ \lim_{n\to\infty} \left\vert K_t^n(f,h)-\int_0^t \sigma_s^n(\kappa f'',h) \, ds\right\vert =0.\]
\end{lemma}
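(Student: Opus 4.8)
The plan is to reduce the statement to a deterministic $\ell^2$ estimate, and then to carry out that estimate by Fourier analysis, exploiting the separation between the localization length $\gamma_n^{-1/2}$ of $\rho_1$ and the macroscopic scale $n$ on which $f''$ varies. Since $-n^{a-1}F(\tfrac wn)=-n^{a-2}\sum_{z}\rho_1(z)(\Delta_n f)(\tfrac{w-z}{n})$, bilinearity of $\sigma_s^n$ in its first slot gives
\[
K_t^n(f,h)-\int_0^t\sigma_s^n(\kappa f'',h)\,ds=\int_0^t\sigma_s^n(G_n,h)\,ds,\qquad G_n(\tfrac wn):=-n^{a-1}F(\tfrac wn)-\kappa f''(\tfrac wn).
\]
As $\sigma_s^n(G_n,h)=\langle\mc E_s^n(G_n)\,;\,\mc E_0^n(h)\rangle_\beta$ is a covariance, Cauchy--Schwarz together with stationarity of $\mu_\beta$ under the dynamics gives $|\sigma_s^n(G_n,h)|\leqslant\langle\mc E_0^n(G_n);\mc E_0^n(G_n)\rangle_\beta^{1/2}\,\langle\mc E_0^n(h);\mc E_0^n(h)\rangle_\beta^{1/2}$. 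Because the $\omega_x$ are i.i.d.\ under $\mu_\beta$ with $\langle(\omega_x^2-\beta^{-1});(\omega_y^2-\beta^{-1})\rangle_\beta=2\beta^{-2}\mathbf 1_{\{x=y\}}$, the second factor stays bounded while the first equals $\sqrt2\,\beta^{-1}\big(\tfrac1n\sum_w G_n(\tfrac wn)^2\big)^{1/2}$. Hence, uniformly in $s\in[0,t]$, it suffices to prove $\tfrac1n\sum_{w\in\ZZ}|G_n(\tfrac wn)|^2\to0$.

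For this I pass to Fourier variables: writing $\widehat g(\theta)=\sum_w g(\tfrac wn)e^{2i\pi\theta w}$ for a sequence indexed by $w\in\ZZ$, the convolution structure yields $\widehat{G_n}(\theta)=\mu_n(\theta)\,\widehat{\Delta_n f}(\theta)-\kappa\,\widehat{f''}(\theta)$ with multiplier $\mu_n(\theta):=-n^{a-2}\widehat\rho_1(\theta)$, and by Parseval the quantity to control is $\tfrac1n\int_\TT|\widehat{G_n}|^2\,d\theta$. I split $\widehat{G_n}=\mu_n(\widehat{\Delta_n f}-\widehat{f''})+(\mu_n-\kappa)\widehat{f''}$. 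Lemma~\ref{lem:estimates} provides the uniform bound $|\mu_n(\theta)|\leqslant n^{a-2}\lambda^{-1}(\gamma_n/\lambda+\sin^2(\pi\theta))^{-1/2}\leqslant n^{a-2}(\lambda\gamma_n)^{-1/2}=(\lambda c)^{-1/2}$, where the last equality uses $a=2-b/2$. Consequently the first piece is bounded by $(\lambda c)^{-1}\,\tfrac1n\sum_w|(\Delta_n f-f'')(\tfrac wn)|^2=O(n^{-4})$, since a Taylor expansion of the Schwartz function $f$ gives $(\Delta_n f-f'')(\tfrac wn)=O(n^{-2})$ with rapid decay; this vanishes.

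The heart of the matter is the second piece $(\mu_n-\kappa)\widehat{f''}$. Using $\cos^2(\pi\theta)=1-\sin^2(\pi\theta)$ one finds $\mu_n(\theta)=n^{a-2}\big[\gamma_n+\lambda\sqrt{2\gamma_n/\lambda+(\gamma_n/\lambda)^2+\sin^2(\pi\theta)}\big]^{-1}$, so that $\mu_n(\theta)\to\kappa=(2\lambda c)^{-1/2}$ as soon as $\sin^2(\pi\theta)\ll\gamma_n$. I therefore fix a cutoff $\delta_n=n^{-\alpha}$ with $b/2<\alpha<1$, which is possible \emph{precisely because $b<2$}. On $\{|\theta|\leqslant\delta_n\}$ one has $\sin^2(\pi\theta)/\gamma_n=O(n^{b-2\alpha})\to0$, hence $\sup_{|\theta|\leqslant\delta_n}|\mu_n(\theta)-\kappa|\to0$; since $\tfrac1n\int_\TT|\widehat{f''}|^2\,d\theta=\tfrac1n\sum_w f''(\tfrac wn)^2$ stays bounded, this band contributes negligibly. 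On $\{|\theta|>\delta_n\}$ I bound $|\mu_n-\kappa|$ by a constant and show that $\widehat{f''}$ is tiny there: summing by parts $m$ times and using that the $m$-th discrete gradient of the Schwartz function $f''$ is $O(n^{-m})$ with rapid decay gives $|\widehat{f''}(\theta)|\leqslant C_m\,n^{1-m}|\sin(\pi\theta)|^{-m}$, whence $\tfrac1n\int_{|\theta|>\delta_n}|\widehat{f''}|^2\,d\theta=O(n^{1-2m(1-\alpha)})$, which vanishes for $m$ large because $\alpha<1$. Summing the two bands closes the estimate.

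The conceptual point, and the main obstacle, is this two-scale cancellation: $\rho_1$ is localized on scale $\gamma_n^{-1/2}\sim n^{b/2}$, which the hypothesis $b<2$ forces to be much shorter than the scale $n$ on which $f''$ varies; consequently the convolution $\rho_1\ast\Delta_n f$ effectively acts as multiplication by $\widehat\rho_1(0)\sim-(2\lambda\gamma_n)^{-1/2}$, and the prefactor $n^{a-2}=n^{-b/2}$ is tuned to balance the factor $n^{b/2}$ and produce exactly $\kappa=(2\lambda c)^{-1/2}$. The delicate step is to make this cancellation quantitative \emph{uniformly} on the band $|\theta|\lesssim 1/n$ carrying the mass of $\widehat{f''}$, which is exactly where the sharp inequalities of Lemma~\ref{lem:estimates} and the gap $b/2<\alpha<1$ are both needed.
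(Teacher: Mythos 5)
Your proof is correct, and its skeleton coincides with the paper's: both arguments reduce the claim, via Cauchy--Schwarz and stationarity of $\mu_\beta$, to the deterministic estimate $\tfrac1n\sum_{w}\big[n^{a-1}F\big(\tfrac wn\big)+\kappa f''\big(\tfrac wn\big)\big]^2\to0$, and both then prove this by Fourier analysis, with $a=2-b/2$ producing the constant $\kappa=(2\lambda c)^{-1/2}$ and $b<2$ guaranteeing convergence of the discrete symbol on the frequencies carrying the mass of $f$. The execution of the Fourier step, however, is genuinely different. The paper works with the rescaled transform $\cF_n$ on $\big[-\tfrac n2,\tfrac n2\big]$, replaces $\cF_n(f'')$ by ${\mc F}(f'')=-4\pi^2\xi^2\,{\mc F}f$ and then $4\pi^2\xi^2$ by $4n^2\sin^2\big(\pi\tfrac{\xi}{n}\big)$, so that everything collapses onto the single symbol difference \eqref{eq:quest}; this is estimated by the quadratic bound \eqref{eq:quest2} together with a cut at a \emph{fixed} frequency $A$ (tails controlled by Lemma \ref{lem:sfp}, whose proof is delegated to \cite{BGJ}) and uniform convergence of the symbol on each compact $|\xi|\le A$. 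You instead work on $\TT$ with the unscaled lattice transform and split the error as $\mu_n\big(\widehat{\Delta_n f}-\widehat{f''}\big)+(\mu_n-\kappa)\widehat{f''}$: the discretization part is killed by the uniform bound $|\mu_n|\le(\lambda c)^{-1/2}$ plus Taylor expansion, and the multiplier part by a two-band splitting at the $n$-dependent cutoff $\delta_n=n^{-\alpha}$, $b/2<\alpha<1$, with summation by parts supplying the tail decay of $\widehat{f''}$. Your route is more self-contained (the summation-by-parts bound replaces the imported Lemma \ref{lem:sfp}) and it isolates transparently where $b<2$ enters, namely in the existence of the gap $b/2<\alpha<1$ separating the localization scale $n^{b/2}$ of $\rho_1$ from the macroscopic scale $n$; the paper's route is more economical, recycling the Fourier machinery of its appendix used repeatedly elsewhere, and its exact identity \eqref{eq:quest} exhibits in one formula both the role of $a=2-b/2$ and the committed error, with no moving cutoff needed. (Your outer-band exponent $1-2m(1-\alpha)$ is a slightly cruder bound than the sharp $(1-2m)(1-\alpha)$, but it still tends to $-\infty$ as $m$ grows, so the conclusion is unaffected.)
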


\begin{proof} First, let us write
\begin{multline*}
K_t^n(f,h)-\int_0^t \sigma_s^n(\kappa f'',h) \, ds \\=  \int_0^t \tfrac{1}{n} \sum_{x,y\in \ZZ} \Big[-n^{a-1}F\big(\tfrac{x}{n}\big)-\kappa f''\big(\tfrac{x}{n}\big)\Big]h\big(\tfrac{y}{n}\big) \left\langle \big(\omega^2_x(tn^a)-\beta^{-1} \big)\big(\omega^2_y(0)-\beta^{-1}\big)\right\rangle_\beta.
\end{multline*}
Then, from the Cauchy-Schwarz inequality, we get that there exists a constant $C>0$ such that
\[\left\vert K_t^n(f,h)-\int_0^t \sigma_s^n(\kappa f'',h) \, ds  \right\vert^2 \le   \cfrac{C}{n} \sum_{w \in \ZZ} \Big[  \big(n^{a-1} F\big)\big( \tfrac{w}{n} \big) +  \kappa \big( f'' \big)\big( \tfrac{w}{n} \big)  \Big]^2.\]
We are reduced to prove that the RHS vanishes, as $n$ goes to $\infty$. The proof relies on the Fourier transform. The discrete Fourier transform of $n^{a-1} F$ is given by
\begin{equation*}
\begin{split}
n^{a-1} \cF_n(F) (\xi) &= n^{a-2} \sum_ {y\in\mathbb{Z}} F (\tfrac{y}{n}) e^{2i \pi \xi y/n} = n^{a-2} \cF_n{(\Delta_n f)} (\xi)  {\widehat \rho_1} (\tfrac{\xi}{n})\\
&= - n^a \; \cfrac{4\sin^2 (\pi \tfrac{\xi}{n}) }{\gamma_n +\lambda \sqrt{(1+\tfrac{\gamma_n}{\lambda} )^2 -\cos^2 (\pi \tfrac{\xi}{n})}} \cF_n(f)(\xi).
\end{split}
\end{equation*}
We denote
\[
q_n \big( \tfrac{\xi}{n}\big):=- n^a \; \cfrac{4\sin^2 (\pi \tfrac{\xi}{n}) }{\gamma_n +\lambda \sqrt{(1+\tfrac{\gamma_n}{\lambda} )^2 -\cos^2 (\pi \tfrac{\xi}{n})}} .
\]
Remind that $\kappa=\tfrac{1}{\sqrt{2\lambda c}}$. By Plancherel's relation it is equivalent to prove
\begin{equation*}
\lim_{n \to \infty} I_n := \lim_{n \to \infty}  \int_{\big[-\tfrac{n}{2}, \tfrac{n}{2}\big]} \big| q_n \big( \tfrac{\xi}{n}\big) \cF_n(f) (\xi)- \tfrac{1}{\sqrt{{2\lambda c}}} \cF_n(f'') (\xi) \big|^2 d\xi =0.
\end{equation*}
Since
$$\lim_{n \to \infty} \int_{\big[-\tfrac{n}{2}, \tfrac{n}{2}\big]} | \cF_n(g) (\xi) -( {\mc F} g) (\xi)|^2\, d\xi =0$$
for any $g \in {\mc S} (\RR)$, we can replace in $I_n$ the term $\cF_n(f'') (\xi)$ by ${\mc F} (f'') (\xi) = - 4 \pi^2 |\xi|^2 ({\mc F} f)(\xi)$, where ${\mc F}$ is the usual Fourier transform defined on ${\mc S} (\RR)$. We write then
\begin{equation*}
\begin{split}
& \int_{\big[-\tfrac{n}{2}, \tfrac{n}{2}\big]} \bigg| q_n \big( \tfrac{\xi}{n}\big) \cF_n(f) (\xi)+   \tfrac{4 \pi^2 \xi^2 }{\sqrt{{2\lambda c}}} (\mc F f) (\xi) \bigg|^2 d\xi\\ & \le 2  \int_{\big[-\tfrac{n}{2}, \tfrac{n}{2}\big]} \bigg| q_n \big( \tfrac{\xi}{n}\big)+ \tfrac{4 \pi^2 \xi^2 }{\sqrt{{2\lambda c}}}\bigg|^2 \, \big| \cF_n(f) (\xi) \big|^2 d\xi + \tfrac{16 \pi^4}{\lambda c}  \int_{\big[-\tfrac{n}{2}, \tfrac{n}{2}\big]} \xi^4 |\cF_n(f) (\xi) -({\mc F} f)(\xi)|^2 d\xi.
\end{split}
\end{equation*}
The last term of the RHS of the previous inequality goes to $0$, as $n \to \infty$, since $f\in{\mc S} (\RR)$. We are reduced to show that
$$\lim_{n \to \infty} \int_{\big[-\tfrac{n}{2}, \tfrac{n}{2}\big]} \bigg| q_n \big( \tfrac{\xi}{n}\big)+  \tfrac{4 \pi^2 \xi^2}{\sqrt{{2\lambda c}}} \bigg|^2 \big| \cF_n(f)  (\xi) \big|^2 d\xi =0.$$
Since $f\in{\mc S} (\RR)$, a simple application of Lemma \ref{lem:sfp} shows that it is equivalent to prove that
$$\lim_{n \to \infty} \int_{\big[-\tfrac{n}{2}, \tfrac{n}{2}\big]} \bigg| q_n \big( \tfrac{\xi}{n}\big)+   \tfrac{4n^2}{\sqrt{{2\lambda c}}} \sin^2(\pi \tfrac{\xi}{n}) \bigg|^2 \big| \cF_n(f)  (\xi) \big|^2 d\xi =0.$$
Observe now that
\begin{multline}
\label{eq:quest}
\bigg| q_n \big( \tfrac{\xi}{n}\big)+  \tfrac{4n^2}{\sqrt{{2\lambda c}}} \sin^2(\pi \tfrac{\xi}{n}) \bigg| \\ =   \tfrac{4n^2}{\sqrt{{2\lambda c}}} \sin^2(\pi \tfrac{\xi}{n}) \left|\cfrac{1}{\sqrt{1+ \tfrac{cn^{-b}}{2\lambda} + \tfrac{n^b \lambda}{2c}\sin^2 (\pi \tfrac{\xi}{n})   } +\sqrt{\tfrac{cn^{-b}}{2\lambda}} } - 1 \right|.
\end{multline}
In particular, we have
\begin{equation}
\label{eq:quest2}
\big| q_n \big( \tfrac{\xi}{n}\big)+  \tfrac{ 4n^2 }{\sqrt{{2\lambda c}}}  \sin^2(\pi \tfrac{\xi}{n}) \big| \le C  |\xi|^2.
\end{equation}
Observe that by Lemma \ref{lem:sfp} we have
\begin{equation*}
\limsup_{A \to \infty}\;  \limsup_{n \to \infty} \int_{A \le |\xi| \le n/2} |\xi|^4 \big| \cF_n(f)  (\xi) \big|^2 d\xi =0.
\end{equation*}
Thus, by \eqref{eq:quest2}, it is sufficient to prove that for any $A>0$ fixed,
$$\lim_{n \to \infty} \int_{|\xi|\le A} \bigg| q_n \big( \tfrac{\xi}{n}\big)+  \tfrac{ 4n^2 }{\sqrt{{2\lambda c}}} \sin^2(\pi \tfrac{\xi}{n}) \bigg|^2 \big| \cF_n(f)  (\xi) \big|^2 d\xi =0.$$
By \eqref{eq:quest} we have
\begin{multline*}
\int_{|\xi|\le A} \bigg| q_n \big( \tfrac{\xi}{n}\big)+ \tfrac{ 4n^2 }{\sqrt{{2\lambda c}}}  \sin^2(\pi \tfrac{\xi}{n}) \bigg|^2 \big| \cF_n(f)  (\xi) \big|^2 d\xi \\ \le {\ve}_n (A) \int_{|\xi| \le A} \big| \cF_n(f) (\xi)|^2 d\xi \le \cfrac{\ve_n (A)}{n} \sum_{x \in \ZZ} f^2 (\tfrac{x}{n})
\end{multline*}
where
\begin{equation*}
{\ve}_n (A) = \sup_{|\xi| \le A} \left\{  \tfrac{ 4n^2 }{\sqrt{{2\lambda c}}} \sin^2(\pi \tfrac{\xi}{n}) \left|\cfrac{1}{\sqrt{1+ \tfrac{cn^{-b}}{2\lambda} + \tfrac{n^b \lambda}{2c}\sin^2 (\pi \tfrac{\xi}{n})   } +\sqrt{\tfrac{cn^{-b}}{2\lambda}} } - 1 \right| \right\}
\end{equation*}
goes to $0$ as $n$ goes to infinity since $b<2$. Since \[\sup_n \tfrac{1}{n} \sum_{x \in \ZZ} f^2 (\tfrac{x}{n}) <\infty\] the claim follows.
\end{proof}

\begin{lemma}[Boltzman-Gibbs principle]
If $b<2/3$, then we have
$\lim_{n \to \infty} \vert \Psi_t^n (f,h) \vert =0.$
\end{lemma}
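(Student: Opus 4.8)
The plan is to exhibit $\Psi_t^n$ as the covariance of the static energy field with the time integral of a fast, centered functional, and then to kill that time integral with a Kipnis--Varadhan variance estimate. Using the stationarity of $\mu_\beta$ and the translation invariance of the dynamics to shift the argument $\tfrac{y+z}{n}$ of $\nabla_n f$ onto $\tfrac zn$, I would first write
\[
\Psi_t^n(f,h)=\Big\langle\, \mc E_0^n(h)\ ;\ n^{a-3/2}\int_0^t \Big(\sum_{z\in\ZZ}(\nabla_n f)\big(\tfrac zn\big)\,\tau_z\psi\Big)(sn^a)\,ds\,\Big\rangle_\beta .
\]
Since $\|\mc E_0^n(h)\|_{\mathbf{L}^2(\mu_\beta)}=O(1)$, the Cauchy--Schwarz inequality reduces the statement to showing that $\|\mc W_t^n\|_{\mathbf{L}^2(\mu_\beta)}\to0$, where $\mc W_t^n:=n^{a-3/2}\int_0^t V_n(sn^a)\,ds$ and $V_n:=\sum_z (\nabla_n f)(\tfrac zn)\,\tau_z\psi$.

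Next, because $\mu_\beta$ is stationary for the process generated by $n^a\mc L_n$, the standard variance bound for additive functionals (see \cite{KL}) gives
\[
\big\|\mc W_t^n\big\|_{\mathbf{L}^2(\mu_\beta)}^2\ \le\ C\,t\,n^{2a-3}\,n^{-a}\,\|V_n\|_{-1}^2\ =\ C\,t\,n^{a-3}\,\|V_n\|_{-1}^2 ,
\]
where $\|V_n\|_{-1}^2=\sup_{g}\big\{2\langle V_n\, ;\, g\rangle_\beta-\langle g\, ;\,(-\mc S_n)g\rangle_\beta\big\}$ is the $H_{-1}$ norm attached to the symmetric part $\mc S_n=\gamma_n\mc S^{\mathrm{flip}}+\lambda\mc S^{\mathrm{exch}}$; note that the large antisymmetric Liouville part $\mc A$ only helps in this upper bound, so it may be dropped.

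The core of the argument is the estimate of $\|V_n\|_{-1}^2$. The function $\psi$, hence $V_n$, lives in the off-diagonal quadratic sector spanned by the $\omega_a\omega_b$ with $a\neq b$, and on that sector $-\mc S^{\mathrm{flip}}$ acts as $4\,\mathrm{Id}$; combined with $-\mc S^{\mathrm{exch}}\ge0$ this yields $-\mc S_n\ge 4\gamma_n$ there, whence $\|V_n\|_{-1}^2\le (4\gamma_n)^{-1}\|V_n\|_{\mathbf{L}^2(\mu_\beta)}^2$. I would then compute $\|V_n\|^2_{\mathbf{L}^2(\mu_\beta)}$ by Parseval. The decisive structural fact is that \emph{every} monomial of $\psi$ is a discrete gradient --- the $k\ge2$ terms by construction, and the $k=1$ term because $\omega_x\omega_{x+2}-\omega_{x-1}\omega_{x+1}=\nabla(\omega_{x-1}\omega_{x+1})$ --- so each coefficient sequence of $\psi$ sums to zero. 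Consequently $\sum_{w}\langle \psi\, ;\,\tau_w\psi\rangle_\beta=0$, the leading contribution to $\|V_n\|^2$ (which is of order $n$ times a divergent power of $\gamma_n^{-1}$) cancels exactly, and one is left with a second-order term governed by $\sum_w w^2\langle\psi\,;\,\tau_w\psi\rangle_\beta$. Controlling this through the localization length $\gamma_n^{-1/2}$ and the bound $\sum_{k\ge1}\int_{\TT}|\widehat\rho_k|^2\lesssim\gamma_n^{-1}$ furnished by Lemma \ref{lem:estimates}, I would obtain $\|V_n\|^2_{\mathbf{L}^2(\mu_\beta)}\le C\,n^{-1}\gamma_n^{-5/2}$.

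Collecting the three steps and inserting $\gamma_n=cn^{-b}$ together with $a=2-b/2$ gives $\|\mc W_t^n\|^2\le C\,t\,n^{a-3}\,(4\gamma_n)^{-1}\,n^{-1}\gamma_n^{-5/2}= C'\,t\,n^{\,3b-2}$, which tends to $0$ precisely when $b<2/3$; this is where the hypothesis enters. The main obstacle is the $\mathbf{L}^2$ computation of the third step: one must simultaneously use the exact gradient cancellation $\sum_w\langle\psi;\tau_w\psi\rangle_\beta=0$ (without which the bound is hopeless) and control the second moment against the diverging localization length $\gamma_n^{-1/2}$ of the non-local functions $\rho_k$ coming from Lemma \ref{lem:estimates}. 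It is exactly this competition between the cancellation and the growth of the localization scale that prevents the method from closing in the window $b\in[2/3,1]$.
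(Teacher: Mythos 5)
Your first two steps coincide with the paper's own proof: translation invariance plus Cauchy--Schwarz reduce the lemma to showing that the $\mathbf{L}^2(\bb P_{\mu_\beta})$ norm of $n^{a-3/2}\int_0^t V_n(sn^a)\,ds$ vanishes, and the variance inequality of \cite{MR2952852} (Lemma 2.4) bounds this by $C\,t\,n^{a-3}$ times a resolvent norm of $V_n$ relative to the symmetric part $\mc S_n$. From there you genuinely depart. The paper estimates that resolvent norm by decomposing on the Hermite basis, restricting the variational formula to degree-two functions, comparing $\mc S_n$ with a symmetric random walk on $\ZZ^2$, and evaluating a two-dimensional Fourier integral (Appendices \ref{app:A}--\ref{app:estimate}), which yields the bound $Cn^{2b}$. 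You instead observe that $V_n$ lies in the closed span of $\{\omega_a\omega_b \, ; \, a\ne b\}$, that this sector is invariant under both noises, and that $-\mc S^{\mathrm{flip}}=4\,{\rm Id}$ on it while $-\mc S^{\mathrm{exch}}\geqslant 0$, so the resolvent norm is at most $(4\gamma_n)^{-1}\|V_n\|^2_{\mathbf{L}^2(\mu_\beta)}$. This spectral-gap shortcut is valid (invariance of the sector together with self-adjointness of $\mc S_n$ lets you restrict the supremum in the variational formula), and it is markedly simpler: it trades the entire random-walk machinery --- which is indispensable in \cite{BG,BGJ}, where no flip noise and hence no gap is available --- for a single static $\mathbf{L}^2$ computation.

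The gap is that this $\mathbf{L}^2$ computation, which now carries all the quantitative weight, is asserted rather than proved, and your real-space derivation of $\|V_n\|^2_{\mathbf{L}^2(\mu_\beta)}\leqslant Cn^{-1}\gamma_n^{-5/2}$ does not hold together as written: the cancellation $\sum_w\langle\psi\,;\tau_w\psi\rangle_\beta=0$ is correct, but the passage to $\sum_w w^2|\langle\psi\,;\tau_w\psi\rangle_\beta|$ and the counting of powers of the localization length are loose, and with absolute values one does not cleanly recover the exponent $5/2$. The clean proof is the one-dimensional Parseval computation already used in the paper's Fluctuation-part lemma: after summation by parts, the coefficient of $\omega_x\omega_{x+k}$ in $V_n$ is the convolution of $\psi_k$ with $n^{-1}\Delta_n f(\tfrac{\cdot}{n})$ (this factor $n^{-1}$ is the gradient cancellation), whence
\[
\|V_n\|^2_{\mathbf{L}^2(\mu_\beta)}\ \leqslant\ C\, n^2\int_{\TT}\sin^2(\pi\theta)\,\big|\cF_n(\nabla_n f)(n\theta)\big|^2\,\frac{|\widehat\rho_1(\theta)|^2}{1-|X(\theta)|^2}\,d\theta ,
\]
and Lemma \ref{lem:estimates} gives $|\widehat\rho_1|^2(1-|X|^2)^{-1}\leqslant C\gamma_n^{-1/2}\big[\gamma_n+\sin^2(\pi\theta)\big]^{-1}$, so splitting the integral at $|\theta|\sim 1/n$ and using Lemma \ref{lem:sfp} yields $\|V_n\|^2_{\mathbf{L}^2(\mu_\beta)}\leqslant C n^{-1}\gamma_n^{-3/2}$. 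Thus your claimed bound is a true (if non-sharp) upper bound, and with it your chain closes and gives exactly $n^{3b-2}\to 0$ for $b<2/3$, so the proposal is correct once this step is done properly. But note the striking consequence: inserting the sharp bound into your own chain gives $n^{a-4}\gamma_n^{-5/2}=n^{2b-2}$, i.e.\ the Boltzmann--Gibbs principle for all $b<1$, strictly more than the paper obtains. The reason is that the paper's Appendix \ref{app:A} bounds $|\cF_n(\Delta_n f)|$ by a constant and therefore pays the full corner singularity of the two-dimensional integral over the region $|k_1|,|k_2|\lesssim\sqrt{\gamma_n}$, whereas the decay of $\cF_n(\Delta_n f)$ confines $k_1+k_2$ to a window of width $1/n\ll\sqrt{\gamma_n}$. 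Either your route, made rigorous, genuinely enlarges the range of Theorem \ref{th:1} and shrinks the open window $[2/3,1]$, or there is a flaw in the spectral-gap step that I have not found; this point deserves careful verification before being relied upon.
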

\begin{proof}
Recall \eqref{theta}. Performing a simple computation we can rewrite $\psi$ as
\begin{equation*}
\psi= \sum_{x\in\mathbb{Z}}\sum_{k \geqslant 1} \big( \rho_{k-1} (x+1) + \rho_{k+1} (x) \big) \nabla \omega_x \omega_{x+k},
\end{equation*}
with the convention that $\rho_0 (x)=0$ for any $x\in \ZZ$. Let us introduce $\psi_k$ defined by
\begin{equation*}
\psi_k (x) =  \rho_{k-1} (x+1) + \rho_{k+1} (x).
\end{equation*}
By using space invariance of $\langle \cdot \rangle_{\beta}$ we get that
\begin{equation*}
\Psi_t^n (f,h)= n^{a-2} \left\langle \sum_{y\in\mathbb{Z}} h\big( \tfrac{y}{n}\big) (\omega_y^2 (0) -\tfrac{1}{\beta}) \; , \;  \int_0^t \varphi(\omega(sn^a))   \, ds \right\rangle_{\beta}
\end{equation*}
where the function $\varphi$ is given by
 \begin{equation}
\label{eq:Psifunction}
\begin{split}
\varphi (\omega)& = \sum_{x,z \in \ZZ} \sum_{k \geqslant 1} (\nabla_n f) \big( \tfrac{z}{n}\big) [ \psi_k (x-1) -\psi_k (x)]  (\omega_{x+z} \omega_{x+z+k} ).
\end{split}
\end{equation}

Thus, by the Cauchy-Schwarz's inequality, it is sufficient to prove that the ${\bf L}^2(\bb P_{\mu_{\beta}})$ norm of
\begin{equation*}
n^{a-3/2} \int_0^t \varphi(\omega (sn^a)) ds
\end{equation*}
vanishes with $n$, where $\bb P_{\mu_\beta}$ denotes the law of the Markov process $\{\omega(tn^a)\}_{t\geqslant 0}$ starting with $\mu_\beta$. We denote by $\bb E_{{\mu_\beta}}$ the corresponding expectation.
By a general inequality for the variance of additive functionals of Markov processes (see \cite{MR2952852}, Lemma 2.4), we have
\begin{align}
\bb E_{\mu_\beta} \left[\left( \int_0^t \varphi (\omega (sn^a))\, ds \right)^2 \right] &\le C t \left\langle \varphi \, , \, \left( t^{-1} - n^a {\mc S}_n \right)^{-1} \varphi  \right\rangle_{\beta} \notag\\
&= C t n^{-a}  \left\langle \varphi  \, , \, \left( [t n^a]^{-1} - {\mc S}_n \right)^{-1} \varphi  \right\rangle_{\beta} \label{eq:h-1psi}
\end{align}

We use some ideas from \cite{BG} in order to have a very sharp estimate of (\ref{eq:h-1psi}). In Appendix \ref{app:A}, we prove that $\langle \varphi  \, , \, ( [t n^a]^{-1} - {\mc S}_n )^{-1} \varphi  \rangle_{\beta}$ is bounded from above by $C n^{2b}$. As a consequence, the Boltzmann-Gibbs principle holds if $a-3+2b < 0$, and with the condition $a=2-b/2$ it gives $b<2/3$.
\end{proof}

\section{Superdiffusive domain of the energy}
\label{sec:bgrand}

 In this section we give the strategy of the proof of Theorem \ref{theo2}, which is the same as in \cite{BGJ}.  In the whole section, $a=3/2$, and $b>1$. We also assume $\beta=1$, the general case follows after an easy change of variables in the Markov process.

\subsection{Weak formulation}

\paragraph{\sc Two coupled differential equations} Let $g$ be a fixed function in $\mc S(\bb R)$. We define the process $\{\mc S_t^n; t \geqslant 0\}$ acting on functions $f \in \mc S(\bb R)$ as
\begin{equation}
\label{eq:stn}
\mc S_t^n(f):= \frac{1}{2}\sigma_t^n(f,g).
\end{equation}

for any $t \geqslant 0$, $n \in \bb N$. After arranging terms in a convenient way we have that
\begin{equation*}
\mc S_t^n(f) = \frac{1}{2}\left\langle \left\{\frac{1}{\sqrt n} \sum_{x \in \bb Z} g\Big(\frac{x}{n}\Big)\left(\omega_x^2(0)-1\right) \right\} \times
	\left\{ \frac{1}{\sqrt n} \sum_{y \in \bb Z}f\Big(\frac{y}{n}\Big)\left(\omega_{y}^2(tn^{3/2}) -1\right)  \right\}\right\rangle_1.
\end{equation*}
 For a function $h \in \mc S(\bb R)$ we define $\{ Q_t^n(h); t \geqslant 0\}$ as
\begin{equation*}
 Q_t^n(h) = \frac{1}{2}\Bigg\langle \Bigg\{\frac{1}{\sqrt n} \sum_{x \in \bb Z} g\left(\frac{x}{n}\right)\left(\omega_x^2(0)-1\right) \Bigg\} \times
	\Bigg\{ \frac{1}{ n} \sum_{\substack{y,z  \in \bb Z\\y \neq z}} h\left(\frac{y}{n}, \frac{z}{n}\right) \omega_{y}(tn^{3/2}) \omega_{z}(tn^{3/2})   \Bigg\}\Bigg\rangle_1.
\end{equation*}
Notice that $Q_t^n(h)$ depends only on the symmetric part of the function $h$. Therefore, we will always assume, without loss of generality, that $h(x,y) = h(y,x)$ for any $x,y \in \bb Z$. Notice as well that $Q_t^n(h)$ does not depend on the values of $h$ at the diagonal $\{x=y\}$.
Let us write now the differential equations for $\mc S_t^n(f)$ and $Q_t^n(h)$. We start by introducing some definitions.

\begin{definition}[Discrete approximations]  For  $f,h \in \mc S(\bb R)$,  we define the discrete approximation
\begin{enumerate}[(i)]
\item  $\Delta_n f : \bb R \to \bb R$ of the second derivative of $f$ as
\begin{equation*}
\Delta_n f \big( \tfrac{x}{n}\big) = n^2\Big( f\big(\tfrac{x+ 1}{n} \big) + f\big( \tfrac{x- 1}{n} \big) - 2 f\big(\tfrac{x}{n}\big) \Big).
\end{equation*}
\item  $\nabla_n f \otimes \delta :  \frac{1}{n} \bb Z^2 \to \bb R$ of the distribution $f\,'(x) \delta(x=y)$ as
\begin{equation}
\label{eq:3.9}
\big(\nabla_n f \otimes \delta\big) \big( \tfrac{x}{n}, \tfrac{y}{n}\big) =
\begin{cases}
\frac{n^2}{2}\big(f\big(\tfrac{x+1}{n}\big) - f\big(\tfrac{x}{n}\big)\big); & y =x+ 1\\
\frac{n^2}{2}\big(f\big(\tfrac{x}{n}\big) - f\big(\tfrac{x-1}{n}\big)\big); & y =x- 1\\
0; & \text{ otherwise.}
\end{cases}
\end{equation}

\item  $\Delta_n h : \bb R^2 \to \bb R$ of the Laplacian of $h$ as
\begin{equation*}
\Delta_n h\big( \tfrac{x}{n}, \tfrac{y}{n}\big) = n^2\Big( h\big( \tfrac{x+1}{n}, \tfrac{y}{n}\big)+h\big( \tfrac{x-1}{n}, \tfrac{y}{n}\big)+h\big( \tfrac{x}{n}, \tfrac{y+1}{n}\big)+ h\big( \tfrac{x}{n}, \tfrac{y-1}{n}\big) - 4 h\big( \tfrac{x}{n}, \tfrac{y}{n}\big)\Big),
\end{equation*}

\item  $A_n h: \bb R \to \bb R$ of the directional derivative $(-2,-2) \cdot \nabla h$ as
\begin{equation*}
A_n h\big( \tfrac{x}{n}, \tfrac{y}{n}\big) = n\Big(h\big( \tfrac{x}{n}, \tfrac{y-1}{n}\big)+ h\big( \tfrac{x-1}{n}, \tfrac{y}{n}\big)- h\big( \tfrac{x}{n}, \tfrac{y+1}{n}\big)-h\big( \tfrac{x+1}{n}, \tfrac{y}{n}\big)\Big),
\end{equation*}

\item  $\mc D_n h : \sfrac{1}{n} \bb Z \to \bb R$ of the directional derivative of $h$ along the diagonal $\{x=y\}$ as
\begin{equation*}
\mc D_n h\big( \tfrac{x}{n} \big) = n \Big( h \big(\tfrac{x}{n}, \tfrac{x+1}{n}\big) - h \big( \tfrac{x-1}{n}, \tfrac{x}{n} \big) \Big),
\end{equation*}

\item  $\tilde {\mc D}_n h :\sfrac{1}{n} \bb Z^2 \to \bb R$ of the distribution $\partial_y h(x,x) \otimes \delta(x=y)$ as
\begin{equation*}
\widetilde{\mc D}_n h (\tfrac{x}{n},\tfrac{y}{n}) =
\begin{cases}
n^2 \big(h\big(\tfrac{x}{n}, \tfrac{x+1}{n}\big)-h\big(\tfrac{x}{n}, \tfrac{x}{n}\big)\big); & y =x+1\\
n^2 \big(h\big(\tfrac{x-1}{n}, \tfrac{x}{n}\big)-h\big(\tfrac{x-1}{n}, \tfrac{x-1}{n}\big)\big); & y=x-1\\
0; & \text{ otherwise.}
\end{cases}
\end{equation*}
\end{enumerate}
\end{definition}

The following proposition can be deduced after straightforward computations:

\begin{proposition}\label{prop:equadiff} For any function $f\in\mc S (\bb R)$, any function $h \in \mc S(\bb R^2)$,
\begin{align}
\frac{d}{dt} \mc S_t^n(f) &= -2Q_t^n(\nabla_n f \otimes \delta) +  \mc S_t^n\left(n^{-1/2}\, \Delta_n f\right), \label{ec3.10}\\
\frac{d}{dt} Q_t^n(h) &= Q_t^n\big(L_n h \big) -2 \mc S_t^n\big( \mc D_n h\big) + 2 Q_t^n\big(n^{-1/2}\, \tilde{\mc D}_n h\big),\label{ec3.15}
\end{align}
where the operator $L_n$ is defined by
\begin{equation}L_n ={\sqrt n}\,  A_n + \frac{1}{\sqrt n}\,  \Delta_n - 4n^{3/2} \gamma_n {\rm Id}. \label{eq:operator}\end{equation}
\end{proposition}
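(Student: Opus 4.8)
The plan is to differentiate the correlation functions in time, using that the measure $\mu_1$ is invariant for the dynamics with generator $\mc L_n$. For local observables $A,B\in\mathbf L^2(\mu_1)$ one has $\langle A(0)\,B(tn^{3/2})\rangle_1=\langle A\,(e^{tn^{3/2}\mc L_n}B)\rangle_1$, hence
\[
\frac{d}{dt}\big\langle A(0)\,B(tn^{3/2})\big\rangle_1=n^{3/2}\,\big\langle A(0)\,(\mc L_n B)(tn^{3/2})\big\rangle_1 .
\]
Taking $A=\tfrac{1}{\sqrt n}\sum_x g(\tfrac xn)(\omega_x^2-1)$ and $B=\omega_y^2$ (for \eqref{ec3.10}) or $B=\omega_y\omega_z$ with $y\neq z$ (for \eqref{ec3.15}), the whole proposition reduces to computing $\mc L_n(\omega_y^2)$ and $\mc L_n(\omega_y\omega_z)$ for $\mc L_n=\mc A+\gamma_n\mc S^{\rm flip}+\mc S^{\rm exch}$, and then redistributing the resulting sums by discrete summations by parts so that the discrete operators act on the test functions instead of on the configuration. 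The factor $n^{3/2}$ is exactly what converts first differences into $\sqrt n\,A_n$ and $\nabla_n$, second differences into $\tfrac1{\sqrt n}\Delta_n$, and the multiplicative flip term into $-4n^{3/2}\gamma_n\,{\rm Id}$.

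For \eqref{ec3.10} the computation is short: $\mc S^{\rm flip}(\omega_y^2)=0$ since the flip preserves each $\omega_x^2$; $\mc S^{\rm exch}(\omega_y^2)=\omega_{y+1}^2+\omega_{y-1}^2-2\omega_y^2$ is a pure \emph{diagonal} (square) term; and $\mc A(\omega_y^2)=2\omega_y\omega_{y+1}-2\omega_y\omega_{y-1}$ is a pure \emph{off-diagonal} term. Shifting the index $y$ so that the second difference falls on $f$ turns the exchange part into $n^{3/2}n^{-2}\Delta_n f=n^{-1/2}\Delta_n f$ inside the energy correlation, i.e.\ $\mc S_t^n(n^{-1/2}\Delta_n f)$. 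For the transport part, a summation by parts moving the gradient onto $f$ rewrites $\sum_y f(\tfrac yn)(\omega_y\omega_{y+1}-\omega_y\omega_{y-1})=-\tfrac1n\sum_y(\nabla_n f)(\tfrac yn)\,\omega_y\omega_{y+1}$; recognizing the right-hand side as the pairing of the field against $\nabla_n f\otimes\delta$ and tracking the prefactors gives exactly $-2Q_t^n(\nabla_n f\otimes\delta)$. Summing the two contributions yields \eqref{ec3.10}.

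The identity \eqref{ec3.15} follows the same scheme but requires careful bookkeeping near the diagonal $\{y=z\}$. One has $\gamma_n\mc S^{\rm flip}(\omega_y\omega_z)=-4\gamma_n\,\omega_y\omega_z$, which after the time factor produces the multiplication operator $-4n^{3/2}\gamma_n\,{\rm Id}$ of \eqref{eq:operator}. The operator $\mc A(\omega_y\omega_z)=(\omega_{y+1}-\omega_{y-1})\omega_z+\omega_y(\omega_{z+1}-\omega_{z-1})$, reindexed onto $h$, contributes the first-order part $\sqrt n\,A_n$; a direct check shows that the bracket $h(\tfrac{y-1}n,\tfrac zn)+h(\tfrac yn,\tfrac{z-1}n)-h(\tfrac{y+1}n,\tfrac zn)-h(\tfrac yn,\tfrac{z+1}n)$ equals $\tfrac1n A_n h$, and the prefactors give precisely $\sqrt n$. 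The exchange operator never creates squares (swapping two distinct sites stays off the diagonal) and, reindexed onto $h$, produces the two-dimensional discrete Laplacian $\tfrac1{\sqrt n}\Delta_n$. Thus all genuinely off-diagonal contributions assemble into $Q_t^n(L_n h)$ with $L_n=\sqrt n\,A_n+\tfrac1{\sqrt n}\Delta_n-4n^{3/2}\gamma_n\,{\rm Id}$.

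The remaining two terms are the diagonal corrections, and handling them is the main obstacle. First, when $|y-z|=1$ the transport operator $\mc A$ creates collision (square) terms: for $z=y+1$ the diagonal part of $\mc A(\omega_y\omega_{y+1})$ is $\omega_{y+1}^2-\omega_y^2$, and summing both orientations against the symmetric $h$ and performing a summation by parts along the diagonal produces $-\tfrac2n\sum_y(\mc D_n h)(\tfrac yn)\,\omega_y^2$, which paired with the energy field is of $\mc S$-type and yields $-2\mc S_t^n(\mc D_n h)$. Second, the exchange Laplacian does not see the ``diagonal bond'': the adjacent pair $\omega_y\omega_{y+1}$ is left invariant by the swap at $(y,y+1)$, so replacing the true exchange action by the full $\Delta_n h$ (which references the values of $h$ on $\{y=z\}$) overcounts exactly the diagonal-supported terms, and adding these back gives the correction $2Q_t^n(n^{-1/2}\widetilde{\mc D}_n h)$. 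The delicate point is to verify that these boundary/collision contributions organize into $\mc D_n$ and $\widetilde{\mc D}_n$ with the correct constants $2$; once this is checked, combining the off-diagonal and diagonal pieces gives \eqref{ec3.15}, and all remaining manipulations are routine index shifts and discrete summations by parts.
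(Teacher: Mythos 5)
Your overall route is the same as the paper's (Appendix C.1): differentiate in time so that a factor $n^{3/2}\mc L_n$ falls on the time-$t$ observable, compute the generator on the quadratic observables, and reindex the sums so that the discrete operators act on the test functions. Your derivation of \eqref{ec3.10} is correct, as are the flip term $-4n^{3/2}\gamma_n\,{\rm Id}$ and the square-term computation yielding $-2\mc S_t^n(\mc D_n h)$. However, your bookkeeping of the near-diagonal terms in \eqref{ec3.15} is wrong in two places, and the two errors happen to cancel. (i) It is false that the reindexed transport operator contributes exactly $\sqrt n\,A_n$ on all off-diagonal pairs: for the ordered pair $(u,u+1)$ the true coefficient produced by $\mc A$ is $h\big(\tfrac{u-1}{n},\tfrac{u+1}{n}\big)-h\big(\tfrac un,\tfrac{u+2}n\big)$, while $\tfrac1n A_n h\big(\tfrac un,\tfrac{u+1}n\big)$ additionally contains the diagonal values $h\big(\tfrac un,\tfrac un\big)-h\big(\tfrac{u+1}n,\tfrac{u+1}n\big)$; hence writing the $\mc A$-part as $Q_t^n(\sqrt n\,A_nh)$ plus square terms drops, per bond, an off-diagonal correction $2\big[h\big(\tfrac{u+1}n,\tfrac{u+1}n\big)-h\big(\tfrac un,\tfrac un\big)\big]\omega_u\omega_{u+1}$ --- this is the term $-2\sum_x\big[f(x,x)-f(x+1,x+1)\big]\omega_x\omega_{x+1}$ in the paper's formula for $\mc A Q(f)$. (ii) Correspondingly, the exchange overcount alone is, per bond, $2\big\{\big[h\big(\tfrac un,\tfrac{u+1}n\big)-h\big(\tfrac un,\tfrac un\big)\big]+\big[h\big(\tfrac un,\tfrac{u+1}n\big)-h\big(\tfrac{u+1}n,\tfrac{u+1}n\big)\big]\big\}\omega_u\omega_{u+1}$, which is \emph{not} the coefficient $4\big[h\big(\tfrac un,\tfrac{u+1}n\big)-h\big(\tfrac un,\tfrac un\big)\big]\omega_u\omega_{u+1}$ that the term $2Q_t^n(n^{-1/2}\tilde{\mc D}_n h)$ requires; the mismatch is exactly $2\big[h\big(\tfrac un,\tfrac un\big)-h\big(\tfrac{u+1}n,\tfrac{u+1}n\big)\big]\omega_u\omega_{u+1}$, i.e.\ the negative of the term dropped in (i).

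So, as stated, two of your intermediate identities fail, and your outline does not close if each noise contribution is processed separately as you propose; it closes only once the missing $\mc A$-correction of (i) is added to the exchange correction of (ii), whereupon the diagonal values of $h$ cancel and the boundary terms recombine into $4\sum_u\big[h\big(\tfrac un,\tfrac{u+1}n\big)-h\big(\tfrac un,\tfrac un\big)\big]\omega_u\omega_{u+1}$, which is precisely the pairing behind $2Q_t^n(n^{-1/2}\tilde{\mc D}_nh)$. This is exactly how the paper organizes the computation: the boundary terms of $\mc S_nQ(f)$ and of $\mc AQ(f)$ are first summed into the single diagonal term $D(f)=2\sum_x\big(\omega_x^2-\beta^{-1}\big)\big(f(x-1,x)-f(x,x+1)\big)+4\sum_x\big(f(x,x+1)-f(x,x)\big)\omega_x\omega_{x+1}$, and only then identified with $-2\mc S_t^n(\mc D_nh)+2Q_t^n(n^{-1/2}\tilde{\mc D}_nh)$. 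The defect is repairable bookkeeping rather than a wrong method, but the claims ``all genuinely off-diagonal contributions assemble into $Q_t^n(L_nh)$'' and ``the exchange correction gives $2Q_t^n(n^{-1/2}\tilde{\mc D}_nh)$'' are individually false and cannot be verified separately, which is precisely the delicate point you deferred.
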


\paragraph{\sc A priori bounds} For $f \in \mc S(\bb R)$, define the weighted $\ell^2(\bb Z)$-norm as
\begin{equation*}
\|f\|_{2,n} = \sqrt{\vphantom{h^h_h}\smash{\tfrac{1}{n} \sum_{x \in \bb Z} f\big( \tfrac{x}{n} \big)^2}}.
\end{equation*}
By the Cauchy-Schwarz's inequality we have the {\em a priori} bound
\begin{equation}
\label{ec3.4}
\big| \mc S_t^n(f) \big| \leq \|g\|_{2,n} \| f\|_{2,n}
\end{equation}
for any $t \geqslant 0$, any $n \in \bb N$ and any $f,g \in \mc S(\bb R)$. Therefore, the term $ \mc S_t^n(\frac{1}{\sqrt n} \Delta_n f)$ is negligible, as $n \to \infty$. In \eqref{ec3.10}, the term $Q_t^n( \nabla_n f \otimes \delta)$ is the relevant one.  We also have the {\em a priori} bound
\begin{equation}
\label{ec3.6}
\big| Q_t^n(h) \big| \leq 2 \|g\|_{2,n} \|{\bar h} \|_{2,n},
\end{equation}
where $\|{\bar h}\|_n$ is the weighted $\ell^2(\bb Z^2)$-norm of $\bar h$
\begin{equation*}
\|{\bar h}\|_{2,n} = \sqrt{\vphantom{h^h_h}\smash{\tfrac{1}{n^2} \sum_{x, y \in \ZZ} {\bar h} \big(\tfrac{x}{n} , \tfrac{y}{n}\big)^2}}
\end{equation*}
and $\bar h$ is defined by
\begin{equation*}
{\bar h} \big(\tfrac{\vphantom{y}x}{n} , \tfrac{y}{n}\big) = h \big(\tfrac{\vphantom{y}x}{n} , \tfrac{y}{n}\big) \, {\bf 1}_{x \ne y}.
\end{equation*}

In equation \eqref{ec3.10}, both fields $\mc S_t^n$ and $Q_t^n$ appear with non-negligible terms. Moreover, the term involving $Q_t^n$ is quite singular, since it involves an approximation of a distribution. Let us give the clever strategy explained in \cite{BGJ}: given $f \in \mc S(\bb R)$, if we choose $h$ such that $L_nh=\nabla_n f\otimes\delta$, we can try to cancel the term $Q_t^n( \nabla_n f \otimes \delta)$ and $Q_t^n(L_n h)$ out. Then the term $\mc S_t^n(\mc D_n h)$ provides a non-trivial drift for the differential equation \eqref{ec3.10} and the term $Q_t^n( n^{-1/2} \tilde{\mc D}_n h)$ turns out to be negligible.

\subsection{Sketch of the Proof}

After giving the topological setting needed for the Theorem \ref{theo2}, we sketch the main steps of its proof, which are detailed in \cite{BGJ}.

\paragraph{\sc Topological setting} We fix a finite time-horizon $T>0$. Let us define the {\em Hermite polynomials} $H_\ell: \bb R \to \bb R$ as
\begin{equation*}
H_\ell(x) = (-1)^\ell e^{\frac{x^2}{2}} \frac{d^\ell}{dx^\ell} \Big[e^{-\frac{x^2}{2}}\Big]
\end{equation*}
for any $\ell \in \bb N_0$ and any $x \in \bb R$. We define the {\em Hermite functions} $f_\ell: \bb R \to \bb R$ as
\begin{equation*}
f_\ell(x) = \tfrac{1}{\sqrt{\ell! \sqrt{2 \pi}}} H_\ell(x) e^{-\frac{x^2}{4}}
\end{equation*}
For any $\ell \in \bb N_0$ and any $x \in \bb R$. The Hermite functions $\{f_\ell; \ell \in \bb N_0\}$ form an orthonormal basis of ${\bf L}^2(\bb R)$. For each $k \in \bb R$, we define the {\em Sobolev space} $\mc H_k$ as the completion of $\mc C_c^\infty(\bb R)$ with respect to the norm $\|\cdot\|_{\mc H_k}$ defined as
\begin{equation*}
\|g\|_{\mc H_k} = \sqrt{\vphantom{h^h_h}\smash{\sum_{\ell \in \bb N_0} (1+\ell)^{2k} \<f_\ell,g\>^2 } }
\end{equation*}
for any $g \in \mc C_c^\infty(\bb R)$. Here we use the notation $\<f_\ell,g\> = \int g(x) f_\ell(x) dx$.
Let us denote by $\mc C([0,T]; \mc H_{k})$ the space of continuous functions from $[0,T]$ to $\mc H_k$.

\paragraph{\sc Main steps of the proof} First, we need to show tightness, and then to characterize the limit points of weakly converging subsequences.

\begin{enumerate}
\item \textsc{Tightness}.
The same standard arguments exposed in \cite{BGJ} imply the following

\begin{lemma}
\label{l2}
For any $k>\frac{19}{24}$, the sequence $\{\mc S^n_t; t \in [0,T]\}_{n \in \bb N}$ is weakly relatively compact in $\mathbf L^2([0,T]; \mc H_{-k})$. Moreover, for any $t \in [0,T]$ fixed, the sequence $\{\mc S_t^n; n \in \bb N\}$ is sequentially, weakly relatively compact in $\mc H_{-k}$.
\end{lemma}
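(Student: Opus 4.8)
The plan is to reduce both assertions of the lemma to a single uniform bound and then to produce that bound from a sharp estimate on the discrete $\ell^2$-norms of the Hermite functions. Both $\mc H_{-k}$ and $\mathbf L^2([0,T];\mc H_{-k})$ are separable Hilbert spaces, hence reflexive, so a subset is weakly sequentially relatively compact as soon as it is norm bounded (Banach--Alaoglu together with the Eberlein--\v{S}mulian theorem); no time-regularity, and in particular none of the differential equations of Proposition \ref{prop:equadiff}, is needed for weak compactness. Consequently it suffices to exhibit a constant $C=C(T,g,k)$, independent of $n$, with
\[
\sup_{t\in[0,T]}\|\mc S_t^n\|_{\mc H_{-k}}^2\le C\qquad\text{for every }n\in\bb N .
\]
Integrating this pointwise-in-$t$ bound over $[0,T]$ controls $\|\mc S^n\|_{\mathbf L^2([0,T];\mc H_{-k})}^2$ by $TC$, which yields the first claim, while the bound at a fixed $t$ yields the second.

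To obtain such a bound I would expand the norm in the Hermite basis and invoke the \emph{a priori} estimate \eqref{ec3.4}. Since $\mc S_t^n$ is linear and $\mc S_t^n(f_\ell)^2\le\|g\|_{2,n}^2\,\|f_\ell\|_{2,n}^2$, we get
\[
\|\mc S_t^n\|_{\mc H_{-k}}^2=\sum_{\ell\in\bb N_0}(1+\ell)^{-2k}\,\mc S_t^n(f_\ell)^2\le\|g\|_{2,n}^2\sum_{\ell\in\bb N_0}(1+\ell)^{-2k}\,\|f_\ell\|_{2,n}^2 .
\]
The prefactor $\|g\|_{2,n}^2=\tfrac1n\sum_x g(x/n)^2$ is bounded uniformly in $n$, being a convergent Riemann sum for $\int g^2$ with $g\in\mc S(\bb R)$. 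Everything is therefore reduced to the single, time-independent estimate
\[
\sup_{n\in\bb N}\ \sum_{\ell\in\bb N_0}(1+\ell)^{-2k}\,\|f_\ell\|_{2,n}^2<\infty\qquad\text{for }k>\tfrac{19}{24}.
\]

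The heart of the matter is thus a growth bound, uniform in $n$, for the sampled norms $\|f_\ell\|_{2,n}^2=\tfrac1n\sum_{x}f_\ell(x/n)^2$. I would control these through the classical asymptotics of Hermite functions: the uniform sup bound $\|f_\ell\|_\infty\le C(1+\ell)^{-1/12}$ (Plancherel--Rotach / Askey--Wainger), the refined amplitude bounds inside the oscillatory bulk $|x|\lesssim\sqrt\ell$, and the Gaussian decay of the tails. A convenient complementary device is the Koksma-type Riemann-sum error bound $\big|\tfrac1n\sum_x f_\ell(x/n)^2-\int f_\ell^2\big|\le\tfrac1n\,\mathrm{TV}(f_\ell^2)$, together with $\mathrm{TV}(f_\ell^2)\le 2\|f_\ell\|_2\|f_\ell'\|_2\lesssim\sqrt\ell$. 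Combining these over the three regions yields a polynomial bound $\|f_\ell\|_{2,n}^2\le C(1+\ell)^{\alpha}$ uniform in $n$; the series above then converges precisely when $2k>1+\alpha$, and the exponent furnished by the Hermite estimate gives the stated range $k>\tfrac{19}{24}$. I expect this final step — extracting the correct exponent for $\|f_\ell\|_{2,n}^2$ \emph{uniformly} in both $\ell$ and $n$, in particular in the regime $\ell\gtrsim n^2$ where the lattice $\tfrac1n\bb Z$ no longer resolves the oscillations of $f_\ell$ — to be the only genuine difficulty; the reduction steps are soft functional analysis and the routine Riemann-sum control of $\|g\|_{2,n}$.
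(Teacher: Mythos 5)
Your proposal is correct and follows essentially the same route as the paper, which for this lemma simply invokes the standard argument of \cite{BGJ}: since $\mc H_{-k}$ and $\mathbf L^2([0,T];\mc H_{-k})$ are separable Hilbert spaces, weak relative compactness reduces to a norm bound uniform in $n$ and $t$, and this bound is obtained exactly as you do, by expanding $\|\mc S_t^n\|_{\mc H_{-k}}^2$ in the Hermite basis, applying the \emph{a priori} bound \eqref{ec3.4} together with $\sup_n\|g\|_{2,n}<\infty$, and controlling $\sum_{\ell}(1+\ell)^{-2k}\|f_\ell\|_{2,n}^2$ through estimates on the Hermite functions. One remark: the step you single out as the only genuine difficulty (uniformity in $n$ in the regime $\ell\gtrsim n^2$) is already settled by the Koksma/total-variation device you yourself introduce, since
\[
\|f_\ell\|_{2,n}^2\;\le\;\int_{\R} f_\ell^2(y)\,dy+\tfrac{1}{n}\,\mathrm{TV}(f_\ell^2)\;\le\;1+\tfrac{2}{n}\,\|f_\ell'\|_{\mathbf L^2(\R)}\;\le\;1+2\sqrt{\ell+1}
\]
uniformly in $n$ (use $\|f_\ell\|_{\mathbf L^2(\R)}=1$, $\|f_\ell'\|_{\mathbf L^2(\R)}^2=\tfrac{2\ell+1}{4}$ from the recursion $f_\ell'=\tfrac{\sqrt\ell}{2}f_{\ell-1}-\tfrac{\sqrt{\ell+1}}{2}f_{\ell+1}$, and simply bound $1/n\le 1$), so the series converges for every $k>3/4$, and since $19/24>3/4$ this covers the stated range with no Plancherel--Rotach asymptotics and no case analysis in $\ell$ versus $n$ at all.
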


\item \textsc{Characterization of limit points}. Fix $k >\frac{19}{24}$ and let us consider a limit point of $\{\mc S_t^n; t \in [0,T]\}_{n \in \bb N}$   with respect to the weak topology of $\mathbf L^2([0,T]; \mc H_{-k})$.  Without loss of generality we can denote by $n$ the subsequence for which $\{\mc S_t^n; t \in [0,T]\}_{n \in \bb N}$ converges to $\{\mc S_t; t \in [0,T]\}$.
The aim is to prove the following

\begin{proposition}\label{prop:unique}
Let $f: [0,T] \times \bb R \to \bb R$ be a smooth function of compact support (in $\mc C_c^\infty([0,T] \times \bb R)$). Then,
\begin{equation}
\label{ec4.43}
\mc S_T(f_T) = \mc S_0(f_0) + \int_0^T \mc S_t\big((\partial_t+\bb L)f_t\big) dt,
\end{equation}
where $\bb L$ is defined in \eqref{frac ope}.
\end{proposition}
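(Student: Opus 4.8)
The plan is to follow the strategy of \cite{BGJ} recalled above: introduce a corrected test function that cancels the singular term $Q_t^n(\nabla_n f \otimes \delta)$ appearing in \eqref{ec3.10}, and then read off the limiting generator $\bb L$ from the remaining non-negligible drift. The only genuinely new ingredient compared to \cite{BGJ} is the zeroth-order (killing) term $-4n^{3/2}\gamma_n\,\mathrm{Id} = -4cn^{3/2-b}\,\mathrm{Id}$ in the operator $L_n$ of \eqref{eq:operator}, produced by the flip noise acting on the off-diagonal products $\omega_y\omega_z$ (which it does not leave invariant, unlike the energy $\omega_x^2$); the whole point of the hypothesis $b>1$ is to guarantee that this term is a vanishing perturbation which does not contribute to the limit.

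First, for a fixed $f$ I would solve, via the Fourier transform on $\bb Z^2$, the discrete resolvent (Poisson) equation
\begin{equation*}
L_n h^n = \nabla_n f \otimes \delta,
\end{equation*}
whose right-hand side is supported on the off-diagonal neighbours of $\{x=y\}$. The operator $L_n$ is invertible on the off-diagonal sector, the killing term only improving invertibility. From the explicit solution I would extract three estimates, exactly as in \cite{BGJ}: (a) $\|\bar h^n\|_{2,n}\to0$; (b) $n^{-1/2}\|\overline{\tilde{\mc D}_n h^n}\|_{2,n}\to 0$; and, most importantly, (c) $\mc D_n h^n \to -\tfrac14\,\bb L f$, with $\bb L$ the operator \eqref{frac ope}. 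Step (c) is where the fractional operator is produced: the symbol of $\sqrt n\, A_n + \tfrac1{\sqrt n}\Delta_n$ balances near the resonant frequencies to yield the exponent $3/4$, while the killing symbol $4cn^{3/2-b}$ is, for every fixed macroscopic frequency, negligible against it as soon as $b>0$; the role of $b>1$ is to make this domination uniform enough to survive integration over all frequencies up to $n/2$.

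With $h^n$ in hand, I would combine \eqref{ec3.10} and \eqref{ec3.15}: since $Q_t^n(L_n h^n)=Q_t^n(\nabla_n f\otimes\delta)$, the corrected field $R_t^n := \mc S_t^n(f) + 2Q_t^n(h^n)$ satisfies
\begin{equation*}
\frac{d}{dt} R_t^n = -4\,\mc S_t^n(\mc D_n h^n) + 4\,Q_t^n\big(n^{-1/2}\tilde{\mc D}_n h^n\big) + \mc S_t^n\big(n^{-1/2}\Delta_n f\big).
\end{equation*}
Integrating on $[0,T]$ and invoking the a priori bounds \eqref{ec3.4} and \eqref{ec3.6} together with (a) and (b), the correction $2Q_t^n(h^n)$, the $\tilde{\mc D}$-term and the $\Delta_n f$-term all vanish as $n\to\infty$. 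Along the converging subsequence given by Lemma \ref{l2}, the strong convergence $-4\mc D_n h^n\to\bb L f$ from (c) then upgrades $-4\mc S_t^n(\mc D_n h^n)$ to $\mc S_t(\bb L f)$ (a weakly convergent field paired against a strongly convergent test function), yielding $\mc S_T(f)=\mc S_0(f)+\int_0^T\mc S_t(\bb L f)\,dt$ for time-independent $f$.

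Finally, for a time-dependent $f_t\in\mc C_c^\infty([0,T]\times\bb R)$ I would run the same computation with $h^n=h^n_t$ solving $L_n h^n_t = \nabla_n f_t\otimes\delta$. The only new terms are $\mc S_t^n(\partial_t f_t)$, produced by differentiating the test function, and $2Q_t^n(\partial_t h^n_t)$; since $\partial_t h^n_t$ solves the same Poisson equation with datum $\nabla_n(\partial_t f_t)\otimes\delta$, the latter is negligible by the analogue of (a), while the former converges to $\mc S_t(\partial_t f_t)$. Summing the two contributions assembles the operator $\partial_t+\bb L$ and hence \eqref{ec4.43}. The main obstacle throughout is the resolvent analysis of step (c): establishing the precise convergence $\mc D_n h^n\to-\tfrac14\bb Lf$ while simultaneously controlling $\|\bar h^n\|_{2,n}$, and verifying that the flip-induced killing term genuinely disappears in the limit across the full range $b>1$.
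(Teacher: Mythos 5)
Your scheme (a single corrector $h_n$ solving $L_n h_n=\nabla_n f\otimes\delta$, cancellation of the singular term $Q_t^n(\nabla_n f\otimes\delta)$, and identification of $\bb L$ from $\mc D_n h_n$) coincides with the paper's argument up to equation \eqref{ec4.277}, and your items (a) and (c) are exactly the content of Lemma \ref{l3}. The gap is your estimate (b): it is false. The quantity you claim vanishes is, up to a factor $2$,
\[
\big\| \tfrac{1}{\sqrt n}\,\tilde{\mc D}_n h_n \big\|_{2,n}^2
\;=\; \tfrac{2}{n^3}\sum_{x\in\bb Z}\big(\tilde{\mc D}_n h_n\big)^2\big(\tfrac{x}{n},\tfrac{x+1}{n}\big),
\]
and the paper states explicitly (see \eqref{eq:l33}) that this expression is of order one: the corrector $h_n$ retains a non-decaying kink across the diagonal, so the distribution-like term $\tilde{\mc D}_n h_n$ does not become small in this norm. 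Consequently the a priori bound \eqref{ec3.6} cannot dispose of $4\int_0^T Q_t^n\big(\tfrac{1}{\sqrt n}\tilde{\mc D}_n h_n\big)\,dt$, and your argument stalls precisely at \eqref{ec4.277}.

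The paper's resolution is a second corrector, which is the one genuinely new structural step you are missing: let $v_n$ solve the Poisson equation $L_n v_n = n^{-1/2}\tilde{\mc D}_n h_n$ as in \eqref{eq:poisson-v}, and apply \eqref{ec3.15} once more with $h=v_n$. This trades the stubborn term for $\mc S_t^n(\mc D_n v_n)$, $Q_t^n\big(\tfrac{1}{\sqrt n}\tilde{\mc D}_n v_n\big)$ and the boundary terms $Q_T^n(v_n)-Q_0^n(v_n)$; Lemma \ref{lem:5,7.} (proved in Appendix \ref{appc}, where the flip-noise contribution and the hypothesis $b>1$ enter the resolvent estimates) shows that $\|v_n\|_{2,n}$, $\|\mc D_n v_n\|_{2,n}$ and $\big\|\tfrac{1}{\sqrt n}\tilde{\mc D}_n v_n\big\|_{2,n}$ all vanish, which is what actually closes the identification \eqref{ec4.27}. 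A second, lesser issue: you pass to the limit in $\int_0^T\mc S_t^n(\bb L f)\,dt$ too quickly. Even for $f\in\mc C_c^\infty(\bb R)$ the function $\bb L f$ has heavy tails, so it is not an admissible test function for the weak convergence in $\mathbf L^2([0,T];\mc H_{-k})$; the paper needs the truncation arguments and the Lipschitz property of $t\mapsto\mc S_t(f)$ from \cite{BGJ} at this point. The first issue is a genuine mathematical error (your claim (b) is not merely unproven but contradicted by \eqref{eq:l33}); the second is a nontrivial step that your proposal waves through.
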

Then, $\{\mc S_t; t \in [0,T]\}$ is a {\em weak solution} of the fractional heat equation:
\begin{equation*}
\partial_t u =  -\tfrac{1}{\sqrt{2}}\big\{ (-\Delta)^{3/4} + \nabla (-\Delta)^{1/4}\big\} u,
\end{equation*} as defined in (2.1) of \cite{J}. In Section 8.1 of \cite{J}, it is shown that there is a unique solution of \eqref{ec4.43} and therefore the limit process $\{\mc S_t; t \in[0,T]\}$ is unique. Proposition \ref{prop:unique} is the most challenging part of the proof, and the next section is devoted to it.

\item \textsc{Conclusion.} The proof of Theorem \ref{theo2} is almost done: the first two points imply that the sequence $\{\mc S_t^n; t \in [0,T]\}_{n \in \bb N}$ weakly converges in $\mathbf L^2([0,T]; \mc H_{-k})$  to a unique limit point,  denoted by $\{\mc S_t; t \in [0,T]\}$.

It can be proved that the convergence also holds for fixed times $t \in [0,T]_{\bb Q}$ with respect to the weak topology of $\mc H_{-k}$, where \[[0,T]_{\bb Q} := \left\{t \in [0,T]; \frac{t}{T} \in \bb Q\right\}.\] Since $T$ is arbitrary, this last convergence holds for any $t \in [0,\infty)$.  In particular, $\mc S_t^n(f)$ converges to $\mc S_t(f)$, as $n \to \infty$, for any $f \in \mc C_c^\infty(\bb R)$.

\end{enumerate}

%

The main differences between the model in \cite{BGJ} and ours rely on the velocity-flip noise, of intensity $\gamma_n$. This additional perturbation first appears in the definition of the operator $L_n$ in \eqref{eq:operator}. Then, some technical proofs have to be slightly modified. More precisely, rigorous convergence estimates lead to the condition: $b>1$.

\subsection{Convergence estimates}

 Here we give the proof of Proposition \ref{prop:unique}. Recall that $b>1$. Let us assume that $\{\mc S_t^n; t \in [0,T]\}_{n \in \bb N}$ converges to $\{\mc S_t; t \in [0,T]\}$ with respect to the weak topology of  $\mathbf L^2([0,T]; \mc H_{-k})$.

 Let us fix $f \in \mc S(\bb R)$ and let $h_n : \frac{1}{n} \bb Z \times  \frac{1}{n} \bb Z \to \bb R$ be the solution of the equation
\begin{equation}
\label{ec4.22}
L_n h_n= \nabla_n f \otimes \delta.
\end{equation}
The following properties of $h_n$ are shown in Appendix \ref{appc}, following \cite{BGJ}:

\begin{lemma}
\label{l3}
Let $f \in \mc S(\bb R)$.
The solution of \eqref{ec4.22} satisfies
\begin{align}
\lim_{n \to \infty} \ & \tfrac{1}{n^2} \sum_{x,y \in \bb Z} h_n^2\big(\tfrac{x}{n}, \tfrac{y}{n}\big) =0, \label{eq:l31}\\
\lim_{n \to \infty} \ & \tfrac{1}{n} \sum_{x \in \bb Z} \big|\mc D_n h_n \big(\tfrac{x}{n}\big) +\tfrac{1}{4}\bb L f\big(\tfrac{x}{n}\big) \big|^2 =0. \label{eq:l32}
\end{align}
In other words, $\|h_n\|_{2,n}$ and $\big\|\mc D_n h_n +\frac{1}{4} \bb Lf \big\|_{2,n}$ converge to $0$, as $n \to \infty$.
\end{lemma}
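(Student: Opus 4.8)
The plan is to solve the linear equation \eqref{ec4.22} explicitly by Fourier transform and then to read off both \eqref{eq:l31} and \eqref{eq:l32} from the resulting formula, following \cite{BGJ} but keeping track of the extra contribution $-4n^{3/2}\gamma_n\,{\rm Id}=-4c\,n^{3/2-b}\,{\rm Id}$ to the operator $L_n$ of \eqref{eq:operator}. Since both $L_n$ and the source $\nabla_n f\otimes\delta$ are invariant under the diagonal shift $(x,y)\mapsto(x+1,y+1)$, I first Fourier transform in the center-of-mass direction, with dual variable $P$, and keep the relative coordinate $r=y-x\geqslant 1$ as a discrete index, using $h_n(x,y)=h_n(y,x)$. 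Writing the two Fourier angles as $\theta_1=\tfrac P2+q$ and $\theta_2=\tfrac P2-q$, with $P=2\pi k/n$ the slow (macroscopic) variable and $q$ the fast relative variable, the symbol of $L_n$ reads
\[
\widehat{L}_n \;=\; -2in^{3/2}(\sin\theta_1+\sin\theta_2)\;-\;4n^{3/2}\big(\sin^2\tfrac{\theta_1}2+\sin^2\tfrac{\theta_2}2\big)\;-\;4c\,n^{3/2-b}.
\]
The flip term adds a strictly negative constant, so $\widehat L_n$ never vanishes and the equation $\widehat L_n\,\widehat h_n=\widehat{\nabla_n f\otimes\delta}$ is invertible for every $(P,q)$. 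In the relative coordinate this inversion amounts to a three-term recursion with a source at $r=1$, whose $\ell^2$-summable solution is geometric, $\phi_r(P)=\phi_1(P)X(P)^{r-1}$, of exactly the square-root type met in Lemma \ref{lem:estimates}; everything is thus reduced to sharp control of $\phi_1$ and $X$.

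For \eqref{eq:l31} I would invoke Parseval's identity to write $\|h_n\|_{2,n}^2$ as an integral over $P$ of $\sum_{r\geqslant 1}|\phi_r|^2=|\phi_1|^2/(1-|X|^2)$, exactly as in the computation of $\big\|\tfrac1{\sqrt n}\sum_z(\nabla_n f)(\tfrac zn)\tau_z u\big\|^2$ carried out in the diffusive part. Estimating $|X|$ and $|\phi_1|$ by the analogue of Lemma \ref{lem:estimates} and using that $\cF_n f$ localizes $P$ at the macroscopic scale $P\sim n^{-1}$, a direct computation bounds this integral by a negative power of $n$, which gives \eqref{eq:l31}.

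The delicate statement is \eqref{eq:l32}. The operator $\mc D_n$ restricts $h_n$ to the first off-diagonal $r=1$ and differentiates along the diagonal, so in Fourier variables $\cF_n(\mc D_n h_n)(k)$ equals, up to a transport phase, $n(iP)$ times the value of $h_n$ at $r=1$, that is $n(iP)\phi_1(P)$ with $P=2\pi k/n$; and $\phi_1(P)$ is itself a $q$-integral of $\widehat{\nabla_n f\otimes\delta}/\widehat L_n$. The key is the asymptotic evaluation of this integral. Near $q=0$ one has $\widehat L_n\approx -2n^{3/2}q^2-4\pi ik\,n^{1/2}-4c\,n^{3/2-b}$, so transport and diffusion balance in a boundary layer of width $q\sim n^{-1/2}$, where $\widehat L_n$ has size $n^{1/2}$. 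Rescaling $q=n^{-1/2}u$ turns the $q$-integral into the convergent integral $\int_{\R}\frac{du}{-2u^2-4\pi ik}$, which equals a constant times $|k|^{-1/2}$ with the correct asymmetric imaginary part; the overall prefactor $n(iP)=2\pi ik$ and the order-$k$ macroscopic content of the source then combine with this $|k|^{-1/2}$ to produce the $3/2$-homogeneous symbol of $\bb L$ acting on $f$, with constant $-\tfrac14$. The limit \eqref{eq:l32} follows from Plancherel together with the rapid decay of $\cF f$, after replacing discrete transforms by their continuous counterparts as in the diffusive estimate.

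The main obstacle, and the single place where $b>1$ is genuinely used, is the control of the flip correction $-4c\,n^{3/2-b}$ in this boundary layer. In the bulk $q=O(1)$ the correction is harmless, of relative order $n^{-b}$; but in the boundary layer $q\sim n^{-1/2}$, where the $\gamma=0$ symbol has size $n^{1/2}$, the flip term has size $n^{3/2-b}$ and is subdominant precisely when $3/2-b<1/2$, that is $b>1$. The technical heart of the argument, which I would isolate as the quantitative estimate proved in Appendix \ref{appc}, is that under $b>1$ the resulting correction to both $\|h_n\|_{2,n}$ and $\|\mc D_n h_n+\tfrac14\bb L f\|_{2,n}$ vanishes as $n\to\infty$; for $b\leqslant 1$ the flip term ceases to be negligible in the boundary layer and the fractional symbol is destroyed, consistently with the expected crossover to diffusive behaviour.
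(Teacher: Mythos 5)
Your proposal is correct in substance and follows the same Fourier--analytic route as the paper: solve \eqref{ec4.22} explicitly, identify the limiting $3/2$-homogeneous symbol, and track the flip correction $-4c\,n^{3/2-b}$. The packaging, however, differs. The paper takes the full two-dimensional discrete Fourier transform, obtaining the closed formula \eqref{ecB.4}, and evaluates all integrals over the relative Fourier variable by the residue theorem; your center-of-mass/relative-coordinate decomposition with geometric solution $\phi_r=\phi_1 X(P)^{r-1}$ is the same algebra in disguise, since your rate $X(P)$ is exactly the root $z_-$ of $z^2-2(1+\gamma_n)z+w=0$ appearing in \eqref{eq:zpm}, i.e.\ the pole that carries the residue. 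Your boundary-layer analysis ($q\sim n^{-1/2}$, unperturbed symbol of size $n^{1/2}$, flip term of size $n^{3/2-b}$, hence negligible iff $b>1$) correctly identifies both the limit symbol, which is the paper's $G_0(y)=\tfrac12|\pi y|^{3/2}\big(1+i\,\mathrm{sgn}(y)\big)$ of \eqref{eq:G0}, and the precise threshold: your criterion $n^{3/2-b}\ll n^{1/2}$ is equivalent to $\gamma_n n\to 0$, which is exactly the condition under which the paper's error terms $\gamma_n^2n^2$ and $\gamma_n^4n^4$ vanish. What your plan defers --- and it is the technical heart --- is upgrading the pointwise boundary-layer asymptotics to a bound valid uniformly over the whole Fourier domain and integrable against $|\cF_n(f)|^2$; in the paper this is Lemma \ref{lem:G0345}, namely
\begin{equation*}
|G_n(y)-G_0(y)|\le C\left[\sin^2(\pi y)+\gamma_n^2\,|\sin(\pi y)|^{-1/2}+\gamma_n\,|\sin(\pi y)|^{1/2}\right],
\end{equation*}
proved by an exact residue computation, which combined with Lemma \ref{lem:sfp} gives \eqref{eq:l32}, while the uniform bound $W(y)\le C|y|^{-3/2}$ gives \eqref{eq:l31}. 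One small refinement: for \eqref{eq:l31} no condition on $b$ is needed at all, since the flip term only increases the denominator and can simply be dropped in the upper bound; the restriction $b>1$ is genuinely used only for \eqref{eq:l32}, exactly where your scaling argument locates it. Since your criterion is the right one, fleshing out the uniform estimate along your lines would reproduce the paper's proof.
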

By \eqref{ec3.10} and \eqref{ec3.15}, we see that
\begin{multline*}
\mc S_T^n(f)= \mc S_0^n(f) + \int_0^T \mc S_t^n\big(-4\mc D_n h_n\big) dt+ 2 \big[Q_0^n(h_n) - Q_T^n(h_n)\big] \\
+ \int_0^T \mc S_t^n\big(\tfrac{1}{\sqrt n}\Delta_n f \big) dt + 4\int_0^T {Q}_t^n \big( \tfrac{1}{\sqrt n}\tilde{\mc D_n}(h_n) \big) dt.
\end{multline*}
Therefore, by the {\em a priori} bound \eqref{ec3.6} and Lemma \ref{l3}, we have that
\begin{equation}
\label{ec4.277}
\mc S_T^n(f) = \mc S_0^n(f) + \int_0^T \mc S_t^n(\bb L f) dt + 4\int_0^T {Q}_t^n \big( \tfrac{1}{\sqrt n}\tilde{\mc D_n}(h_n) \big) dt
\end{equation}
plus an error term which goes to $0$, as $n \to \infty$. It turns out that the {\em a priori} bound \eqref{ec3.6} is not sufficient to show that the last term on the righthand side of  \eqref{ec4.277} goes to $0$ with $n$ since
\begin{equation}
\label{eq:l33}
\tfrac{1}{n^3} \sum_{x \in \bb Z} (\tilde{\mc D}_n h_n)^2 \big(\tfrac{x}{n}, \tfrac{x+1}{n}\big)
\end{equation}
is of order one. Therefore we use again \eqref{ec3.15} applied to $h=v_n$ where $v_n$ is the solution of the Poisson equation
\begin{equation}
\label{eq:poisson-v}
L_n v_n =n^{-1/2} {\tilde {\mc D}}_n h_n.
\end{equation}
Then we have
\begin{equation*}
\int_0^T Q_t^n \big( \tfrac{1}{\sqrt n} {\tilde{\mc D}}_n h_n \big) dt = 2 \int_0^T {\mc S}_t^n ({\mc D}_n v_n) dt -2 \int_{0}^T {Q}_t^n  \big( \tfrac{1}{\sqrt n} {\tilde{\mc D}}_n v_n \big) dt + Q_T^n (v_n) -Q_0^n (v_n).
\end{equation*}
Now, we can use the {\em a priori} bound \eqref{ec3.4} and \eqref{ec3.6}. The following estimates on $v_n$ are proved in Appendix \ref{appc}.
\begin{lemma}
\label{lem:5,7.}
The solution $v_n$ of \eqref{eq:poisson-v} satisfies
\begin{align}
\lim_{n \to \infty} \ & \tfrac{1}{n^2}\sum_{x,y\in\bb Z}v_n^2\big(\tfrac {x}{n},\tfrac{y}{n}\big)=0, \label{est vn} \\
\lim_{n \to \infty} \ &\tfrac{1}{n}\sum_{x\in\bb Z}(\mc D_n v_n)^2\big(\tfrac {x}{n}\big)=0, \label{est der vn} \\
\lim_{n \to \infty} \ &\tfrac{1}{n^3}\sum_{x\in\bb Z}(\widetilde{\mc D}_n v_n)^2\big(\tfrac {x}{n},\tfrac {x+1}{n}\big)=0.\label{est tilde d vn}
\end{align}
In other words,  $ \| v_n \|_{2,n}\ ,$ $\| {\mc D}_n  v_n \|_{2,n}$ and $\big\| \tfrac{1}{\sqrt n} {\widetilde {\mc D}}_n v_n \big\|_{2,n}$ converge to $0$, as $n\to\infty$.
\end{lemma}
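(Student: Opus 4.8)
The plan is to run the whole argument in Fourier space, mirroring the proof of Lemma~\ref{l3} for $h_n$ carried out in Appendix~\ref{appc} following \cite{BGJ}; the only genuinely new difficulty is that the source is now the singular object $n^{-1/2}\widetilde{\mc D}_n h_n$ instead of $\nabla_n f\otimes\delta$. First I would record the symbol of the operator $L_n$ of \eqref{eq:operator}. Writing $\widehat{\,\cdot\,}$ for the two–dimensional discrete Fourier transform on $\frac1n\bb Z^2$ and $u=\xi/n$, $v=\eta/n\in\bb T$ for the rescaled dual variables, the multipliers of $A_n$, $\Delta_n$ and the identity are explicit and give
\begin{equation*}
-\widehat{L_n}(u,v)=4n^{3/2}\big(\sin^2(\pi u)+\sin^2(\pi v)\big)+4c\,n^{3/2-b}-2i\,n^{3/2}\big(\sin(2\pi u)+\sin(2\pi v)\big).
\end{equation*}
The real part is nonnegative, so this yields the fundamental lower bound
\begin{equation*}
\big|\widehat{L_n}(u,v)\big|\ \geqslant\ 4n^{3/2}\big(\sin^2(\pi u)+\sin^2(\pi v)\big)+4c\,n^{3/2-b},
\end{equation*}
which is the only place where the flip intensity $\gamma_n=c\,n^{-b}$ enters, through the constant floor $4c\,n^{3/2-b}$.

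Second, I would solve the two Poisson equations explicitly. From \eqref{eq:poisson-v} we have $\widehat{v_n}=\widehat{L_n}^{-1}\,n^{-1/2}\,\widehat{\widetilde{\mc D}_n h_n}$, and from \eqref{ec4.22} we have $\widehat{h_n}=\widehat{L_n}^{-1}\,\widehat{\nabla_n f\otimes\delta}$. Since $\widetilde{\mc D}_n h_n$ is supported on the off–diagonal strip $\{y=x\pm1\}$, its Fourier transform is a partial trace of $\widehat{h_n}$: it is obtained by integrating $\widehat{h_n}(u,v)$ against the (bounded) multiplier of $\widetilde{\mc D}_n$ over one of the two frequencies. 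Substituting the explicit $\widehat{h_n}$ and passing to the centre and gap frequencies $s=u+v$, $r=u-v$, in which both the source $\widehat{\nabla_n f\otimes\delta}$ and the symbol are explicit (the latter through $\sin^2(\pi u)+\sin^2(\pi v)=1-\cos(\pi s)\cos(\pi r)$), turns $\widehat{v_n}$, and hence each of the three quantities \eqref{est vn}, \eqref{est der vn}, \eqref{est tilde d vn}, into an explicit iterated integral in $(s,r)$, weighted respectively by the multipliers of the identity, of $\mc D_n$ and of $n^{-1/2}\widetilde{\mc D}_n$.

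Third, I would estimate these integrals by Plancherel, exactly as for $h_n$. The decisive regime is that of macroscopic frequencies $\xi,\eta=O(1)$, i.e. $u,v=O(1/n)$, where $\widehat f$ is concentrated: there the transport (imaginary) part of $-\widehat{L_n}$ is of order $n^{1/2}|\xi+\eta|$, the diffusive (real) part is of order $n^{-1/2}(\xi^2+\eta^2)$, and the flip contributes the floor $4c\,n^{3/2-b}$. The argument of \cite{BGJ} goes through provided this floor is dominated by the transport scale, that is $n^{3/2-b}\ll n^{1/2}$, which is exactly the condition $b>1$; then the killing is a subleading perturbation and one recovers the same $3/4$–fractional behaviour as in the $\gamma=0$ case, so that a direct power count gives \eqref{est vn}, \eqref{est der vn} and \eqref{est tilde d vn}. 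I expect the binding step to be the third estimate \eqref{est tilde d vn}: the quantity $\widetilde{\mc D}_n v_n$ is again a singular diagonal trace — recall from \eqref{eq:l33} that the analogous object for $h_n$ is already of order one in this very norm — so one must show that inverting $L_n$ a second time produces enough extra smoothing to absorb the $n^2$ weight carried by $\widetilde{\mc D}_n$. This is also why the crude \emph{a priori} bound \eqref{ec3.6}, which only shows that the source $n^{-1/2}\widetilde{\mc D}_n h_n$ is $O(1)$ rather than $o(1)$, does not suffice, and why the explicit resolvent computation of the previous paragraph is unavoidable.
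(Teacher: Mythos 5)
Your proposal follows essentially the same route as the paper's proof in Appendix \ref{appc}: explicit Fourier inversion of $L_n$ through its symbol $-n^{3/2}\,(\Lambda+4\gamma_n-i\Omega)$, recognition of the sources $n^{-1/2}\widetilde{\mc D}_n h_n$ and $\widetilde{\mc D}_n v_n$ as partial traces over the gap frequency at fixed total frequency (the paper's $w_n$, $\theta_n$ and Lemma \ref{lem:cov}), and Plancherel plus power counting, which the paper implements via the residue-theorem bounds $|I_n(y)|\leqslant C|\sin(\pi y)|^{3/2}$, $|J_n(y)|\leqslant C|\sin(\pi y)|^{-1/2}$, $|K_n(y)|\leqslant C|\sin(\pi y)|^{1/2}$ of Lemma \ref{lem:I} together with $W(y)\leqslant C|y|^{-3/2}$ and Lemma \ref{lem:sfp}. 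The one inaccuracy is your claim that $b>1$ is binding in this step: since $4\gamma_n$ enters only additively in the (nonnegative) real part of the symbol, the bounds of Lemma \ref{lem:I} are uniform in $n$ for any $b>0$ and this lemma needs no such restriction; the condition $b>1$ is actually consumed in the proof of \eqref{eq:l32} of Lemma \ref{l3}, through the comparison of $G_n$ with $G_0$ in Lemma \ref{lem:G0345}.
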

It follows that
\begin{equation}
\label{ec4.27}
\mc S_T^n(f) = \mc S_0^n(f) + \int_0^T \mc S_t^n(\bb L f) dt
\end{equation}
plus an error term which goes to $0$, as $n \to \infty$. Recall that $\{\mc S_t^n; t \in [0,T]\}_{n \in \bb N}$ weakly converges to $\{\mc S_t; t \in [0,T]\}$. The main difficulty is that the operator $\bb L$ is an integro-differential operator with heavy tails (in other words,  even for $f \in \mc C_c^\infty(\bb R)$ the function $\bb L f$ has heavy tails). As a result, we cannot take the limit $n \to \infty$ in \eqref{ec4.27}. Bernardin et al. in \cite{BGJ} achieved the Proposition \ref{prop:unique} after truncature considerations, and results about the Lipschitz property of the function $t \mapsto \mc S_t(f)$. We refer the reader to their paper, and also to \cite{DG} and \cite{J} for useful properties of the fractional Laplacian.

In Appendix \ref{appc}, Lemma \ref{lem:5,7.} and Lemma \ref{l3} are proved. Here, the computations are similar to \cite{BGJ}, but we take into account the additional term due to the presence of the velocity-flip noise, and explain the needed assumption on the parameter $b$.

\section{Volume fluctuations}\label{sec:volume}

Recall that he volume $\sum_{x\in\mathbb{Z}} \omega_x$ is conserved if and only if $\gamma=0$. In this section we give the behavior of the space-time correlation functions of the volume fluctuation field defined for  $f,h\in\mc S(\bb R)$ as in \eqref{st corre of vol}.

\subsection{Explicit computations and Fourier transform}

Contrary to the energy fluctuation field, the computations are explicit. Let us introduce the following notation: \[\mathcal{V}(x,t):= \langle \omega_x(t)\omega_0(0)\rangle_\beta,\] and notice that for all $x \in \Z$ and $t>0$ \[ \frac{d}{dt}\big[ \mathcal{V}(x,t)\big]=\mc V(x+1,t)-\mc V(x-1,t)-2\gamma_n \mc V(x,t)+\lambda \big(\mc V(x+1,t)+\mc V(x-1,t)-2\mc V(x,t)\big).\]
This infinite ODE's system can be rewritten for the Fourier transform $\widehat{\cv}$ as
\[ \frac{d}{dt}\big[\widehat{\cv}(\theta,t)\big] =\big[-2i\sin(2\pi \theta)-2\gamma_n-4\lambda\sin^2(\pi\theta)\big] \widehat{\cv}(\theta,t),\qquad \theta \in \TT,\, t>0.\]
Since $\widehat\cv(\theta,0)=\beta^{-1}$ for all $\theta \in \TT$, we conclude that
\[ \widehat{\cv}(\theta,t)=\beta^{-1} e^{ t \left[-2i\sin(2\pi \theta)-2\gamma_n-4\lambda\sin^2(\pi\theta)\right]}.\]
We assume from now on that $\beta=1$. The inverse Fourier transform gives
 \begin{align}\eta_t^n(f,h)&=\tfrac{1}{n} \sum_{y,z\in\Z}f\big(\tfrac{y+z}{n}\big)h\big(\tfrac{y}{n}\big)\int_{\big[-\frac{1}{2},\frac{1}{2}\big]} \widehat\cv(\theta,tn^a)e^{-2i\pi \theta z} \, d\theta \notag\\
& = \tfrac{1}{n} \sum_{y,z\in\Z}f\big(\tfrac{y+z}{n}\big)h\big(\tfrac{y}{n}\big)\int_{\big[-\frac{1}{2},\frac{1}{2}\big]}  e^{ tn^a \left[-2i\sin(2\pi \theta)-2\gamma_n-4\lambda\sin^2(\pi\theta)\right]-2i\pi \theta z}\, d\theta \label{eq:eta} \\
& =  \tfrac{1}{n} \sum_{y\in\Z} \left\{h\big(\tfrac{y}{n}\big) \times \tfrac{1}{n} \sum_{z \in \Z}f\big(\tfrac{y+z}{n}\big)\int_{\big[-\frac{n}{2},\frac{n}{2}\big]} e^{ tn^a \left[-2i\sin\big(2\pi \frac{\xi}n\big)-2\gamma_n-4\lambda\sin^2\big(\pi\frac{\xi}{n}\big)\right]-2i\pi \frac{\xi}{n} z}\, d\xi \right\}.\notag
\end{align}
\paragraph{\bf Case $a>2$ or $b<a$ } We are now going to study the convergence of the quantity  $\cj_n(t)e^{-2tn^a\gamma_n}$ where
\[\cj_n(t):= \tfrac{1}{n} \sum_{z \in \Z}f\big(\tfrac{y+z}{n}\big)\int_{\big[-\frac{n}{2},\frac{n}{2}\big]} e^{tn^a \left[-2i\sin(2\pi \frac{\xi}n)-4\lambda\sin^2(\pi\frac\xi n)\right]-2i\pi \frac{\xi}n z}\, d\xi. \]
The modulus of the integrand in the above integral equals $ e^{-4 \lambda tn^a\sin^2(\frac{\pi\xi}n)},$ and it can be easily proved that  \[\int_{\big[-\frac{n}{2},\frac{n}{2}\big]}  e^{-4 \lambda tn^a\sin^2(\frac{\pi\xi}n)} d\xi \quad \begin{cases}\xrightarrow[n\to\infty]{} 0 & \text{ if } a>2,\\
\xrightarrow[n\to\infty]{} \displaystyle \int_\R e^{-4\lambda t \pi^2 \xi^2} d\xi & \text{ if } a=2, \\
\text{ is bounded by } n & \text{ if } a<2.\end{cases} \]
The first two statements are consequences of the dominated convergence theorem. Therefore, from  the fact that $\sup_n\big\{\frac{1}{n}\sum_{z\in\Z} \vert f\big(\frac{z}n\big) \vert\big\} <+\infty$,  we conclude: \begin{enumerate}[(i)]
\item If $a>2$, then $\eta_t^n(f,h)$ vanishes as $n\to \infty$ for any $b$ (because of the above integral),
\item If $a\leqslant 2$ and $b<a$, then $\eta_t^n(f,h)$ vanishes as $n \to \infty$ (because of  $e^{-2tn^a\gamma_n}=e^{-2t c\, n^{a-b}}$).
\end{enumerate}
This proves the statements $A(iii)$, $B(v)$ and $C(v)$ of Theorem  \ref{theo:volume}. In the following we distinguish the cases $a\leqslant 1$ and $a>1$.

\paragraph{\bf Case  $a\leqslant 1$ and $b\geqslant a$ }  Let us introduce the following notation:
\begin{equation}\label{qn}
q_n(\theta):=n^a\big[-2i\sin(2\pi\theta)-2\gamma_n-4\lambda\sin^2(\pi\theta)\big].
\end{equation}
From \eqref{eq:eta}, we get
\begin{equation} \eta_t^n(f,h)=\int_{\big[-\frac{n}{2},\frac{n}{2}\big]} \widehat\cv\big(\tfrac{\xi}n,tn^a\big) \cF_n(f)(\xi) \cF_n(h)(\xi)\, d\xi \label{eq:etafh}\end{equation}
and
\begin{align}\frac{d}{dt}\big[\eta_t^n(f,h)\big]&=\tfrac{1}{n} \sum_{y,z\in\Z}f\big(\tfrac{y+z}{n}\big)h\big(\tfrac{y}{n}\big)\int_{\big[-\frac{1}{2},\frac{1}{2}\big]} q_n(\theta)\widehat\cv(\theta,tn^a)e^{-2i\pi \theta z} \, d\theta \notag\\
& = \tfrac{1}{n} \sum_{y\in\Z} \left\{h\big(\tfrac{y}{n}\big) \times \tfrac{1}{n} \sum_{z \in \Z}f\big(\tfrac{y+z}{n}\big)\int_{\big[-\frac{n}{2},\frac{n}{2}\big]}q_n\big(\tfrac{\xi}{n}\big)\widehat\cv\big(\tfrac{\xi}{n},tn^a\big)e^{-2i\pi \frac{\xi}{n}z} \, d\xi\right\} \notag\\
& =  \int_{\big[-\frac{n}{2},\frac{n}{2}\big]} q_n\big(\tfrac{\xi}n\big)\widehat\cv\big(\tfrac{\xi}{n},tn^a\big) \cF_n(f)(\xi) \cF_n(h)(\xi)\, d\xi, \label{eq:derivee_eta}
\end{align}
%
%
Let us write down the lemma that we are going to prove.

\begin{lemma}
 \emph{\bf Proof of  $A(ii)$. } If $b\leqslant 1$ and $b=a$, then
\begin{equation}
\int_{\big[-\frac{n}{2},\frac{n}{2}\big]} \left\vert q_n\big(\tfrac{\xi}n\big) \cF_n(f)(\xi)-  \cF_n(\mathbf{1}_{b=1}\times 2f'-2cf)(\xi)\right\vert\,    \left\vert \widehat\cv\big(\tfrac{\xi}{n},tn^a\big)\cF_n(h)(\xi)\right\vert\, d\xi \xrightarrow[n\to\infty]{}0.
\end{equation}

\emph{\bf Proof of $A(i)$ $B(i)$ and $C(i)$. } If $a\in (0,1)$ and $a<b$,
\begin{equation}
\left\vert\int_{\big[-\frac{n}{2},\frac{n}{2}\big]} q_n\big(\tfrac{\xi}n\big)\widehat\cv\big(\tfrac{\xi}{n},tn^a\big) \cF_n(f)(\xi) \cF_n(h)(\xi)\, d\xi\right\vert  \xrightarrow[n\to\infty]{}0.
\end{equation}

\emph{\bf Proof of $B(iii)$ and $C(iii)$. } If $a=1$ and $b > 1$, then
 \begin{equation}
\int_{\big[-\frac{n}{2},\frac{n}{2}\big]} \left\vert q_n\big(\tfrac{\xi}n\big) \cF_n(f)(\xi)-  \cF_n(2f')(\xi)\right\vert^2\,    \left\vert\widehat\cv\big(\tfrac{\xi}{n},tn^a\big) \cF_n(h)(\xi) \right\vert^2\, d\xi \xrightarrow[n\to\infty]{}0.
\end{equation}

\end{lemma}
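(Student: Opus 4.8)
\emph{Overall strategy.} The three claims all live on the Fourier side, and the plan is to treat them by a single mechanism: each is an integral over $[-\tfrac n2,\tfrac n2]$ whose integrand I will show tends to $0$ by dominated convergence. The common uniform ingredient is the elementary bound $|\widehat\cv(\theta,tn^a)|=e^{-tn^a(2\gamma_n+4\lambda\sin^2(\pi\theta))}\le 1$, which is immediate from the explicit formula for $\widehat\cv$ because the real part of the exponent is nonpositive. First I would compute the pointwise limit, for each fixed $\xi$, of the multiplier $q_n(\tfrac{\xi}{n})$ introduced in \eqref{qn}; then I would bound the full integrands by a fixed integrable function and invoke Lebesgue's theorem.

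\emph{Pointwise limits.} Splitting $q_n(\tfrac{\xi}{n})$ into its drift, flip and exchange parts and Taylor expanding, one has $n^a\sin(2\pi\tfrac{\xi}{n})\sim 2\pi\xi\,n^{a-1}$, $n^a\gamma_n=c\,n^{a-b}$ and $n^a\sin^2(\pi\tfrac{\xi}{n})\sim\pi^2\xi^2\,n^{a-2}$, so that these three pieces behave like $-4\pi i\xi\,n^{a-1}$, $-2c\,n^{a-b}$ and $-4\lambda\pi^2\xi^2\,n^{a-2}$. For $A(ii)$, where $a=b\le 1$, the exchange part vanishes (as $a<2$), the flip part converges to the constant $-2c$, and the drift part converges to $-4\pi i\xi$ if $a=1$ and to $0$ if $a<1$; since $a=b$ the indicators satisfy $\mathbf 1_{a=1}=\mathbf 1_{b=1}$. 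Using $\cF_n(g)(\xi)\to(\mc F g)(\xi)$ together with $(\mc F f')(\xi)=-2i\pi\xi\,(\mc F f)(\xi)$, the resulting limit $\big[\mathbf 1_{b=1}(-4\pi i\xi)-2c\big](\mc F f)(\xi)$ is exactly $\lim_n\cF_n(\mathbf 1_{b=1}\times 2f'-2cf)(\xi)$, so the bracketed difference in $A(ii)$ tends to $0$ for every $\xi$. For $B(iii)$ and $C(iii)$, where $a=1$ and $b>1$, the flip part $-2c\,n^{1-b}$ and the exchange part both vanish, leaving only $q_n(\tfrac{\xi}{n})(\mc F f)(\xi)\to-4\pi i\xi\,(\mc F f)(\xi)=\lim_n\cF_n(2f')(\xi)$, so the squared difference again tends to $0$ pointwise. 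Finally, for $A(i)$, $B(i)$, $C(i)$, where $a\in(0,1)$ and $a<b$, the three exponents $a-1$, $a-b$, $a-2$ are all strictly negative, hence $q_n(\tfrac{\xi}{n})\to 0$ and the whole integrand vanishes pointwise.

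\emph{Domination and conclusion.} Under the standing hypotheses $a\le 1$ and $a-b\le 0$, each of $n^{a-1}$, $n^{a-b}$, $n^{a-2}$ is at most $1$ on $[-\tfrac n2,\tfrac n2]$, so $|q_n(\tfrac{\xi}{n})|\le 2\pi|\xi|+2c+\pi^2\xi^2\le C(1+\xi^2)$ uniformly in $n$. Combining this with the uniform Schwartz decay $\sup_n(1+\xi^2)^2|\cF_n(g)(\xi)|\le C_g$ for $g\in\mc S(\R)$ provided by Lemma \ref{lem:sfp}, applied to $f$, $h$ and to $2f'-2cf$, the $A(ii)$ integrand is dominated by $C(1+\xi^2)^{-3}\mathbf 1_{\{|\xi|\le n/2\}}$ and the squared $B(iii)$/$C(iii)$ integrand by $C(1+\xi^2)^{-2}\mathbf 1_{\{|\xi|\le n/2\}}$, both integrable and $n$-independent; recall that $|\widehat\cv|\le 1$ absorbs that factor. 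Since the integrands converge to $0$ pointwise, dominated convergence yields the three limits, the cases $A(i)$, $B(i)$, $C(i)$ following identically with the multiplier itself vanishing pointwise. Alternatively, mirroring the proof of the \emph{Diffusive behavior} lemma, one may split into $|\xi|\le A$ and $A\le|\xi|\le n/2$, handling the tail by the uniform bound of Lemma \ref{lem:sfp} and the compact part by pointwise convergence.

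\emph{Main obstacle.} The Taylor bookkeeping of which terms survive in each window of $(a,b)$ is routine; the delicate point I would take care with is securing genuinely \emph{uniform-in-$n$} decay of $\cF_n(f)$, $\cF_n(h)$ and $\cF_n(f')$ on $[-\tfrac n2,\tfrac n2]$, so that the dominating functions are independent of $n$, and checking that replacing $\cF_n$ by the continuous $\mc F$ in identifying the limit reproduces the exact coefficients $2$ and $-2c$ appearing in the claimed generators.
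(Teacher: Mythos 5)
Your proof is correct, and it rests on the same two pillars as the paper's own argument: the Taylor expansion of the Fourier multiplier $q_n$ in each window of $(a,b)$, and the uniform decay $|\cF_n(g)(\xi)|\le C(1+|\xi|^p)^{-1}$ on $\big[-\tfrac n2,\tfrac n2\big]$ supplied by Lemma \ref{lem:sfp}. Where you genuinely differ is in how the limit is closed. The paper first uses the bound \eqref{eq:bound} to discard the factor $\widehat\cv\,\cF_n(h)$ altogether, then replaces $\cF_n(2f'-2cf)$ by the continuous transform $\cF(2f'-2cf)$ via Corollary \ref{cor:fourier}, and finally exploits the quantitative estimate $\big|q_n\big(\tfrac{\xi}{n}\big)+4i\pi\xi+2c\big|\le C\xi^2$ (in fact $C\xi^2/n$), concluding by the same compact-plus-tail splitting already used for the energy field around \eqref{eq:quest2}. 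You instead keep everything inside a single integral and invoke dominated convergence: $|\widehat\cv|\le 1$, the uniform bound $\big|q_n\big(\tfrac{\xi}{n}\big)\big|\le C(1+\xi^2)$ (valid precisely because $a\le 1$ and $a\le b$ in all three cases), and Lemma \ref{lem:sfp} produce an $n$-independent integrable dominating function, while the integrand vanishes pointwise. Your route is more economical: it treats the three cases by one mechanism and avoids the $L^2$ statement of Corollary \ref{cor:fourier}; the paper's route yields an explicit rate of convergence and recycles verbatim the machinery of the diffusive-behavior lemma. The one step you should make explicit is the pointwise convergence $\cF_n(g)(\xi)\to(\mc F g)(\xi)$ for fixed $\xi$ and Schwartz $g$: Corollary \ref{cor:fourier} as stated only gives weighted $L^2$ convergence, but the pointwise statement follows in one line from Riemann-sum convergence (or from Poisson summation, which gives $\cF_n(g)(\xi)=\sum_{k\in\Z}\mc F g(\xi+kn)$), so this is a small addition rather than a gap.
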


\begin{proof}
We are going to use some powerful results regarding the Fourier transform, and the argument will be repeated many times.

First, we need the following easy-to-prove fact: for any $a\leqslant 2$ and any  $h\in\mc S(\bb R)$, there exists a constant $C>0$ such that, for any $n$ and any $\xi \in \big[-\frac{n}{2},\frac{n}{2}\big]$,
\begin{equation}
\left\vert\cF_n(h)(\xi)\widehat\cv\big(\tfrac{\xi}{n},tn^a\big)\right\vert \leqslant C. \label{eq:bound}
\end{equation}
Let us prove for instance the first point $C(iii)$ with $b=a=1$. The other statements can be obtained with the same procedure. From \eqref{eq:etafh} and \eqref{eq:derivee_eta}, we can write
\begin{align*}
\big\vert\partial_t\eta_t^n(f,h)&-\eta_t^n(2f'-2cf,h)\big\vert\\
 &\leqslant \int_{\big[-\frac{n}{2},\frac{n}{2}\big]} \left\vert q_n\big(\tfrac{\xi}n\big) \cF_n(f)(\xi)-   \cF_n( 2f'-2cf)(\xi)\right\vert\,    \left\vert\cF_n(h)(\xi) \widehat\cv\big(\tfrac{\xi}{n},tn^a\big)\right\vert\, d\xi\\
& \leqslant C \int_{\big[-\frac{n}{2},\frac{n}{2}\big]} \left\vert q_n\big(\tfrac{\xi}n\big) \cF_n(f)(\xi)-   \cF_n(2f'-2cf)(\xi)\right\vert\,  d\xi.
\end{align*}
Then, the strategy is to replace $\cF_n(2f'-2cf)(\xi)$ with
\[\cF(2f'-2cf)(\xi)=(-4i\pi\xi-2c)(\cF f)(\xi),\]
 and this can be done thanks to Corollary \ref{cor:fourier}. We write then
\begin{multline*}
\int_{\big[-\frac{n}{2},\frac{n}{2}\big]} \left\vert q_n\big(\tfrac{\xi}n\big) \cF_n(f)(\xi)-  ( -4i\pi\xi-2c)\cF(f)(\xi)\right\vert\,  d\xi \\
\leqslant \int_{\big[-\frac{n}{2},\frac{n}{2}\big]} \left\vert q_n\big(\tfrac{\xi}n\big)+4i\pi\xi+2c\right\vert \big\vert \cF_n(f)(\xi) \big\vert \, d\xi +\int_{\big[-\frac{n}{2},\frac{n}{2}\big]} (4\pi\xi+2c) \big\vert \cF_n(f)(\xi)-(\cF f)(\xi)\big\vert \, d\xi
\end{multline*}
Again, from Corollary \ref{cor:fourier}, the last term of the RHS goes to 0, as $n \to \infty$. We are reduced to show that
\[ \lim_{n\to\infty} \int_{\big[-\frac{n}{2},\frac{n}{2}\big]} \left\vert q_n\big(\tfrac{\xi}n\big)+4i\pi\xi+2c\right\vert \big\vert \cF_n(f)(\xi) \big\vert \, d\xi  = 0.\]
It is now easy to see that if $a=b=1$, then there exists a constant $C>0$ such that
\[
\left\vert q_n\big(\tfrac{\xi}n\big)+4i\pi\xi+2c\right\vert  = \left\vert 2i \big[ 2\pi\xi - n\sin\big(2\pi\tfrac{\xi}{n}\big)\big] - 4\lambda n \sin^2\big(\pi\tfrac{\xi}{n}\big)\right\vert\leqslant C\xi^2.\]
Indeed, this is a consequence of the following fact:
$ \forall \, x \in [-\pi,\pi], $ $ \sin(x)-x \vert \leqslant \frac{\vert x \vert^3}{3}.$
We recover the same type of inequality as \eqref{eq:quest2}, and the conclusion comes straightforwardly.
\end{proof}

\paragraph{\bf Case $1<a\leqslant 2$ and $b\geqslant a$ } All the previous computations can be adapted to the translated field $\widetilde\eta_t^n$, whose definition is recalled below:
\[\widetilde\eta_t^n(f,h)=\tfrac{1}{n}\sum_{x,y\in \ZZ} f\big(\tfrac{x-2tn^a}{n}\big)h\big(\tfrac{y}{n}\big) \left\langle \omega_x(tn^a)\omega_y(0)\right\rangle_\beta.\]
The translation gives rise to a multiplicative term in the computation of the Fourier transform. In the same way, we define
$\widetilde q_n(\theta):=q_n(\theta)+n^a4i\pi\theta,$ where $q_n(\theta)$ was given in \eqref{qn}
and we can easily write
\begin{align*}
\widetilde\eta_t^n(f,h)&=\int_{\big[-\frac{n}{2},\frac{n}{2}\big]} \widehat\cv\big(\tfrac{\xi}n,tn^a\big) e^{4i\pi tn^a \frac{\xi}{n}}\cF_n(f)(\xi) \cF_n(h)(\xi)\, d\xi,\\
\frac{d}{dt}\big[\widetilde\eta_t^n(f,h)\big]&=\int_{\big[-\frac{n}{2},\frac{n}{2}\big]} \widetilde q_n\big(\tfrac{\xi}n\big)\widehat\cv\big(\tfrac{\xi}{n},tn^a\big) e^{4i\pi tn^a \frac{\xi}{n}} \cF_n(f)(\xi) \cF_n(h)(\xi)\, d\xi.
\end{align*}

\begin{lemma}

\emph{\bf Proof of $B(ii)$ and $C(ii)$. } If $a\in (1,2)$ and $a<b$,
\begin{equation}
\left\vert\int_{\big[-\frac{n}{2},\frac{n}{2}\big]} \widetilde q_n\big(\tfrac{\xi}n\big)\widehat\cv\big(\tfrac{\xi}{n},tn^a\big) e^{4i\pi tn^a \frac{\xi}{n}} \cF_n(f)(\xi) \cF_n(h)(\xi)\, d\xi\right\vert  \xrightarrow[n\to\infty]{}0.
\end{equation}

\emph{\bf Proof of $B(iv)$. } If $a\in (1,2)$ and $a=b$,
\begin{equation}
\int_{\big[-\frac{n}{2},\frac{n}{2}\big]} \left\vert \widetilde q_n\big(\tfrac{\xi}n\big) \cF_n(f)(\xi)-  \cF_n(-2cf)(\xi)\right\vert\,   \left\vert e^{4i\pi tn^a \frac{\xi}{n}} \widehat\cv\big(\tfrac{\xi}{n},tn^a\big) \cF_n(h)(\xi) \right\vert\, d\xi \xrightarrow[n\to\infty]{}0.
\end{equation}

\emph{\bf Proof of $C(iii)$. } If $a=2$ and $b\geqslant 2$, then
\begin{multline*}
\int_{\big[-\frac{n}{2},\frac{n}{2}\big]} \left\vert \widetilde q_n\big(\tfrac{\xi}n\big) \cF_n(f)(\xi)-  \cF_n(\lambda f''-\mathbf{1}_{b=2}\times 2cf)(\xi)\right\vert^2\,    \left\vert e^{4i\pi tn^a \frac{\xi}{n}} \widehat\cv\big(\tfrac{\xi}{n},tn^a\big)\cF_n(h)(\xi)\right\vert^2\, d\xi \\
\xrightarrow[n\to\infty]{}0.
\end{multline*}

\end{lemma}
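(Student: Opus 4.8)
The plan is to treat the three statements by the scheme already used for the ``Diffusive behavior'' lemma and for the case $a\leqslant 1$ above: bound the oscillatory factors by a constant, reduce everything to an estimate on the symbol $\widetilde q_n$, and replace discrete Fourier transforms by continuous ones via Corollary~\ref{cor:fourier}. The starting point is the uniform bound \eqref{eq:bound} together with $\big|e^{4i\pi tn^a\xi/n}\big|=1$, which lets me factor out $\widehat\cv(\tfrac{\xi}{n},tn^a)$, $\cF_n(h)$ and the exponential at the cost of a constant $C>0$. First I would record, for $\xi\in[-\tfrac n2,\tfrac n2]$, the explicit decomposition
\begin{equation*}
\widetilde q_n\big(\tfrac{\xi}{n}\big)=2in^{a-1}\big[2\pi\xi-n\sin\big(2\pi\tfrac{\xi}{n}\big)\big]-2cn^{a-b}-4\lambda n^a\sin^2\big(\pi\tfrac{\xi}{n}\big),
\end{equation*}
obtained from \eqref{qn} and $\widetilde q_n=q_n+n^a4i\pi\theta$, which isolates a transport-compensated imaginary part, a relaxation part and a diffusion part. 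The elementary inequality $|\sin x-x|\leqslant|x|^3/3$ bounds the first term by $C|\xi|^3 n^{a-3}$, while $\sin^2(\pi\xi/n)\leqslant\pi^2\xi^2/n^2$ bounds the third by $C\xi^2 n^{a-2}$.

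For the cases $a\in(1,2),\,a<b$ (statements $B(ii)$ and $C(ii)$) and $a=b\in(1,2)$ (statement $B(iv)$), all three pieces are negligible. Since $a<2<3$, the imaginary and diffusion parts carry vanishing prefactors $n^{a-3}$ and $n^{a-2}$. The relaxation part $-2cn^{a-b}$ tends to $0$ when $a<b$, whereas when $a=b$ it equals exactly $-2c$ and cancels against the target symbol $\cF_n(-2cf)=-2c\,\cF_n(f)$ of $B(iv)$, so that in every case the relaxation contribution is negligible. I am then left with an integral bounded by $C\int_{[-n/2,n/2]}\big(|\xi|^3 n^{a-3}+\xi^2 n^{a-2}+n^{a-b}\mathbf 1_{a<b}\big)\,|\cF_n(f)(\xi)|\,d\xi$, and the uniform control of the moments $\int|\xi|^j|\cF_n(f)|\,d\xi$ provided by Lemma~\ref{lem:sfp} for $f\in\mc S(\bb R)$ makes it vanish as $n\to\infty$.

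The case $a=2,\,b\geqslant2$ (statement $C(iii)$) is the genuinely non-trivial one, because now the diffusion part must be \emph{kept} and identified with $\lambda f''$. The imaginary part is still $O(|\xi|^3/n)$ and disappears, and the relaxation part $-2cn^{2-b}$ either vanishes (when $b>2$, consistently with $\mathbf 1_{b=2}=0$) or equals $-2c$ (when $b=2$, matching the target term $-\mathbf 1_{b=2}\times 2c\,\cF_n(f)$). For the diffusion part I would use the exact identity $-4n^2\sin^2(\pi\tfrac{\xi}{n})\,\cF_n(f)(\xi)=\cF_n(\Delta_n f)(\xi)$ for the discrete Laplacian, so that the diffusion symbol equals $\lambda\,\cF_n(\Delta_n f)(\xi)$; since $\Delta_n f\to f''$, replacing both $\cF_n(\Delta_n f)$ and $\cF_n(f'')$ by $\cF(f'')(\xi)=-4\pi^2\xi^2\cF(f)(\xi)$ through Corollary~\ref{cor:fourier} shows their difference is negligible in $\mathbf L^2$. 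Squaring the symbol difference and using the triangle inequality, the proof reduces to controlling $n^{-2}\int\xi^6|\cF_n(f)|^2$ and $n^{2(2-b)}\int|\cF_n(f)|^2$ by the tail estimates of Lemma~\ref{lem:sfp}, exactly as in the estimate \eqref{eq:quest2}.

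The main obstacle is precisely this $a=2$ diffusion term: one must check that, once the transport is compensated inside $\widetilde q_n$, the surviving real part of the symbol reproduces exactly $\lambda\partial_u^2$ with no spurious coefficient, which is the delicate symbol/Fourier-replacement step of the ``Diffusive behavior'' lemma transported to the volume field. The subdiffusive cases, by contrast, are routine once the decomposition above is written down.
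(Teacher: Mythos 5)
Your proposal is correct and follows essentially the same route as the paper: both reduce the three statements to symbol estimates on $\widetilde q_n$ after factoring out the uniformly bounded term $\cF_n(h)(\xi)\,\widehat\cv\big(\tfrac{\xi}{n},tn^a\big)\,e^{4i\pi tn^a \xi/n}$ via \eqref{eq:bound}, and both rely on the elementary inequality $\vert \sin x - x\vert \leqslant \vert x\vert^3/3$ together with Lemma \ref{lem:sfp} and Corollary \ref{cor:fourier} to handle the replacement of discrete Fourier transforms by continuous ones. Your explicit three-term decomposition of $\widetilde q_n$ (transport-compensated, relaxation, diffusion) and the identity $\lambda\cF_n(\Delta_n f)(\xi)=-4\lambda n^2\sin^2\big(\pi\tfrac{\xi}{n}\big)\cF_n(f)(\xi)$ simply make explicit, through the vanishing prefactors $n^{a-3}$, $n^{a-2}$, $n^{a-b}$ multiplied against uniformly bounded moment integrals, what the paper accomplishes by bounding $\big\vert \widetilde q_n\big(\tfrac{\xi}{n}\big)+4\pi^2\lambda\xi^2\big\vert\leqslant C\xi^2$ and invoking the same type of argument as around \eqref{eq:quest2}.
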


\begin{proof}
We repeat he same arguments as before: first, for any $a\leqslant 2$ and any  $h\in\mc S(\bb R)$, there exists a constant $C>0$ such that, for any $n$ and any $\xi \in \big[-\frac{n}{2},\frac{n}{2}\big]$,
\begin{equation}
\left\vert\cF_n(h)(\xi)\widehat\cv\big(\tfrac{\xi}{n},tn^a\big)e^{4i\pi tn^a \frac{\xi}{n}}\right\vert \leqslant C. \label{eq:bound}
\end{equation}
Let us prove for instance the last point $C(iii)$ with $b>2$ and $a=2$. The other statements can be obtained with the same procedure. From \eqref{eq:etafh} and \eqref{eq:derivee_eta}, we can write
\begin{align*}
\big\vert\partial_t\widetilde\eta_t^n(f,h)& -\lambda\widetilde\eta_t^n(f'',h)\big\vert \\
& \leqslant \int_{\big[-\frac{n}{2},\frac{n}{2}\big]} \left\vert \widetilde q_n\big(\tfrac{\xi}n\big) \cF_n(f)(\xi)-  \cF_n(\lambda f'')(\xi)\right\vert\,    \left\vert e^{4i\pi tn^a \frac{\xi}{n}}\cF_n(h)(\xi) \widehat\cv\big(\tfrac{\xi}{n},tn^a\big)\right\vert\, d\xi\\
& \leqslant C \int_{\big[-\frac{n}{2},\frac{n}{2}\big]} \left\vert \widetilde q_n\big(\tfrac{\xi}n\big) \cF_n(f)(\xi)-  \cF_n(\lambda f'')(\xi)\right\vert\,  d\xi.
\end{align*}
As before, we are going to replace $\cF_n(f'')(\xi)$ with $\cF(f'')(\xi)=-4\pi^2\vert\xi\vert^2(\cF f)(\xi)$,  thanks to Corollary \ref{cor:fourier}. We write then
\begin{multline*}
\int_{\big[-\frac{n}{2},\frac{n}{2}\big]} \left\vert \widetilde q_n\big(\tfrac{\xi}n\big) \cF_n(f)(\xi)+  4\pi^2\lambda\xi^2\cF(f)(\xi)\right\vert\,  d\xi \\
\leqslant \int_{\big[-\frac{n}{2},\frac{n}{2}\big]} \left\vert \widetilde q_n\big(\tfrac{\xi}n\big)+4\pi^2\lambda\xi^2\right\vert \big\vert \cF_n(f)(\xi) \big\vert \, d\xi + 4\pi^2\lambda\int_{\big[-\frac{n}{2},\frac{n}{2}\big]} \xi^2 \big\vert \cF_n(f)(\xi)-(\cF f)(\xi)\big\vert \, d\xi.
\end{multline*}
Again, from Corollary \ref{cor:fourier}, the last term of the RHS goes to 0, as $n \to \infty$. We are reduced to show that
\[ \lim_{n\to\infty} \int_{\big[-\frac{n}{2},\frac{n}{2}\big]} \left\vert \widetilde q_n\big(\tfrac{\xi}n\big)+4\pi^2\lambda\xi^2\right\vert \big\vert \cF_n(f)(\xi) \big\vert \, d\xi  = 0.\]
Now, if $a=2$ and $b>2$, then there exists a constant $C>0$ such that
$\left\vert \widetilde q_n\big(\tfrac{\xi}n\big)+4\pi^2\lambda\xi^2\right\vert\leqslant C\xi^2,$
and the conclusion follows.
\end{proof}

\section*{Acknowledgements}

PG thanks CNPq (Brazil) for support through the research project ``Additive functionals of particle systems" 480431/2013-2. PG thanks  CMAT for support by ``FEDER" through the
``Programa Operacional Factores de Competitividade  COMPETE" and by
FCT through the project PEst-C/MAT/UI0013/2011. 
This work has been supported by the Brazilian-French Network in Mathematics, the French Ministry of Education through the grant ANR (EDNHS) and the fellowship L'Or\'eal-France UNESCO ``Pour les Femmes et la Science''.

\appendix

\section{Fourier analysis}
\label{sec:fourier}
In this article Fourier analysis is one of the most important tools. Actually, the Fourier transform is very useful, since it is reversible, being able to transform from one domain to the other. In the case of a periodic function, the Fourier transform can be simplified to the calculation of series coefficients. Also, when the domain is a lattice, it is still possible to recreate a version of the original Fourier transform according to the Poisson summation formula, also known as discrete-time Fourier transform.

\begin{enumerate}
\item \textbf{Fourier transform of integrable functions -- } If $f: \R \to \R$ is an integrable function, we define its Fourier transform $\mathcal{F} f:\R\to\C$ as
\begin{equation}
\label{fou tranf cont}
 \mathcal{F}f(\xi):=\int_\R f(x) e^{2i\pi\xi x} \, d x, \qquad \xi \in \R.\end{equation}

\item \textbf{Fourier transform of square summable sequences -- } If $h:\Z\to\R$ is square summable, we define its Fourier transform $\widehat{h}:\TT\to\C$ in $\mathbf{L}^2(\TT)$ as
\begin{equation}
\label{fou tranf}
 \widehat{h}(\theta):=\sum_{x\in\Z} h(x) e^{2i\pi \theta x}, \qquad \theta \in \TT.\end{equation}

\item\label{FF3} \textbf{Discrete Fourier transform of integrable functions -- } If $g: \R \to \R$ is an integrable function, we define its Fourier transform $\cF_n(g):\R\to\C$ as
\begin{equation*}
\cF_n(g) (\xi) = \frac{1}{n} \sum_{x \in \Z} g \Big(\frac{x}{n}\Big) e^{2i \pi x \tfrac{\xi}{n}}, \qquad \xi\in \R.
\end{equation*}
\end{enumerate}

These definitions can easily be extended for $d$-dimensional spaces, $d\geqslant 1$. For each Fourier transform, we have the \emph{Parseval-Plancherel's identity} between suitable norms of the involved spaces, and we also can recover the initial functions for the knowledge of their Fourier transforms by the \emph{inverse Fourier transform}.

For example, for the Fourier transform \eqref{FF3},  the Parseval-Plancherel's identity reads as
\begin{equation*}
\| g \|_{2,n}^2 := \frac{1}{n} \sum_{x \in \Z} \Big| g \Big(\frac{x}{n}\Big)\Big|^2 = \int_{\big[-\tfrac{n}{2}, \tfrac{n}{2}\big]}\;  \left| \cF_n(g) (\xi) \right|^2 \, d\xi \; =: \| {\cF_n(g)} \|^2.
\end{equation*}
The function $g$ can be recovered from the knowledge of its Fourier transform by the inverse Fourier transform of $\cF_n(g)$:
\begin{equation*}
g \Big(\frac{x}{n}\Big) = \int_{\big[-\tfrac{n}{2}, \tfrac{n}{2}\big]}\;\cF_n(g) (\xi)\;  e^{-2i \pi x \tfrac{\xi}{n}} \,  d\xi .
\end{equation*}
Now we give the properties that we need.

\begin{lemma}
\label{lem:sfp}
If $g \in {\mc S} (\R)$, then for any $p\geqslant 1$, there exists a constant $C:=C(p,f)$ such that for any $|y| \le 1/2$,
\begin{equation*}
| \cF_n(g) (ny) | \le \frac{C}{1+ (n|y|)^{p}}.
\end{equation*}
\end{lemma}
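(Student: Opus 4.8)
The plan is to read $\mathcal{F}_n(g)(ny) = \tfrac1n\sum_{x\in\mathbb Z} g\big(\tfrac xn\big)\, e^{2i\pi xy}$ as a discrete oscillatory Riemann sum and to extract the polynomial decay in $n|y|$ from the smoothness of $g$, in the same spirit in which smoothness of a Schwartz function produces rapid decay of its Fourier transform. Two bounds will be combined at the very end: the trivial uniform bound
\[
\big|\mathcal F_n(g)(ny)\big| \le \tfrac1n\sum_{x\in\mathbb Z}\big|g\big(\tfrac xn\big)\big| \le C_0,
\]
valid for all $n$ because $g\in\mathcal S(\mathbb R)$, and a decaying bound of the form $C_p (n|y|)^{-p}$. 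Patching these two regimes will give the claimed estimate.

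First I would perform $p$ successive summations by parts in the variable $x$. Writing $a_x:=g(x/n)$ and $S:=\sum_x a_x e^{2i\pi xy}$, a shift of the summation index yields the identity
\[
(e^{2i\pi y}-1)\,S = \sum_{x\in\mathbb Z} (a_{x-1}-a_x)\, e^{2i\pi xy}.
\]
Iterating this $p$ times transfers the oscillatory weight onto the $p$-th finite difference of the sequence $(a_x)$, at the cost of a factor $(e^{2i\pi y}-1)^{-p}$. On the strip $|y|\le 1/2$ one has the elementary lower bound $|e^{2i\pi y}-1| = 2|\sin(\pi y)| \ge 4|y|$, the last inequality following from the concavity of $\sin$ on $[0,\pi/2]$; hence this prefactor is controlled by $(4|y|)^{-p}$.

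Next I would estimate the $p$-th finite difference by smoothness. Using the integral representation of finite differences with step $h=1/n$,
\[
\Big|\sum_{j=0}^p \binom{p}{j}(-1)^{j}\, g\big(\tfrac{x-j}{n}\big)\Big| \le n^{-p}\sup_{\zeta\in[(x-p)/n,\, x/n]}\big|g^{(p)}(\zeta)\big|,
\]
so that altogether
\[
\big|\mathcal F_n(g)(ny)\big| \le \frac{1}{(4n|y|)^{p}}\cdot\frac1n\sum_{x\in\mathbb Z}\sup_{\zeta\in[(x-p)/n,\, x/n]}\big|g^{(p)}(\zeta)\big|.
\]
The hard part — the only step requiring genuine care — is showing that the remaining sum is bounded uniformly in $n$. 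For this I would use that $g\in\mathcal S(\mathbb R)$ forces $g^{(p)}\in\mathcal S(\mathbb R)$, so $|g^{(p)}(\zeta)|\le C_M(1+|\zeta|)^{-M}$ for every $M$; choosing $M=2$ and noting that the intervals $[(x-p)/n,x/n]$ have length $p/n\to 0$, the supremum is bounded by a constant multiple of $(1+|x/n|)^{-2}$, and $\tfrac1n\sum_x (1+|x/n|)^{-2}$ is a Riemann sum for the convergent integral $\int_{\mathbb R}(1+|u|)^{-2}\,du$, hence bounded uniformly in $n$. This yields $\big|\mathcal F_n(g)(ny)\big| \le C_p (n|y|)^{-p}$.

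Finally I would patch the two regimes to reach the stated form. When $n|y|\le 1$ the trivial bound gives $\big|\mathcal F_n(g)(ny)\big|\le C_0 \le 2C_0\,(1+(n|y|)^p)^{-1}$, since $1+(n|y|)^p\le 2$; when $n|y|\ge 1$ one has $1+(n|y|)^p\le 2(n|y|)^p$, so the decay bound gives $\big|\mathcal F_n(g)(ny)\big| \le 2C_p\,(1+(n|y|)^p)^{-1}$. Taking $C:=2\max(C_0,C_p)$ then establishes the estimate $\big|\mathcal F_n(g)(ny)\big|\le C\,(1+(n|y|)^p)^{-1}$ for all $|y|\le 1/2$, completing the proof.
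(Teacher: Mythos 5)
Your proof is correct. Note that the paper itself contains no argument for this lemma: its ``proof'' is the single sentence ``This lemma is entirely proved in \cite{BGJ}'', so your write-up is a genuine self-contained alternative to a citation. The route you take is elementary and sound: the $p$-fold summation by parts identity $(e^{2i\pi y}-1)^p S=\sum_x (\Delta^p a)_x e^{2i\pi xy}$ (legitimate here because $g\in\mathcal{S}(\mathbb{R})$ makes all the shifted series absolutely convergent), the mean value theorem for finite differences giving the factor $n^{-p}\sup|g^{(p)}|$, the lower bound $2|\sin(\pi y)|\ge 4|y|$ on $|y|\le 1/2$ which converts the prefactor into $(4n|y|)^{-p}$, the uniform-in-$n$ Riemann-sum bound on $\tfrac1n\sum_x\sup_\zeta|g^{(p)}(\zeta)|$ (your comparison of $\sup_{[(x-p)/n,\,x/n]}(1+|\zeta|)^{-2}$ with $(1+|x/n|)^{-2}$ does work, with a constant like $(1+p)^2$), and the final patching of the regimes $n|y|\le 1$ and $n|y|\ge 1$. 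The natural alternative, which is the type of argument one finds in \cite{BGJ}, compares the discrete transform with the continuous one via the Poisson summation formula, $\mathcal{F}_n(g)(ny)=\sum_{m\in\mathbb{Z}}\mathcal{F}g(n(y+m))$, and reads the decay off the rapid decay of $\mathcal{F}g$; your summation-by-parts argument buys the same conclusion without invoking Poisson summation, at the price of being restricted, as written, to integer $p$ (binomial finite differences). That restriction is harmless: for real $p\ge 1$ apply your bound with $p'=\lceil p\rceil$ and use that $1+t^{p}\le 2\bigl(1+t^{p'}\bigr)$ for all $t\ge 0$, but it deserves one explicit line.
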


\begin{proof}
This lemma is entirely proved in \cite{BGJ}.
\end{proof}
The following result is an easy corollary of the previous lemma.
\begin{corollary}\label{cor:fourier}
If $g \in {\mc S}(\R)$ is in the Schwartz space, then, for any $p\geqslant 0$,
\[ \lim_{n\to\infty} \int_{\big[-\tfrac{n}{2},\tfrac{n}{2}\big]} |\xi|^p\, \left\vert \cF_n(g)(\xi)-(\mc F g)(\xi)\right\vert^2 {d}\xi =0,\]
and there exists a constant $C>0$ such that
\[ \int_{\big[-\tfrac{n}{2},\tfrac{n}{2}\big]} \vert \xi\vert^p \,\vert \cF_n(g)(\xi) \vert^2 {d}\xi \leqslant C.\]
\end{corollary}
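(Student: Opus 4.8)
The plan is to derive both assertions directly from Lemma \ref{lem:sfp}. Writing $\xi = ny$, so that $n|y| = |\xi|$ and the range $|y| \leqslant 1/2$ becomes $|\xi| \leqslant n/2$, the lemma tells us that for every $q \geqslant 1$ there is a constant $C_q$ with
\[
|\cF_n(g)(\xi)| \leqslant \frac{C_q}{1 + |\xi|^q}, \qquad |\xi| \leqslant \tfrac{n}{2}.
\]
Since $g \in \mc S(\R)$, its continuous Fourier transform $\mc F g$ is again a Schwartz function, hence satisfies the same type of bound $|\mc F g(\xi)| \leqslant C_q'(1 + |\xi|^q)^{-1}$ on all of $\R$. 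These two decay estimates, uniform in $n$, are the only ingredients needed.

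For the second (uniform bound) statement, I would fix $q \geqslant 1$ large enough that $2q > p + 1$ and estimate
\[
\int_{[-n/2,\, n/2]} |\xi|^p \, |\cF_n(g)(\xi)|^2 \, d\xi \leqslant C_q^2 \int_\R \frac{|\xi|^p}{(1 + |\xi|^q)^2} \, d\xi =: C < \infty,
\]
the last integral being finite because the integrand behaves like $|\xi|^{p - 2q}$ at infinity with $p - 2q < -1$ (and $|\xi|^p$ is integrable near the origin since $p \geqslant 0$). The resulting bound is independent of $n$, as required.

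For the convergence statement, the first step is to record the pointwise limit $\cF_n(g)(\xi) \to \mc F g(\xi)$ for each fixed $\xi$: indeed $\cF_n(g)(\xi) = \frac{1}{n} \sum_{x \in \Z} F(x/n)$ with $F(u) = g(u) e^{2i\pi u \xi}$, which is a Riemann sum of mesh $1/n$ for $\int_\R F(u) \, du = \mc F g(\xi)$, and this converges since $F$ is continuous and rapidly decaying. Then I would apply the dominated convergence theorem to the functions
\[
\phi_n(\xi) = |\xi|^p \, |\cF_n(g)(\xi) - \mc F g(\xi)|^2 \, \mathbf{1}_{[-n/2,\, n/2]}(\xi).
\]
They converge to $0$ pointwise, and by the two decay estimates above they are dominated, uniformly in $n$, by the integrable function $\xi \mapsto C |\xi|^p (1 + |\xi|^q)^{-2}$ (again with $2q > p + 1$). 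Hence $\int_\R \phi_n \to 0$, which is exactly the claim.

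The only mildly delicate point is the interplay between the pointwise Riemann-sum convergence and the moving truncation $[-n/2, n/2]$; encoding the truncation in the indicator $\mathbf{1}_{[-n/2,\, n/2]}$ and observing that every fixed $\xi$ eventually lies in this interval resolves it cleanly, so that dominated convergence applies without further care. No genuine obstacle is expected here, consistent with the fact that this is an easy corollary of Lemma \ref{lem:sfp}.
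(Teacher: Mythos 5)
Your proposal is correct and follows exactly the route the paper intends: the paper gives no written proof, merely asserting the statement is an easy consequence of Lemma \ref{lem:sfp}, and your argument (the uniform decay bound $|\cF_n(g)(\xi)|\leqslant C_q(1+|\xi|^q)^{-1}$ on $[-n/2,n/2]$ from the lemma, the Schwartz decay of $\mc F g$, pointwise Riemann-sum convergence $\cF_n(g)(\xi)\to\mc F g(\xi)$, and dominated convergence with $2q>p+1$) is the natural and complete way to fill in those details. In particular you correctly identify that the pointwise convergence is an extra ingredient not contained in Lemma \ref{lem:sfp} itself, a point the paper silently glosses over.
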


\section{Sharp estimate of the resolvent norm}
\label{app:A}
\subsection{Hermite polynomials}
For simplicity, we assume $\beta=1$ and we drop the index $\beta$ from the notations. Let us decompose the generator ${\mc S}_n$ is the basis of multivariate Hermite polynomials. We denote by $(h_n)_{n \geqslant 0}$ the sequence of standard Hermite polynomials, which are orthogonal with respect to the standard Gaussian probability measure on $\RR$.

Let $\Sigma$ be the set composed of configurations  $\sigma=(\sigma_x)_{x\in \ZZ} \in \NN^{\ZZ}$ such that $\sigma_{x} \ne 0$ only for a finite number of $x$. The number $\sum_{x \in \ZZ} \sigma_x$ is called the size of $\sigma$ and  is denoted by $|\sigma|$. Let $\Sigma_n = \{ \sigma \in \Sigma \; ; \; |\sigma|=n\}$. On the set of $n$-tuples $\bx:=(x_1, \ldots,x_n)$ of $\ZZ^n$, we introduce the equivalence relation $\bx \sim \by$ if there exists a permutation $p$ on $\{1, \ldots,n\}$ such that $x_{p(i)} =y_i$ for all $i \in \{1, \ldots,n\}$. The class of $\bx$ for the relation $\sim$ is denoted by $[\bx]$ and its cardinal by $c({\bf x})$.  Then the set of configurations of $\Sigma_n$ can be identified with the set of $n$-tuples classes for $\sim$ by the one-to-one application:
\begin{equation*}
[{\bf x}]=[(x_1,\ldots,x_n)] \in \ZZ^n/ \sim \; \rightarrow \sigma^{[{\bf x}]} \in \Sigma_n
\end{equation*}
where for any $y \in \ZZ$, $(\sigma^{[\bf x]})_y= \sum_{i=1}^n {\bf 1}_{y=x_i}$. We will identify $\sigma \in \Sigma_n$ with the occupation numbers of  a configuration with $n$ particles, and $[\bf x]$ will correspond to  the positions of those $n$ particles.

To any $\sigma \in \Sigma$, we associate the polynomial function $h_{\sigma}$ given by
\begin{equation*}
h_{\sigma} (\omega) = \prod_{x \in \ZZ} h_{\sigma_x} (\omega_x).
\end{equation*}
Then, the family $\left\{ h_{\sigma} \; ; \; \sigma \in \Sigma \right\}$ forms an orthogonal basis of ${\bb L}^2 (\mu_1)$ such that
\begin{equation}
\label{eq:prod hsigma}
\int h_\sigma (\omega)  \, h_{\sigma'} (\omega) \, d\mu_{1}(\omega)  = \delta_{\sigma=\sigma'}\end{equation}
where $\delta$ denotes the Kronecker function, so that $\delta_{\sigma=\sigma'}=1$ if $\sigma=\sigma'$, otherwise it is equal to zero.

A function $F:\Sigma \to \RR$ such that $F(\sigma)=0$ if $\sigma \notin \Sigma_n$ is called a degree $n$ function. Thus, such a function is sometimes considered as a function defined only on $\Sigma_n$. A local function $F \in {\mathbf L}^2 (\mu_1)$ whose decomposition on the orthogonal basis $\{ h_{\sigma} \, ; \, \sigma \in \Sigma \}$ is given by $F=\sum_{\sigma} F(\sigma)  h_{\sigma}$ is called of degree $n$ if and only if $F$ is of degree $n$. A function $F: \Sigma_n \to \RR$ is nothing but a symmetric function $F:\ZZ^n \to \RR$ through the identification of $\sigma$ with $[\bx]$. We denote {\footnote{with some abuse of notations.}} by $\langle \cdot, \cdot \rangle$ the scalar product on $\oplus {\mathbf L}^2 (\Sigma_n)$, each $\Sigma_n$ being equipped with the counting measure. Hence, if $F,G:\Sigma \to \RR$, we have
\begin{equation*}
\langle F, G \rangle = \sum_{n\geqslant 0} \sum_{\sigma \in \Sigma_n} F_n (\sigma) G_n (\sigma) = \sum_{n \geqslant 0} \sum_{\bx \in \ZZ^n} \frac{1}{c({\bf x})} \,  F_n (\bx) G_n (\bx),
\end{equation*}
with $F_n, G_n$ the restrictions of $F,G$ to $\Sigma_n$. We recall that $c({\bf x})$ is the cardinal of $[{\bf x}]$.

If a local function $F \in {\bf L}^{2} (\mu_1)$ is written in the form $F =\sum_{\sigma \in \Sigma} F(\sigma) h_{\sigma}$ then we have
\begin{equation*}
({\mc S}_n F) (\omega) = \sum_{\sigma \in \Sigma} ({\mf S}_n F)(\sigma) h_{\sigma} (\omega)
\end{equation*}
with
\begin{equation*}
({\mf S} F)(\sigma) =\lambda \sum_{x \in \ZZ} ( F(\sigma^{x,x+1}) - F(\sigma)) - \gamma_n \sum_{x\in \ZZ} W_x (\sigma) F(\sigma),
\end{equation*}
where $\sigma^{x,x+1}$ is obtained from $\sigma$ by exchanging the occupation numbers $\sigma_x$ and $\sigma_{x+1}$ and $W_x (\sigma)$ is defined by
$W_x (\sigma)= 1-(-1)^{\sigma_x} \geqslant 0.
$
The formula for $W_x (\sigma)$ is a direct consequence of the fact that $h_n$ is even (resp. odd) if $n$ is even (resp. odd).

\subsection{$H_{1,z}$ and $H_{-1,z}$ norms}

For any $z>0$ and any $f \in {\bf L}^2 (\mu_1)$ we define the $H_{\pm 1,z}$-norm of $f$ by
\begin{equation*}
\| f\|_{\pm1,z} = \left[ \langle f \, , \, (z-{\mc S}_n)^{\pm 1} f \rangle \right]^{1/2}.
\end{equation*}
We have
\begin{equation*}
\| f \|_{1,z}^2 = z \langle f , f ,\rangle + {\mc D} (f)
\end{equation*}
where ${\mc D} (f)$ is the Dirichlet form of $f$ defined by
${\mc D} (f) = \langle f, -{\mc S}_n f \rangle.
$

If $f$ has the decomposition $f= \sum_{\sigma \in \Sigma} f(\sigma) h_{\sigma}$ then
\begin{equation}
\label{eq:df01}
{\mc D} (f) =\cfrac{1}{2} \sum_{x \in \ZZ} \sum_{\sigma \in \Sigma} \left( f(\sigma^{x,x+1}) -f(\sigma) \right)^2 + \gamma_n \sum_{\sigma \in \Sigma, x \in \ZZ} W_x (\sigma) f^2 (\sigma) .
\end{equation}
Let $\Delta_{+}= \left\{ (x,y) \in \ZZ^2 \, ; \, y \geqslant x+1\right\}$, $\Delta_- = \left\{ (x,y) \in \ZZ^2 \, ; \, y \le x-1\right\}$ and $\Delta_0 = \left\{ (x,x) \, ; \, x \in \ZZ \right\}$.

We denote by ${\bb D}$ the Dirichlet form of a symmetric simple random walk on $\ZZ^2$ where jumps from $\Delta_{\pm}$ to $\Delta_0$ and from $\Delta_0$ to $\Delta_{\pm}$ have been suppressed and jumps from $(x,x) \in \Delta_0$ to $(x \pm 1, x \pm 1) \in \Delta_0$ have been added, i.e.
 \begin{equation*}
{\bb D}(f)=\cfrac{1}{2} \sum_{|\be|=1} \,\,\sum_{\bx \in \Delta_{\pm}, \bx + \be \in \Delta_{\pm}}\!\! \!\!\!\!\left( f(\bx + \be) -f(\bx)\right)^2 +  \cfrac{1}{2} \sum_{\bx \in \Delta_0} \!\left( f(\bx \pm (1,1)) -f(\bx)\right)^2,
\end{equation*}
where $f:\ZZ^2 \to \RR$ is a symmetric function such that $\sum_{\bx \in \ZZ^2} f^2 (\bx) < \infty$.

\begin{lemma}
\label{lem:df12}
Let $f= \sum_{\sigma \in \Sigma_2} f (\sigma) h_{\sigma}$ be a local  function of degree $2$. There exists a positive constant $C$, independent of $f$ and $n$, such that
\begin{equation*}
C^{-1} \left[ {\bb D} (f) + \gamma_n \sum_{\bx \notin \Delta_0} f^2 (\bx)  \right]  \le {\mc D} (f) \le C  \left[ {\bb D} (f) + \gamma_n \sum_{\bx \notin \Delta_0} f^2 (\bx)  \right] .
\end{equation*}
\end{lemma}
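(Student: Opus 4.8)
The plan is to compute both Dirichlet forms explicitly on degree-$2$ functions and match them term by term, up to universal multiplicative constants. The starting point is the decomposition \eqref{eq:df01}, which splits $\mc D(f)$ into an \emph{exchange part} $\mc D_{\mathrm{exch}}(f)=\frac12\sum_x\sum_\sigma (f(\sigma^{x,x+1})-f(\sigma))^2$ and a \emph{flip part} $\gamma_n\sum_{\sigma,x}W_x(\sigma)f^2(\sigma)$. I would treat these two parts separately, and throughout I identify a configuration $\sigma\in\Sigma_2$ with the unordered pair of positions of its two particles, so that the diagonal $\Delta_0$ corresponds to doubly occupied sites ($\sigma_y=2$) and the off-diagonal region to two singly occupied sites.

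First I would dispatch the flip part, which is elementary. For $\sigma\in\Sigma_2$ one has $W_x(\sigma)=1-(-1)^{\sigma_x}$, which vanishes when $\sigma_x$ is even and equals $2$ when $\sigma_x$ is odd. Hence $\sum_x W_x(\sigma)=0$ on $\Delta_0$ (a single site with $\sigma_y=2$) and $\sum_x W_x(\sigma)=4$ off the diagonal (two sites with occupation $1$). Using that each off-diagonal configuration corresponds to two ordered pairs $\bx\in\ZZ^2$, this gives exactly $\gamma_n\sum_{\sigma,x}W_x(\sigma)f^2(\sigma)=2\gamma_n\sum_{\bx\notin\Delta_0}f^2(\bx)$, which is the desired term up to the harmless factor $2$.

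The core of the argument is the exchange part, where the key structural observation is that $\mc S^{\mathrm{exch}}$ only permutes occupation numbers, hence preserves the \emph{multiset} of occupation numbers; in particular the dynamics never connects $\Delta_0$ to the off-diagonal region, which is precisely why $\bb D$ is defined with jumps between $\Delta_0$ and $\Delta_\pm$ suppressed. I would then check the two regions separately. On the diagonal, the only nontrivial exchanges are $(\sigma_y,\sigma_{y+1})=(2,0)\leftrightarrow(0,2)$, producing the double jumps $(y,y)\to(y\pm1,y\pm1)$, and a direct comparison shows the diagonal contribution of $\mc D_{\mathrm{exch}}$ equals the diagonal term of $\bb D$. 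Off the diagonal, moving one particle along a bond reproduces the nearest-neighbor moves of the planar walk, with the crucial caveat that for an adjacent pair $(a,a+1)$ the exchange on the bond $(a,a+1)$ leaves $\sigma$ unchanged, so the ``merging'' moves onto $\Delta_0$ are automatically absent --- again matching $\bb D$. Accounting for the multiplicity $c(\bx)=2$ of off-diagonal ordered pairs against $c(\bx)=1$ on the diagonal, this yields $\tfrac12\bb D(f)\le \mc D_{\mathrm{exch}}(f)\le\bb D(f)$.

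Combining the two parts gives $\tfrac12\bb D(f)+2\gamma_n\sum_{\bx\notin\Delta_0}f^2\le \mc D(f)\le \bb D(f)+2\gamma_n\sum_{\bx\notin\Delta_0}f^2$, from which the claimed equivalence follows with $C=2$; in particular the constant is manifestly independent of $f$ and of $n$, the whole $n$-dependence being carried explicitly by $\gamma_n$. The main obstacle is not analytic but combinatorial: one must track carefully the passage between the configuration sum $\sum_\sigma$ (unordered pairs, one term per configuration) and the planar sum $\sum_{\bx\in\ZZ^2}$ (ordered pairs), since the diagonal and off-diagonal multiplicities differ, and one must verify that the suppressed-jump prescription in the definition of $\bb D$ exactly encodes the fact that exchange never merges or splits particles. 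These bookkeeping factors are the source of the constant $2$ and the only place where an error could creep in.
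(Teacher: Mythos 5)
Your proof is correct, and it takes the same (indeed the only natural) route as the paper: the paper's own proof of Lemma \ref{lem:df12} consists of the single word ``Straightforward,'' meaning exactly the direct term-by-term verification you carry out. Your bookkeeping is accurate --- the flip part equals $2\gamma_n\sum_{\bx\notin\Delta_0}f^2(\bx)$, the exchange part satisfies $\tfrac12 \bb D(f)\le \mc D_{\mathrm{exch}}(f)\le \bb D(f)$ because diagonal contributions coincide while off-diagonal planar edges double-count configuration moves, and so the claimed equivalence holds with $C=2$, manifestly independent of $f$ and $n$.
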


\begin{proof}
Straightforward.
\end{proof}

We denote by $\Sigma_2^0$ the set of configurations $\sigma$ of $\Sigma_2$ such that $\sigma= 2 \delta_x$, $x \in \ZZ$, and $\Sigma_2^{\pm}$ the complementary set of $\Sigma_2^0$ in $\Sigma_2$, i.e. the set of configurations $\sigma \in \Sigma_2$ such that $\sigma=\delta_x + \delta_y$, $y \ne x \in \ZZ$.

Observe that the function $\varphi$ defined by \eqref{eq:Psifunction} is a function of degree $2$ with a decomposition in the form $\varphi=\sum_{\sigma \in \Sigma_2} \Phi(\sigma) h_{\sigma}$ which satisfies $\Phi (\sigma)=0$ if $\sigma \in \Sigma_2^0$. We have that
\begin{equation*}
\Big\langle  \varphi\, , \left( z - {\mc S}_n \right)^{-1}\varphi \Big \rangle = \sup_{g} \Big\{ 2 \langle \varphi, g\rangle - z \langle g\;,g\rangle - {\mc D} (g) \Big\}
\end{equation*}
where the supremum is taken over local functions $g \in {\mathbf L}^2 (\mu_1)$. Decompose $g$ appearing in this variational formula as $g=\sum_{\sigma} G(\sigma) h_{\sigma}$. Recall that $\{h_{\sigma} \, ; \, \sigma \in \Sigma\}$ are orthogonal, that the function $\varphi$ is a degree $2$ function such that $\Phi(\sigma)=0$ for any $\sigma \notin \Sigma_2^{\pm}$  and formula \eqref{eq:df01} for the Dirichlet form ${\mc D} (g)$. Thus, we can restrict this supremum over degree $2$ functions $g$ such that $G(\sigma)=0$ if $\sigma \in \Sigma_2^0$. Then, by Lemma \ref{lem:df12}, we have
\begin{equation*}
\begin{split}
\Big\langle  \varphi, \left( z -{\mc S}_n \right)^{-1} \varphi  \Big \rangle \le  C\;  \sup_{G}&  \Bigg\{  \sum_{x \ne y} \Phi(x,y) G(x,y)  - (z+\gamma_n)  \sum_{\substack{(x,y) \in \ZZ^2\\ x\neq y}} G^2 (x,y) \\
&\quad \quad \quad-C' \sum_{|\be| =1}\sum_{\substack{(x,y) \in \Delta^{\pm}\\ (x,y) + \be \in \Delta^{\pm}}}  \Big( G((x,y)+\be) -G(x,y) \Big)^2\Bigg\}
\end{split}
\end{equation*}
where $C,C'$ are positive constants, $\Delta^{\pm} = \{ (x,y) \in \ZZ^2 \, ; \, x\ne y\}$ and as usual we identify the functions defined on $\Sigma_n$ with symmetric functions defined on $\ZZ^n$.

In order to get rid of the geometric constraints appearing in the last term of the variational formula, for any symmetric function $G$ defined on the set $\Delta_{\pm}$, we denote by ${\tilde G}$ its extension to $\ZZ^2$ defined by
$${\tilde G} (x,y) =G (x,y) \; \text{if}\;  x\ne y, \quad {\tilde G} (x,x) = \cfrac{1}{4} \sum_{|\be|=1} G((x,x) +\be).$$
It is trivial that there exists a constant $C>0$ such that
\begin{equation*}
\sum_{(x,y) \in \ZZ^2} {\tilde G}^2 (x,y)\le C\sum_{\substack{(x,y) \in \ZZ^2\\ x\neq y}} {G}^{2} (x,y),
\end{equation*}
and
\[
\sum_{|\be| =1}\sum_{(x,y) \in \ZZ^2}  \Big({\tilde G} ((x,y)+\be) -{\tilde G} (x,y) \Big)^2
\le C \sum_{|\be| =1}\sum_{\substack{(x,y) \in \Delta^{\pm}\\ (x,y) + \be \in \Delta^{\pm}}}  \Big(G((x,y)+\be) -G(x,y) \Big)^2 .
\]

Thus, we have
\begin{multline*}
\Big\langle  \varphi, \left( z-{\mc S}_n \right)^{-1}\varphi \Big \rangle
\le  C_0 \sup_{G} \left\{  \sum_{(x,y) \in \ZZ^2} \Phi (x,y) G(x,y)  - C_1 (z+\gamma_n) \sum_{(x, y) \in \ZZ^2} G^2 (x,y) \right. \\
\left. \quad \quad \quad -C_2 \sum_{|\be| =1}\sum_{(x,y) \in \ZZ^2 }  \Big( G((x,y)+\be) -G(x,y) \Big)^2\right\}
\end{multline*}
where the supremum is now taken over all symmetric local functions $G:\ZZ^2 \to \RR$.  notice that the last variational formula is equal to the resolvent norm, for a simple symmetric two dimensional random walk, of the function $\Phi$.

Then by using Fourier transform, it is easy to see that (see \cite{BG}) the last supremum is equal to
\begin{equation}
\label{eq:intpsi}
\cfrac{C_0}{4} \int_{[0,1]^2} \cfrac{|{\widehat \Phi}(\bk)|^2}{C_1 [z+\gamma_n] + 4C_2 \sum_{i=1}^2 \sin^{2} (\pi k_i) } d\bk
\end{equation}
where the fourier transform ${\widehat \Phi}$ of $\Phi$ is given by
\begin{equation*}
{\widehat \Phi} ( \bk) = \sum_{(x,y) \in \ZZ^2} \Phi (x,y) e^{2i\pi (k_1 x +k_2 y)}, \quad \bk= (k_1,k_2) \in [0,1]^2.
\end{equation*}

Thus we have reduced the problem to estimate the behavior w.r.t. $n$ of the integral  \eqref{eq:intpsi} with $z= 1/ (tn^a)$. Since the constants $t, C_0, C_1,C_2$ do not play any role, we fix them equal to $1$.

The function $\varphi$ can be rewritten as
\begin{equation*}
\begin{split}
\varphi (\omega) &= \sum_{v>u} \left\{ \sum_{z \in \ZZ} (\nabla_n f) \big( \tfrac{z}{n}\big) \big[ \psi_{|v-u|} (u-z-1) -\psi_{|v-u|} (u-z)\big]\right\}\;  \omega_u\omega_v\\
&= \sum_{u,v} \Phi(u,v) \omega_u \omega_v
\end{split}
\end{equation*}
with the symmetric function $\Phi$ given by
$$\Phi (u,v) = \cfrac{{\bf 1}_{u \ne v}}{2} \; \sum_{z \in \ZZ} (\nabla_n f) \big( \tfrac{z}{n}\big) \big[ \psi_{|v-u|} (u\wedge v -z-1) -\psi_{|v-u|} (u \wedge v-z)\big].$$
Its Fourier transform is given by
\begin{equation*}
\begin{split}
{\widehat \Phi} (\bk) &= \sum_{u \in \ZZ} \sum_{j=1}^{\infty} \Phi (u,u+j) e^{2i\pi ( k_1 u + k_2 (u+j))}+\sum_{u \in \ZZ} \sum_{j=1}^{\infty} \Phi (u+j,u) e^{2i\pi ( k_1 (u+j) + k_2 u)}\\
&= - \cfrac{1}{2} \; {\cF_n(\Delta_n f)}( n(k_1+k_2))\; \sum_{j=1}^{\infty} (e^{2i\pi k_1 j} +e^{2i \pi k_2 j}) {\widehat \psi}_j (k_1 +k_2).
\end{split}
\end{equation*}
Since $|{\cF_n(\Delta_n f)} (\xi)| \le C_0$ for a suitable constant $C_0>0$ independent of $n$ and $\xi$, it follows that there exist constants $C,C'>0$ such that
\begin{equation*}
\begin{split}
| {\widehat \Phi} (\bk) |^2 &\le C \big|  \sum_{j=1}^{\infty} (e^{2i\pi k_1 j} +e^{2i \pi k_2 j}) {\widehat \psi}_j (k_1 +k_2)\big|^2\\
&=C \bigg|   \sum_{j=2}^{\infty} \Big\{ (e^{2i\pi k_1 j-2i\pi k_2} +e^{2i \pi k_2 j-2i\pi k_1} +e^{2i\pi k_1 (j-1)} +e^{2i \pi k_2 (j-1)}  ) {\widehat \rho}_j (k_1 +k_2)\Big\} \\
& \qquad \qquad + (e^{2i\pi(k_1-k_2)}+e^{2i\pi(k_2-k_1)}){\widehat \rho}_1 (k_1 +k_2)\bigg|^2\\
& \le C' \cfrac{|{\widehat \rho_1} (k_1 +k_2)|^2}{(1-|X(k_1 +k_2)|)^2}.
\end{split}
\end{equation*}

To get the last inequality, we used the explicit form of ${\widehat \rho}_j$ and the fact that \[|1-X(k_1+k_2) w| \geqslant 1-|X(k_1 +k_2)| >0\]for any complex number $w$ of modulus one. Then, from Lemma \ref{lem:estimates} it follows that
\begin{equation*}
\cfrac{|{\widehat \rho_1} (\theta)|^2}{(1-|X(\theta)|)^2} \le \cfrac{C}{\gamma_n\big[ \gamma_n + \sin^2 (\pi \theta)\big]}.
\end{equation*}

\subsection{Estimate of an integral\label{app:estimate}}

It remains to study the behavior of the integral
\begin{equation*}
I_n:=\int_{[0,1]^2} \; d\bk \left\{ \cfrac{1}{ (n^{-a} +\gamma_n) + \sin^2 (\pi k_1) + \sin^2 (\pi k_2)} \times \cfrac{1} {\gamma_n\, \big[ \gamma_n + \sin^2 (\pi(k_1 +k_2) )\big] } \right\} .
\end{equation*}

The leading term in this integral is provided when
\begin{enumerate}[(1)]
\item\label{item1} either $\bk$ is an extremal point of $[0,1]^2$, i.e. $(0,0), (0,1), (1,0), (1,1)$,
\item\label{item2} or $\bk$ belongs to the diagonal $\{k_1+k_2=1\}$, and $\bk$ is not one of the previous four points.
\end{enumerate}

We first consider the first case \eqref{item1}. By periodicity, we can assume that the extremal point is $(0,0)$. We also have that $a > b$, and then $\gamma_n \gg n^{-a}$. We perform a Taylor expansion and forget about the constants. We are reduced to compute the order as $n$ goes to $\infty$ of
\begin{equation*}
\int_{[0,1]^2}  \cfrac{d\bk} {\gamma_n \, \big[ \gamma_n + k_1^2 +k_2^2 \big] \, \big[\gamma_n + (k_1 +k_2)^2 \big]} \approx \int_0^1  \cfrac{r\, dr}{\gamma_n \big[ \gamma_n+r^2\big]^2} \approx n^{2b}.
\end{equation*}
The first estimate comes after a change into polar coordinates, and the last one is deduced from another change of variables:
\[ \int_0^1  \cfrac{r\, dr}{\gamma_n \big[ \gamma_n+r^2\big]^2} = \int_0^{1/\sqrt{\gamma_n}} \cfrac{du}{\gamma_n^2 \big[ 1+u^2 \big]^2} \approx n^{2b}.\]
In the second case \eqref{item2}, with the same argument we are reduced to investigate the behavior of
\[\int_0^1  \cfrac{r\, dr}{\gamma_n^2 \big[ \gamma_n+r^2\big]} \] which is of the same order $n^{2b}$.

When $\bk$ is not close to one of these points,  we can bound by below  $\sin^2 (\pi k_1) + \sin^2 (\pi k_2)$ and $ \sin^2 (\pi(k_1 +k_2) )$ by a strictly positive constant independent of $n$ and then show that the corresponding integral gives a smaller contribution. Finally, $I_n$ is of order $n^{2b}$.

\section{Computations in the case $b>1$}\label{appc}

\subsection{Computations involving the generator}
\label{sec:Abb}

In this subsection, we explain how to obtain the differential equations in Proposition \ref{prop:equadiff}. Let $f: \bb Z \to \bb R$ be a function of finite support, and let ${\mc E} (f) : \Omega \to \R$ be defined as
\begin{equation*}
{\mc E} (f) =\sum_{{x \in \bb Z}} f(x)\omega_x^2.
\end{equation*}
A simple computation shows that
\begin{equation*}
\mc S_n{\mc E} (f) =\sum_{{x \in \bb Z}} \Delta f(x)\omega_x^2,
\end{equation*}
where $\Delta f(x)  =  f(x+1)+f(x-1) -2f(x)$ is the discrete Laplacian on $\bb Z$.
On the other hand
\begin{equation*}
\mc A{\mc E} (f) =-2\sum_{{x \in \bb Z} } \nabla f(x)\omega_x\omega_{x+1},
\end{equation*}
where $\nabla f(x)  =  f(x+1) -f(x)$ is the discrete right-derivative in $\bb Z$. Let $f: \bb Z^2 \to \bb R$ be a symmetric function of finite support, and let ${ Q} (f) : \Omega \to \R$ be defined as

\begin{equation*}
{Q} (f) =\sum_{\substack{x,y \in \bb Z \\ x \neq y}} f(x,y) \omega_x \omega_y.
\end{equation*}
Define $\Delta f: \bb Z^2 \to \bb R$ as
\begin{equation*}
\Delta f(x,y)  =  f(x+1,y)+f(x-1,y) + f(x,y+1)+f(x,y-1)-4f(x,y)
\end{equation*}
for any $x,y \in \bb Z$ and $ A f: \bb Z^2 \to \bb R$ by
\begin{equation*} Af(x,y) = f(x-1,y) + f(x,y-1) - f(x+1,y) - f(x,y+1)
\end{equation*}
for any $x,y \in \bb Z$. Notice that $\Delta f$ is the discrete Laplacian on the lattice $\bb Z^2$ and $\mc A f$ is a possible definition of the discrete derivative of $f$ in the direction $(-2,-2)$. Notice that we are using the same symbol $\Delta$ for the one-dimensional and two-dimensional, discrete Laplacian. From the context it will be clear which operator is used. We have that
\begin{align*}
\mc S_n Q(f) &= \sum_{\substack{x,y \in \bb Z \\ x \neq y}} \big[({\Delta} f)(x,y)-4\gamma_n f(x,y)\big] \omega_x\omega_y \\
&\qquad + 2 \sum_{x \in \bb Z}  \Big\{ \big[f(x,x+1)-f(x,x)\big] +\big[f(x,x+1) -f(x+1, x+1)\big]\Big\} \omega_x \omega_{x+1}\\
		&= Q(\Delta f-2\gamma_n \text{Id})\\
		& \qquad + 2 \sum_{x \in \bb Z}  \Big\{ \big[f(x,x+1)-f(x,x)\big] +\big[f(x,x+1) -f(x+1, x+1)\big]\Big\} \omega_x \omega_{x+1}.
\end{align*}
Similarly, we have that
\[
\mc A{Q} (f) =  Q(A f) +  2 \sum_{x \in \bb Z} \Big\{ \big[ f(x-1,x) -f(x,x+1)\big] \omega_x^2- \big[ f(x,x)-f(x+1,x+1)\big]\omega_x \omega_{x+1}\Big\}.
\]
It follows that
\begin{equation}
\label{ecA.8}
\mc L_n Q(f) = Q\big((\Delta + A-4\gamma_n \text{Id})f\big) + D(f),
\end{equation}
where the diagonal term $D(f)$ is given by
\[
D(f)  =2 \sum_{x \in \bb Z} \big(\omega_x^2- {\beta}^{-1}\big) \big(f(x- 1,x) - f(x,x+1)\big)
+ 4 \sum_{x \in \bb Z}  \big( f(x,x+1) -f(x,x)\big) \omega_x\omega_{x+1}.
\]
The normalization constant ${\beta}^{-1}$ can be added for free because $f(x,x+ 1) - f(x- 1,x)$ is a mean-zero function. The diagonal term will be of capital importance, in particular the one involving $\omega_x^2$. Notice that the operators $f \mapsto Q(f)$, $f \mapsto \mc L_n Q(f)$ are continuous maps from $\ell^2(\bb Z^2)$ to $\mathbf{L}^2(\mu_\beta)$. Therefore, an approximation procedure shows that the identities above hold true for any $f \in \ell^2(\bb Z^2)$.

Recalling the definition of $\mc S_t^n$, and assuming $\beta=1$, we deduce \eqref{ec3.10}, that is
\[ \tfrac{d}{dt} \mc S_t^n= \tfrac{1}{2n} \bb E\Big[\sum_{x\in\bb Z} g\big(\tfrac{x}{n}\big)(\omega_x^2(0)-1)\times n^{3/2} \mc L_n(\mc E (f))\Big]=-2 Q_t^n(\nabla_n f \otimes \delta)+\mc S_t^n\big(\tfrac{1}{\sqrt{n}}\Delta_nf\big).
\]

\subsection{Estimates on the Poisson equation}

Let us define the functions
\begin{equation}
\label{eq:lambdagamma}
\Lambda (x,y) = 4  \left[ \sin^2 (\pi x) + \sin^2 (\pi y)\right], \quad \Omega (x,y) = 2 \left[ \sin (2\pi x) + \sin (2 \pi y)\right].
\end{equation}
%
%
%
Several times we will use the following change of variable property proved in \cite{BGJ}.

\begin{lemma}
\label{lem:cov}
Let $f: \R^2 \to {\mathbb C}$ be a $n$-periodic function in each direction of $\R^2$. Then we have
\begin{equation*}
\int\int_{\big[-\tfrac{n}{2}, \tfrac{n}{2}\big]^2} f (k,\ell) \, dk d\ell \; = \; \int \int_{\big[-\tfrac{n}{2}, \tfrac{n}{2}\big]^2} f(\xi- \ell, \ell) \, d\xi d\ell.
\end{equation*}
\end{lemma}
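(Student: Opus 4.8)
The plan is to reduce the identity to the elementary fact that the integral of an $n$-periodic function over any interval of length $n$ does not depend on the position of that interval.

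First I would fix $\ell \in [-\tfrac{n}{2},\tfrac{n}{2}]$ and treat the inner integral over $k$ alone. Performing the change of variable $\xi = k + \ell$ (so that $k = \xi - \ell$ and $dk = d\xi$), this integral becomes
\[
\int_{-n/2}^{n/2} f(k,\ell)\, dk = \int_{-n/2 + \ell}^{n/2 + \ell} f(\xi - \ell, \ell)\, d\xi.
\]
The key observation is that the new integrand $\xi \mapsto f(\xi - \ell, \ell)$ is itself $n$-periodic in $\xi$, since $f$ is $n$-periodic in its first argument: $f((\xi + n) - \ell, \ell) = f(\xi - \ell, \ell)$.

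Next I would invoke the general fact that any $n$-periodic function $g$ satisfies $\int_{c}^{c+n} g(\xi)\, d\xi = \int_{-n/2}^{n/2} g(\xi)\, d\xi$ for every $c \in \R$; this follows by splitting the interval $[c,c+n]$ at an appropriate translate of an endpoint and recombining the two pieces using periodicity. Applying it with $c = -\tfrac{n}{2} + \ell$ and $g(\xi) = f(\xi - \ell, \ell)$ yields
\[
\int_{-n/2 + \ell}^{n/2 + \ell} f(\xi - \ell, \ell)\, d\xi = \int_{-n/2}^{n/2} f(\xi - \ell, \ell)\, d\xi.
\]

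Finally, I would integrate both sides in $\ell$ over $[-\tfrac{n}{2},\tfrac{n}{2}]$ and apply Fubini's theorem, which is justified because $f$ is integrable on the bounded square, to recover the stated identity. There is no genuine obstacle here: the whole content is the one-line substitution combined with the periodicity shift, and the only point requiring a little care is the bookkeeping that moves the displaced interval $[-\tfrac{n}{2}+\ell,\tfrac{n}{2}+\ell]$ back to $[-\tfrac{n}{2},\tfrac{n}{2}]$, which is precisely what the periodicity of $f$ in its first variable allows.
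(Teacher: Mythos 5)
Your proof is correct and complete. Note that the paper itself does not prove this lemma at all: it simply defers to the reference [BGJ] ("proved in \cite{BGJ}"), so there is no internal argument to compare against. Your argument --- the substitution $\xi = k+\ell$ in the inner integral, the observation that $\xi \mapsto f(\xi-\ell,\ell)$ inherits $n$-periodicity from the first variable of $f$, the standard fact that an $n$-periodic function has the same integral over every interval of length $n$, and a final appeal to Fubini to pass between the double and iterated integrals --- is exactly the natural elementary proof, and it is self-contained where the paper is not. The only implicit hypothesis you rely on (integrability of $f$ on the square, needed both for Fubini and for the statement to make sense) is satisfied in all the applications the paper makes of the lemma, where $f$ is built from Fourier transforms of Schwartz-class data.
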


\subsection{Proof of Lemma \ref{l3}}
\label{sec:proof}
The proof is similar to the proof given in \cite{BGJ}. The Fourier transform of $h_n$ is given by
\begin{equation}
\label{ecB.4}
\mc F_n({h_n}) (k,\ell) = \frac{1}{2\sqrt n} \frac{ i \Omega \big(\tfrac{k}{n},\tfrac{\ell}{n}\big) \mc F_n({f})(k+\ell)}{\big( \Lambda + 4\gamma_n -i \Omega \big) \big(\tfrac{k}{n},\tfrac{\ell}{n}\big)},
\end{equation}
where $\Lambda$ and $\Gamma$ are defined in \eqref{eq:lambdagamma}.
Observe first that
\begin{equation*}
i\,\Omega \big( \tfrac{\xi -\ell}{n}, \tfrac{\ell}{n}\big) = e^{ \tfrac{2i \pi\ell}{n}} \big(1- e^{-\tfrac{2 i \pi \xi}{n}}\big) -  e^{- \tfrac{2i \pi \ell}{n}} \big(1- e^{\tfrac{2 i \pi \xi}{n}}\big)
\end{equation*}
so that
\begin{equation}
\label{eq:omegaomega}
\left\vert i\Omega \big( \tfrac{\xi -\ell}{n}, \tfrac{\ell}{n}\big)\right\vert^2 \le 4 \Big| 1- e^{\tfrac{2 i \pi \xi}{n}} \Big|^2=16 \sin^2 \big(\tfrac{\pi\xi}{n}\big).
\end{equation}
Then, by Plancherel-Parseval's relation and by using Lemma \ref{lem:cov} we have that
\begin{equation*}
\begin{split}
\| h_n \|^2_{2,n}
		&=\iint_{\big[-\tfrac{n}{2}, \tfrac{n}{2}\big]^2} | \mc F_n(h_n) (k, \ell) |^2 dk d\ell = \frac{1}{4n} \iint_{\big[-\tfrac{n}{2}, \tfrac{n}{2}\big]^2}\frac{\Omega^2\big( \tfrac{k}{n}, \tfrac{\ell}{n}\big) \, |{\mc F_n(f)} (k+\ell)|^2}{\left(\Lambda \big( \tfrac{k}{n}, \tfrac{\ell}{n}\big)+4\gamma_n\right)^2 + \Omega^2 \big( \tfrac{k}{n}, \tfrac{\ell}{n}\big)} \; dk d\ell \\
		&\le \frac{1}{n} \int_{-n/2}^{n/2} \big|1 - e^{\tfrac{2i\pi\xi}{n}}\big|^2\; \big|mc F_n(f) (\xi)\big|^2 \; \left[ \int_{-n/2}^{n/2} \frac{d\ell}{\Lambda^2 \big( \tfrac{\xi - \ell}{n}, \tfrac{\vphantom{\xi}\ell}{n}\big ) + \Omega^2 \big( \tfrac{\xi - \ell}{n}, \tfrac{\vphantom{\xi}\ell}{n}\big)} \right] \; d\xi \\
		&= 4 n \int_{-1/2}^{1/2} \sin^2 (\pi y) |\mc F_n(f) (ny) |^2 W(y) dy,
\end{split}
\end{equation*}
where for the last equality we performed the changes of variables $y=\frac{\xi}{n}$ and $x= \frac{\ell}{n}$ and forget the positive term $4\gamma_n$. The function $W$ is defined by
\begin{equation}
\label{eq:W}
W(y) =  \int_{-1/2}^{1/2}  \frac{dx}{\Lambda^2(y-x,x) + \Omega^2(y-x, x)}.
\end{equation}
It is proved in \cite{BGJ}, Lemma F.5 that $W(y) \le C |y|^{-3/2}$ on $\big[-\tfrac{1}{2}, \tfrac{1}{2}\big]$. Hence, we get, by using the second part of Lemma \ref{lem:sfp} with $p=3$ and the elementary inequality $\sin^2 (\pi y) \le (\pi y)^2$, that
\begin{equation*}
\begin{split}
&\iint_{\big[-\tfrac{n}{2}, \tfrac{n}{2}\big]^2} | \mc F_n(h_n) (k, \ell) |^2 dk d\ell \le C' n \int_{-1/2}^{1/2} \cfrac{|y|^{1/2}}{1+ (n|y|)^3} dy = O (n^{-1/2}).
\end{split}
\end{equation*}
We have proved the first part of Lemma \ref{l3}, that is \eqref{eq:l31}. We turn now to \eqref{eq:l32}.
We denote by $G_n$ the $1$-periodic function defined by
\begin{equation}
\label{eq:fF}
G_n(y)= \frac{1}{4} \int_{-1/2}^{1/2} \cfrac{\Omega^2(y-z,z) }{4\gamma_n+\Lambda (y-z,z) -i\, \Omega (y-z,z)}\, dz.
\end{equation}
As $y\to 0$, the function $G_n$ is close (in a sense defined below) to the function $G_0$ given by
\begin{equation}
\label{eq:G0}
G_0 (y)= \cfrac{1}{2}|\pi y|^{3/2} ( 1+ i \, {\rm{sgn}} (y)) .
\end{equation}
{In fact we show in Lemma \ref{lem:G0345} that there exists one constant $C>0$ such that for any $|y| \le 1/2$ and for all $n \in \bb N_0$,
\begin{equation}
\label{eq:G000}
|G_n (y) -G_0(y)| \le C \left[ \sin^2 (\pi y) + \gamma_n^2\, |\sin (\pi y)|^{-1/2} + \gamma_n\, |\sin (\pi y)|^{1/2}\right].
\end{equation}
Recall  $\mc F f$ the (continuous) Fourier transform of $f$ from \eqref{fou tranf cont}
and denote by $q:=q(f): \R \to \R$ the function defined by
\begin{equation*}
q(x) = \int_{- \infty}^{\infty} e^{-2 i \pi xy} G_0 (y) {\mc F} f (y) dy
\end{equation*}
which coincides with $-\tfrac{1}{4} {\bb L} f (x)$. Let $q_n : {\sfrac{1}{n}} \Z \to \R$ the function defined by
$q_n \big(\tfrac{x}{n}) = {{\mc D}}_n h_n \,  \big(\tfrac{x}{n}).
$
Then, the proof of \eqref{eq:l32} reduces to the following
\begin{lemma}
We have
\begin{equation*}
\lim_{n \to + \infty} \frac{1}{n} \sum_{x \in \Z} \left[ q \big(\tfrac{x}{n} \big) -q_n \big( \tfrac{x}{n}\big)\right]^2 =0.
\end{equation*}
\end{lemma}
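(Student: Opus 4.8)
The plan is to move to Fourier variables, where both $q$ and $q_n=\mc D_n h_n$ become Fourier multipliers applied to $\mc F f$ and $\mc F_n(f)$, respectively, and then to compare the two symbols using the estimate \eqref{eq:G000}. First I would compute the discrete Fourier transform of $q_n$. Starting from the inversion formula for $h_n$ and the definition $\mc D_n h_n(\tfrac xn)=n\big(h_n(\tfrac xn,\tfrac{x+1}n)-h_n(\tfrac{x-1}n,\tfrac xn)\big)$ one obtains
\[
\mc D_n h_n(\tfrac xn)=n\iint_{[-\frac n2,\frac n2]^2}\mc F_n(h_n)(k,\ell)\,e^{-2i\pi(k+\ell)x/n}\big(e^{-2i\pi\ell/n}-e^{2i\pi k/n}\big)\,dk\,d\ell .
\]
Since the kernel $\mc F_n(h_n)$ in \eqref{ecB.4} is symmetric in $(k,\ell)$, I would symmetrize the bracket; its symmetric part equals $-\tfrac i2\,\Omega(\tfrac kn,\tfrac\ell n)$, so that the numerator $i\Omega\cdot(-\tfrac i2\Omega)=\tfrac12\Omega^2$ reconstructs precisely the kernel in the definition \eqref{eq:fF} of $G_n$. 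Applying the change of variables $\xi=k+\ell$ of Lemma \ref{lem:cov}, and then $z=\ell/n$, $y=\xi/n$, this gives the clean identity $\mc F_n(q_n)(\xi)=n^{3/2}\,G_n(\tfrac\xi n)\,\mc F_n(f)(\xi)$.

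Next, writing $q(\tfrac xn)=\int_\R e^{-2i\pi\frac xn\eta}G_0(\eta)\mc F f(\eta)\,d\eta$ and periodizing the integrand over the fundamental domain $[-\tfrac n2,\tfrac n2]$ (using that $\eta\mapsto e^{-2i\pi\frac xn\eta}$ is $n$-periodic for $x\in\Z$), one finds $\mc F_n(q)(\xi)=\sum_{m\in\Z}(G_0\,\mc F f)(\xi+mn)$. The Plancherel identity for $\mc F_n$ then yields
\[
\tfrac1n\sum_{x\in\Z}\big[q(\tfrac xn)-q_n(\tfrac xn)\big]^2=\int_{[-\frac n2,\frac n2]}\Big|\,\textstyle\sum_{m\in\Z}(G_0\,\mc F f)(\xi+mn)-n^{3/2}G_n(\tfrac\xi n)\mc F_n(f)(\xi)\Big|^2 d\xi .
\]
Using the homogeneity relation $n^{3/2}G_0(\tfrac\xi n)=G_0(\xi)$, I would split the integrand into the periodization tail $\sum_{m\ne0}(G_0\,\mc F f)(\xi+mn)$, the term $G_0(\xi)\big(\mc F f-\mc F_n f\big)(\xi)$, and the main term $n^{3/2}\big(G_0-G_n\big)(\tfrac\xi n)\,\mc F_n(f)(\xi)$.

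The first two pieces are routine. The tail contributes at most $\int_{|\eta|>n/2}|G_0\,\mc F f|^2\,d\eta$, which vanishes because $|G_0(\eta)|^2\sim|\eta|^3$ and $\mc F f$ is Schwartz, so $G_0\,\mc F f\in\mathbf L^2(\R)$. The second piece equals $\tfrac{\pi^3}2\int|\xi|^3\,|\mc F_n f-\mc F f|^2\,d\xi$, which tends to $0$ by Corollary \ref{cor:fourier} with $p=3$.

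The main obstacle is the third piece, $\int_{[-\frac n2,\frac n2]}n^3\big|G_0-G_n\big|^2(\tfrac\xi n)\,|\mc F_n f(\xi)|^2\,d\xi$. After the substitution $y=\xi/n$ it becomes $n^4\int_{-1/2}^{1/2}|G_0-G_n|^2(y)\,|\mc F_n f(ny)|^2\,dy$, into which I would insert \eqref{eq:G000}. Using $\gamma_n=cn^{-b}$ and the rapid decay of $\mc F_n f$ from Lemma \ref{lem:sfp}, the three resulting contributions carry prefactors of order $n^{-1}$, $n^{4-4b}$ (up to a logarithmic factor coming from $\int|\sin\pi y|^{-1}\,dy$), and $n^{2-2b}$; all exponents are negative precisely because $b>1$, and this is exactly where the hypothesis enters. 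The genuinely delicate point is the singular factor $\gamma_n^2|\sin\pi y|^{-1/2}$ near $y=0$, where the bound \eqref{eq:G000} fails to be square-integrable. I would handle it by isolating the frequency band $|\xi|\lesssim n\gamma_n=cn^{1-b}$: since $b>1$ this band shrinks to a point, and there one exploits that $\Omega$ vanishes on the antidiagonal (so $G_n(0)=G_0(0)=0$ and both symbols are $O(|y|^{3/2})$) to bound the integrand directly; on the complementary region $|\sin\pi y|\gtrsim\gamma_n$ the estimate \eqref{eq:G000} applies and the logarithmic divergence of $\int_{\gamma_n}|\sin\pi y|^{-1}\,dy$ is harmless against the factor $n^{4-4b}$.
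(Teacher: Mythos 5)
Your proposal is correct in substance and follows the same route as the paper's proof: the same multiplier identity $\mc F_n(q_n)(\xi)=n^{3/2}G_n\big(\tfrac{\xi}{n}\big)\mc F_n(f)(\xi)$ (obtained from \eqref{ecB.4} by symmetrization and Lemma \ref{lem:cov}), the same three-way decomposition (your aliasing term coincides, via Plancherel, with the paper's term $(I)$, which the paper instead bounds in physical space by integration by parts, citing \cite{BGJ}; your second piece is the paper's $(II)$, which you handle directly through Corollary \ref{cor:fourier}), and the same key estimate \eqref{eq:G000} for the main term, with the correct rates $n^{-1}$, $n^{4-4b}\log n$ and $n^{2-2b}$. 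Two details on your side need patching: first, the aliasing piece is not literally bounded by $\int_{|\eta|>n/2}|G_0\,\mc F f|^2 d\eta$ --- Minkowski's inequality in $\mathbf{L}^2$ gives the sum of norms over the translated intervals, not the norm over their union --- although the rapid decay of $G_0\,\mc F f$ makes the conclusion immediate anyway; second, on the band $|y|\lesssim\gamma_n$ the bound $|G_n(y)|\leqslant C|y|^{3/2}$ does not follow merely from $G_n(0)=G_0(0)=0$: it requires the quantitative estimate $|G_n(y)|\leqslant C\sin^2(\pi y)/\sqrt{\gamma_n}$, which one reads off the residue formula for $G_n$ derived in the proof of Lemma \ref{lem:G0345}, and which indeed yields $|G_n(y)|\leqslant C|y|^{3/2}$ when $|y|\leqslant\gamma_n$. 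That said, your treatment of the singular term is not just a variant: it repairs a genuine flaw in the paper's own argument. For the contribution of $\gamma_n^2|\sin(\pi y)|^{-1/2}$ in \eqref{eq:G000}, the paper asserts that $z\mapsto|\sin(\pi z)|^{-1}$ is integrable on $\big[-\tfrac12,\tfrac12\big]$; this is false, since $|\sin(\pi z)|^{-1}\sim(\pi|z|)^{-1}$ at the origin and $|\mc F_n(f)(nz)|^2$ does not vanish at $z=0$, so the paper's bound $Cn^4\gamma_n^4\int_{-1/2}^{1/2}|\sin(\pi z)|^{-1}\,|\mc F_n(f)(nz)|^2 dz$ is in fact infinite. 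Your cutoff at $|\sin(\pi y)|\sim\gamma_n$ is exactly the needed fix: outside the band, \eqref{eq:G000} applies and the divergence is only logarithmic, giving $O(n^{4-4b}\log n)$; inside the band, the direct bounds on $G_0$ and $G_n$ give $O(n^4\gamma_n^4)$; both vanish precisely when $b>1$. With the two details above filled in, your proof is complete, and more careful than the published one.
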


\begin{proof}
Since $\mc F_n(h_n)$ is a symmetric function we can easily see that
\begin{equation*}
\mc F_n(q_n) (\xi) = -\frac{i}{2}  \sum_{x \in \Z} e^{ \tfrac{2i \pi \xi x}{n}} \iint_{\big[-\tfrac{n}{2}, \tfrac{n}{2}\big]^2} e^{-\tfrac{2i \pi (k+\ell) x}{n}}  \Omega \big( \tfrac{k}{n}, \tfrac{\ell}{n}\big)  \mc F_n(h_n) (k,\ell) \, dk d\ell.
\end{equation*}
We use now Lemma \ref{lem:cov} and the inverse Fourier transform relation to get
\begin{equation*}
\begin{split}
\mc F_n(q_n) (\xi) =- \cfrac{in}{2} \int_{-n/2}^{n/2} \Omega  \big( \tfrac{\xi -\ell}{n}, \tfrac{\ell}{n}\big)  \mc F_n(h_n) (\xi- \ell,\ell) \, d\ell.
\end{split}
\end{equation*}
By the explicit expression \eqref{ecB.4} of $\mc F_n(h_n)$ we obtain that
\begin{equation*}
\mc F_n({q_n}) (\xi) = \frac{\sqrt{n}}{4} \left[\int_{-n/2}^{n/2} \frac{\Omega^2  \big( \tfrac{\xi -\ell}{n}, \tfrac{\ell}{n}\big)}{4\gamma_n+\Lambda \big( \tfrac{\xi - \ell}{n}, \tfrac{\ell}{n}\big )-i\Omega \big( \tfrac{\xi - \ell}{n}, \tfrac{\ell}{n}\big)} \, d\ell \right]\; \mc F_n(f) (\xi).
\end{equation*}
Again by the inverse Fourier transform we get that
\begin{equation*}
q_n\big(\tfrac{x}{n}\big)=\int_{-n /2}^{n/2}\; e^{- \tfrac{2i\pi \xi x}{n}}  n^{3/2} G_n \big( \tfrac{\xi}{n}\big)  \mc F_n(f) (\xi)   \; d\xi.
\end{equation*}
Then we have
\begin{equation*}
\begin{split}
q \big(\tfrac{x}{n} \big) -q_n \big( \tfrac{x}{n}\big)
		= &\int_{|\xi| \geqslant n/2} \; e^{- \tfrac{2i\pi \xi x}{n}}  \; G_0 (\xi)  \;  \mc F f (\xi)   \; d\xi\\
		+& \int_{|\xi| \le n/2} \; e^{- \tfrac{2i\pi \xi x}{n}}  \; G_0 (\xi)  \; \big[  \mc F f (\xi)  - \mc F_n(f) (\xi)  \big]\; d\xi\\
              +& n^{3/2}\int_{|\xi| \le n/2} \; e^{- \tfrac{2i\pi \xi x}{n}}  (G_0- G_n) \big( \tfrac{\xi}{n}\big) \mc F_n(f) (\xi) \; d\xi.
\end{split}
\end{equation*}
Above we have used the fact that $n^{3/2} G_0\big(\tfrac{\xi}{n})=G_0(\xi)$.
Then we use the triangular inequality and Plancherel's Theorem in the two last terms of the RHS to write
\begin{equation}
\label{eq:l2limit}
\begin{split}
 \frac{1}{n} \sum_{x \in \Z} \big[ q \big(\tfrac{x}{n} \big) -q_n \big( \tfrac{x}{n}\big)\big]^2
 		&\le \frac{1}{n} \sum_{x \in \Z} \left| \int_{|\xi| \geqslant n/2} \; e^{- \tfrac{2i\pi \xi x}{n}}  \; G_0 (\xi)  \;  (\mc F f) (\xi)   \; d\xi \right|^2\\
		&\quad + \int_{|\xi| \le n/2} \left|  G_0 (\xi)  \big[  \mc F f (\xi)  - \mc F_n(f) (\xi)  \big]\right|^2 d\xi \\
		& \quad +n^3 \int_{|\xi| \le n/2} \left|  (G_0- G_n) \big( \tfrac{\xi}{n}\big) \mc F_n(f) (\xi) \right|^2  d\xi \\
		&= (I) +(II)+(III).
\end{split}
\end{equation}
We refer to \cite{BGJ} for a proof of the following fact: both  terms $(I)$ and $(II)$ give a trivial contribution in \eqref{eq:l2limit}. The first one is estimated by performing an integration by parts and using the fact that the Fourier transform ${\mc F} f$ of $f$ is in the Schwartz space and that $G_0$ and $G_0'$ grow polynomially. For the second one, a change of variables $\xi=\frac{y}{n}$ and the fact that $f$ is in the Schwartz space together with  Lemma \ref{lem:sfp} permit to conclude.

The contribution of $(III)$ is estimated by using \eqref{eq:G000} and is different from \cite{BGJ}.  Recall the trivial inequality $\vert \sin(\pi y) \vert \le \vert \pi y \vert$, for $\vert y \vert \le 1/2$. The first term of the RHS gives the upper bound
\begin{equation*}
 \frac{C}{n} \int_{|\xi| \le n/2} \; | \xi|^4 \; | \mc F_n(f) (\xi) |^2 \; d\xi = C \int_{-1/2}^{1/2}  n^4|z|^4 | \mc F_n(f) (nz) |^2 dz
\end{equation*}
which goes to $0$, as $n\to\infty$, by Lemma \ref{lem:sfp} applied with $p=2$. We now deal with the last term of the RHS of \eqref{eq:G000}, which gives the upper bound
\begin{equation*}
 C n^2 \gamma_n^2\int_{|\xi| \le n/2} \; | \xi| \; | \mc F_n(f) (\xi) |^2 \; d\xi = C\; (\gamma_n^2 n^3) \int_{-1/2}^{1/2} n|z| \; | \mc F_n(f) (nz) |^2 dz,
\end{equation*}
which goes to $0$, as $n\to\infty$. Indeed, this is a consequence of Lemma \ref{lem:sfp}, and of the fact that $\gamma_n^2 n^2$ goes to 0 (as long as $b > 1$). The second part gives
\[C n^3 \gamma_n^4 \int_{|\xi|\le n/2} \; \frac{ | \mc F_n(f) (\xi) |^2}{| \sin\big( \tfrac{\pi \xi}{n}\big)|} d\xi=Cn^4\gamma_n^4 \int_{-1/2}^{1/2} \frac{| {\mc F_n(f)} (nz) |^2}{|\sin(\pi z)|}dz.\]
Again, since $z \mapsto |\sin(\pi z)|^{-1}$ is integrable on $\big[\tfrac{-1}{2};\tfrac{1}{2} \big]$, and thanks to Lemma \ref{lem:sfp}, this bound goes to 0 as soon as $\gamma_n^4 n^4$ goes to 0, which is automatically satisfied if $b>1$.
\end{proof}

\subsection{Proof of Lemma \ref{lem:5,7.}}
\label{sec:proof2}
The proof is similar to the proof given in \cite{BGJ}.  Let $w_n: \tfrac{1}{n}\Z \to \R$ be defined by
\begin{equation*}
w_n \big(\tfrac{x}{n}\big) = h_n \big( \tfrac{x}{n} , \tfrac{x+1}{n} \big) -h_n \big( \tfrac{x}{n} , \tfrac{x}{n} \big)
\end{equation*}
and observe that
\begin{equation*}
\tfrac{1}{\sqrt{n}} {\tilde{\mc D}_n} h_n \; \big( \tfrac{x}{n}, \tfrac{y}{n} \big) = n^{3/2}
\begin{cases}
w_n \big(\tfrac{x}{n} \big), &y =x+1,\\
w_n \big(\tfrac{x-1}{n} \big),& y =x-1,\\
0, & \text{otherwise}.
\end{cases}
\end{equation*}
The Fourier transform $\mc F_n(v_n)$ is thus given by
\begin{equation}
\label{eq:vnhat}
\mc F_n(v_n) (k, \ell) = -\cfrac{1}{n} \, \cfrac{e^{2i \pi \tfrac{k}{n}} +e^{2i \pi \tfrac{\ell}{n}} }{ \big( \Lambda +4 \gamma_n -i \Omega \big) \big( \tfrac{k}{n}, \tfrac{\ell}{n}\big)} \, \mc F_n(w_n) (k+\ell).
\end{equation}
By using Lemma \ref{lem:cov}, we have that the Fourier transform of $w_n$ is given by
\begin{equation*}
\mc F_n(w_n) (\xi) =  \int_{-n/2}^{n/2}  \mc F_n(h_n) (\xi-\ell,\ell) \big\{  e^{-\tfrac{2i\pi\ell}{n}}-1 \big\} \, d\ell
\end{equation*}
By \eqref{ecB.4} we get
\begin{equation}
\label{eq:wnhat}
\mc F_n(w_n) (\xi) = -\frac{\sqrt{n}}{2} I_n\big( \tfrac{\xi}{n} \big) \mc F_n(f) (\xi)
\end{equation}
where the function $I_n$ is defined by
\begin{equation}
\label{eq:I}
I_n(y)= \int_{-1/2}^{1/2} \left( \cfrac{ \, i \, \Omega R }{\Lambda + 4\gamma_n -i \Omega} \right) \, (y-x,x) dx.
\end{equation}

\paragraph{\sc Proof of \eqref{est vn}}
As in \cite{BGJ}, we can easily get
\begin{equation*}
\| v_n \|_{2,n}^2 \le  Cn  \int_{-1/2}^{1/2} |\mc F_n(f) (ny)|^2 \big| I_n  (y) \big|^2 W(y) dy\\
\end{equation*}
by using Lemma \ref{lem:sfp}. Then, from  Lemma \ref{lem:I} and since $W(y)\le C\vert y \vert^{-3/2}$ (see \cite{BGJ}, Lemma F.5), we get that \begin{equation*}
\|v_n\|_{2,n}^2 \le Cn \int_{-1/2}^{1/2} |\mc F_n(f) (ny)|^2 |\sin (\pi y)|^{3/2} dy \le  C \int_{-1/2}^{1/2}  \cfrac{|y|^{3/2}}{1+|ny|^p} dy = \cfrac{C}{n^{3/2}} \int_{-n/2}^{n/2} \cfrac{|z|^{3/2}}{1+|z|^p} dz,
\end{equation*}
which goes to $0$, as soon as $p$ is chosen bigger than $3$.
\paragraph{\sc Proof of \eqref{est der vn}}

Following \cite{BGJ}, straightforward computations lead to

\begin{equation*}
{\mc F_n({{\mc D}_n v_n}})(\xi) =-{n}\big(1-e^{\tfrac{2i\pi \xi }{n}}\big){\mc F_n(w_n)} (\xi)J_n\big (\tfrac{\xi}{n}\big),
\end{equation*}

where  $J_n$ is given by
\begin{equation}
\label{eq:J}
J_n(y) = \int_{-1/2}^{1/2} \cfrac{1+e^{2i \pi (y-2x)}}{(\Lambda+4\gamma_n-i\Omega)(y-x,x) } dx.
\end{equation}
Now, by using \eqref{eq:wnhat} we finally get that
\begin{equation*}
{\mc F_n({{\mc D}_n v_n}})(\xi)=\frac{n^{3/2}}{2}\big( 1- e^{ \tfrac{2i \pi \xi}{n}} \big)  \mc F_n(f) (\xi) I_n \big (\tfrac{\xi}{n}\big) J_n\big (\tfrac{\xi}{n}\big),
\end{equation*}
where $I_n$ is defined by \eqref{eq:I}.

By Plancherel-Parseval's relation we have to prove that
\begin{equation*}
\begin{split}
&n^3 \int_{-n/2}^{n/2} \sin^2 \big (\pi \tfrac{\xi}{n}\big )\big| \mc F_n(f) (\xi) \big|^2 \big| I_n \big (\tfrac{\xi}{n}\big) \big|^2 \big| J_n\big (\tfrac{\xi}{n}\big) \big|^2    d\xi\\=& n^4 \int_{-1/2}^{1/2} \sin^2 (\pi y) | I_n(y)|^2 | J_n(y)|^2 |\mc F_n(f) (ny)|^2 dy
\end{split}
\end{equation*}
vanishes, as $n\to\infty$. By Lemma \ref{lem:sfp}, Lemma \ref{lem:I}, this is equivalent to show that the following term goes to 0, as $n\to\infty$:
\begin{equation*}
n^4 \int_{-1/2}^{1/2} \cfrac{|y|^4}{1+|ny|^p} dy = \cfrac{1}{n} \int_{-n/2}^{n/2} \cfrac{|z|^4}{1+|z|^p} dz.
\end{equation*}
For $p$ bigger than $5$, this term goes to $0$, as $n\to\infty$.

 \paragraph{\sc Proof of \eqref{est tilde d vn}}

 Let $\theta_n: \tfrac{1}{n}\Z \to \R$ be defined by
\[
\theta_n \big(\tfrac{x}{n}\big) = v_n \big( \tfrac{x}{n}, \tfrac{x+1}{n}\big)-v_n \big( \tfrac{x}{n}, \tfrac{x}{n}\big)
\]
and observe that
\begin{equation*}
\tfrac{1}{\sqrt{n}} {\tilde{\mc D}_n} v_n \; \big( \tfrac{x}{n}, \tfrac{y}{n} \big) = n^{3/2}
\begin{cases}
\theta_n \big(\tfrac{x}{n} \big), & y =x+1,\\
\theta_n \big(\tfrac{x-1}{n} \big), & y =x-1,\\
0, & \text{otherwise}.
\end{cases}
\end{equation*}
We have to show that
\begin{equation*}
\lim_{n \to \infty} n \sum_{x \in \Z} \theta_n^2 (x) =0
\end{equation*}
which is equivalent,  by the Plancherel-Parseval's, relation to show that
\begin{equation*}
\lim_{n \to + \infty} n^2 \int_{-n/2}^{n/2} {\mc F_n(\theta_n)} (\xi) d\xi =0.
\end{equation*}
By using Lemma \ref{lem:cov}, we have that the Fourier transform of $\theta_n$ is given by
\begin{equation*}
\mc F_n(\theta_n) (\xi) =  \int_{-n/2}^{n/2}  \mc F_n(v_n) (\xi-\ell,\ell) \big\{ e^{-\tfrac{2i\pi\ell}{n}}-1  \big\} \, d\ell.
\end{equation*}
Performing a change of variables and using \eqref{eq:vnhat} and \eqref{eq:wnhat}, we get that \[\mc F_n
(\theta_n)(\xi)=\frac{\sqrt{n}}{2}\mc F_n(f)(\xi)I_n\big(\tfrac{\xi}{n}\big) K_n\big(\tfrac{\xi}{n}\big),
\]
where $I_n$ is defined by \eqref{eq:I} and $K_n$ is given by
\begin{equation}
\label{eq:K} K_n(y)=\int_{-1/2}^{1/2} \frac{\big(e^{2i\pi (y-x)}+e^{2i\pi x}\big) \big( e^{-2i\pi x}-1 \big)}{(\Lambda+4\gamma_n-i\Omega)(y-x,x)} \, dx.\end{equation}
We need to show that $\lim_{n \to \infty} n^2 \| \theta_n \|_{2,n}^2 =0.$
By Plancherel-Parseval relation, this is equivalent to prove that
\begin{equation*}
\lim_{n \to \infty} n^3 \int_{-n/2}^{n/2} \big| \mc F_n(f) (\xi) \big|^2 \big|  I_n \big( \tfrac{\xi}{n} \big) \big|^2  \big| K_n \big( \tfrac{\xi}{n} \big) \big|^2 \, d\xi =0.
\end{equation*}
By using the change of variables $y=\xi/n$, Lemma \ref{lem:sfp}, Lemma \ref{lem:I}, we have
\[
n^3 \int_{-n/2}^{n/2} \big| \mc F_n(f) (\xi) \big|^2 \big|  I_n \big( \tfrac{\xi}{n} \big) \big|^2  \big| K_n \big( \tfrac{\xi}{n} \big) \big|^2 \, d\xi
 \le Cn^4 \int_{-1/2}^{1/2} \cfrac{|y|^4}{1+|ny|^p} dy  = \cfrac{C}{n} \int_{-n/2}^{n/2} \cfrac{|z|^4}{1+|z|^p} dz
\]
which goes to $0$, as $n\to\infty$, for $p$ bigger than $5$.

\subsection{Few integral estimates}

The following lemma is the new technical estimate which take into account the extra-term coming from the velocity-flip noise of intensity $\gamma_n$.

\begin{lemma}
\label{lem:G0345}
Recall that $G_n$ and $G_0$ are defined by \eqref{eq:fF} and \eqref{eq:G0}. There exists a constant $C>0$ such that for any $|y| \le 1/2$ and for all $n \in \bb N_0$,
\begin{equation}
\label{eq:G000}
|G_n (y) -G_0(y)| \le C \left[ \sin^2 (\pi y) + \gamma_n^2\, |\sin (\pi y)|^{-1/2} + \gamma_n\, |\sin (\pi y)|^{1/2}\right].
\end{equation}
\end{lemma}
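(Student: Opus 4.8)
The plan is to reduce $G_n$ to a one–dimensional oscillatory integral by trigonometric identities, isolate the $\gamma_n=0$ contribution (which is controlled in \cite{BGJ}), and handle the remaining $\gamma_n$-dependent part through a resolvent-type difference identity followed by a careful near-endpoint analysis. By the conjugation symmetry $G_n(-y)=\overline{G_n(y)}$, $G_0(-y)=\overline{G_0(y)}$, it suffices to treat $y\in[0,1/2]$, and I write $p=\pi y$, $\delta=\sin(\pi y)$.

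First I would simplify $G_n$. Using $\sin A+\sin B=2\sin\frac{A+B}2\cos\frac{A-B}2$ and $\sin^2\alpha+\sin^2\beta=1-\cos(\alpha+\beta)\cos(\alpha-\beta)$ with $\alpha=\pi(y-z),\beta=\pi z$, the functions in \eqref{eq:lambdagamma} become $\Omega(y-z,z)=4\sin(\pi y)\cos(\pi(y-2z))$ and $\Lambda(y-z,z)=4\bigl[1-\cos(\pi y)\cos(\pi(y-2z))\bigr]$. Substituting $s=\pi(y-2z)$, noting that $\cos\pi y+i\sin\pi y=e^{i\pi y}$ and that the integrand is $2\pi$-periodic in $s$, one obtains the clean formula
\[
G_n(y)=\frac{\sin^2(\pi y)}{2\pi}\int_{-\pi}^{\pi}\frac{\cos^2 s}{1+\gamma_n-e^{i\pi y}\cos s}\,ds .
\]
Denote by $G^{(0)}$ the right-hand side with $\gamma_n=0$. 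The $\gamma_n=0$ estimate $|G^{(0)}(y)-G_0(y)|\le C\sin^2(\pi y)$ is exactly the case treated in \cite{BGJ}, and it accounts for the first term in \eqref{eq:G000}. It remains to bound $G_n-G^{(0)}$, for which I use the resolvent-type identity
\[
G_n(y)-G^{(0)}(y)=-\frac{\gamma_n\sin^2(\pi y)}{2\pi}\int_{-\pi}^{\pi}\frac{\cos^2 s\,ds}{\bigl(1+\gamma_n-e^{i\pi y}\cos s\bigr)\bigl(1-e^{i\pi y}\cos s\bigr)} .
\]

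To estimate the modulus I would use the sharp lower bounds obtained by completing the square in $\cos s$, namely $\bigl|1-e^{i\pi y}\cos s\bigr|=\sqrt{(\cos s-\cos\pi y)^2+\sin^2\pi y}$ and $\bigl|1+\gamma_n-e^{i\pi y}\cos s\bigr|=\sqrt{(\cos s-(1+\gamma_n)\cos\pi y)^2+(1+\gamma_n)^2\sin^2\pi y}$. After the change of variables $c=\cos s$ (so $\int_{-\pi}^{\pi}f(\cos s)\,ds=2\int_{-1}^{1}f(c)(1-c^2)^{-1/2}\,dc$) this reduces the problem to
\[
|G_n-G^{(0)}|\le \frac{\gamma_n\delta^2}{\pi}\,\mathcal I,\qquad
\mathcal I=\int_{-1}^{1}\frac{c^2\,dc}{\sqrt{1-c^2}\,\sqrt{(c-\cos p)^2+\delta^2}\,\sqrt{(c-(1+\gamma_n)\cos p)^2+(1+\gamma_n)^2\delta^2}} .
\]
The contributions away from $c=1$ (the endpoint $c=-1$ and the bulk) are $O(1)$ and give $O(\gamma_n\delta^2)\le C\gamma_n\delta^{1/2}$; the delicate region is $c$ near $1$ when $p\to0$. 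Setting $c=1-r$ and using $\sqrt{1-c^2}\asymp\sqrt r$, $1-\cos p\asymp\delta^2$, the two resonance factors become $\asymp\sqrt{r^2+\delta^2}$ and $\asymp\sqrt{(r+\gamma_n)^2+\delta^2}$, so that the model integral to control is $\int_0^{c_0}\bigl(\sqrt r\,\sqrt{r^2+\delta^2}\,\sqrt{(r+\gamma_n)^2+\delta^2}\bigr)^{-1}dr$. Splitting according to $\gamma_n\le\delta$ (where the rescaling $r=\delta\rho$ gives order $\delta^{-3/2}$) and $\gamma_n\ge\delta$ (where splitting into $[0,\delta]$, $[\delta,\gamma_n]$, $[\gamma_n,\infty)$ gives order $\gamma_n^{-1}\delta^{-1/2}$), one finds $\gamma_n\delta^2\mathcal I\le C(\gamma_n\delta^{1/2}+\delta^{3/2})\le C\gamma_n\delta^{1/2}$, whence $|G_n-G^{(0)}|\le C\gamma_n|\sin(\pi y)|^{1/2}$; together with the $\gamma_n=0$ bound and the trivial inequality $\gamma_n^2|\sin\pi y|^{-1/2}\ge0$ this yields \eqref{eq:G000}.

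The main obstacle is precisely this near-endpoint analysis: as $y\to 0$ the resonance point $c=\cos(\pi y)$ sits within $O(\delta^2)$ of the integration endpoint $c=1$, while the Lorentzian width is $\delta\gg\delta^2$, so the resonance is truncated by the endpoint. One must therefore resist the naive estimate (which would produce the wrong power $\delta^{-2}$) and carry out the rescaling $1-c\sim r$ together with the two-regime split $\gamma_n\lessgtr\sin(\pi y)$ in order to recover the correct $\delta^{-3/2}$ behavior and to track the exact dependence on $\gamma_n$; this is the new input relative to \cite{BGJ}.
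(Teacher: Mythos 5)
Your proposal is correct, and it takes a genuinely different route from the paper's proof. The paper proceeds by exact computation: setting $w=e^{2i\pi y}$ and $z=e^{2i\pi x}$, it writes $G_n$ as a contour integral of a rational function with poles at $0$ and $z_{\pm}$ (the roots of $z^2-2z(1+\gamma_n)+w$), checks $|z_-|<1<|z_+|$, and obtains from the residue theorem the closed formula
\[
G_n(y)=\frac{(w-1)^2}{4w^2}\left[1-\frac{(1+\gamma_n)^2}{\sqrt{\alpha}}\,e^{-i\theta/2}\right],
\qquad \alpha^2=4(1+\gamma_n)^2\sin^2(\pi y)+\big[(1+\gamma_n)^2-1\big]^2,
\]
from which the three terms of \eqref{eq:G000} emerge by elementary expansions: the prefactor gives $\sin^2(\pi y)$, the quartic-root correction hidden in $\sqrt{\alpha}$ gives $\gamma_n^2\,|\sin(\pi y)|^{-1/2}$, and the factors $(1+\gamma_n)^{3/2}-1$ together with the $\gamma_n$-shift of the phase $\theta$ give $\gamma_n\,|\sin(\pi y)|^{1/2}$. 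You instead stay entirely real-variable: your one-dimensional reduction of $G_n$ is correct (the trigonometric identities and the periodicity step check out), the resolvent identity cleanly isolates the $\gamma_n$-dependence, and your endpoint-resonance analysis is sound — the comparisons of the two resonance factors with $\sqrt{r^2+\delta^2}$ and $\sqrt{(r+\gamma_n)^2+\delta^2}$ hold with uniform constants, and the two regimes $\gamma_n\le\delta$ and $\gamma_n\ge\delta$ indeed give $|G_n-G^{(0)}|\le C\gamma_n|\sin(\pi y)|^{1/2}$ in both cases. Note that your conclusion is in fact stronger than \eqref{eq:G000}, since the middle term $\gamma_n^2|\sin(\pi y)|^{-1/2}$ never appears; as the lemma's right-hand side is a sum of nonnegative terms, the stronger bound implies the stated one, and it is consistent with the paper's formula (there the quartic-root error is bounded by $C\gamma_n^2/\sin^2(\pi y)$, but one could equally use $\min\{1,\,C\gamma_n^2/\sin^2(\pi y)\}$ and recover your two-term bound). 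The trade-off between the approaches: the paper's residue computation is self-contained and yields exact asymptotics, with a term $\gamma_n^2|\sin(\pi y)|^{-1/2}$ that is sharper than $\gamma_n|\sin(\pi y)|^{1/2}$ in the regime $\gamma_n\ll|\sin(\pi y)|$; your argument avoids complex analysis altogether and is modular in $\gamma_n$, but it outsources the base case $|G^{(0)}-G_0|\le C\sin^2(\pi y)$ to \cite{BGJ} — a legitimate citation, since the $\gamma=0$ function there is exactly your $G^{(0)}$ and that estimate is the $\gamma=0$ version of this lemma, but it is the one external input your proof needs and the paper's does not.
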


\begin{proof}
We compute the function $G_n$ thanks to the residue theorem. For any $y \in [-1/2, 1/2]$ we denote by $w:=w(y)$ the complex number $w=e^{2i\pi y}$. By denoting $z=e^{2i \pi x}$, for $x \in [-1/2, 1/2]$, we have that
\begin{equation*}
\begin{split}
 \Lambda (y-x,x)&= 4- z (w^{-1} +1) -z^{-1}( w+1),\\
i \, \Omega (y-x,x)&= z(1-w^{-1}) + z^{-1} (w-1).
\end{split}
\end{equation*}
We denote by ${\mc C}$ the unit circle positively oriented. Then, we have
\begin{equation*}
G_n(y)= \frac{1}{16 i \pi} \oint_{\mc C} f_{w} (z) dz
\end{equation*}
where the meromorphic function $f_{w}$ is defined by
\begin{equation*}
f_w (z)= \frac{[(w-1) +z^2 (1-w^{-1})]^2}{z^2 \big(z^2 -2 z (1+\gamma_n)+w\big)}.
\end{equation*}
The poles of $f_w$ are $0$ and $z_-, z_+$ which are the two solutions of $z^2 -2z(1+\gamma_n) +w$.
We can check that
\begin{equation}
\label{eq:zpm}
z_{\pm} = (1+\gamma_n) \pm \sqrt{ \alpha}\,  e^{ i \theta/2},
\end{equation}
where
\begin{align*}
\alpha^2&=4(1+\gamma_n)^2\sin^2(\pi y)+\big[ (1+\gamma_n)^2-1 \big]^2,\\
\theta & = \arctan\left(\frac{(1+\gamma_n)^2-\cos(2\pi y)}{\sin(2\pi y)}\right)-\frac{\pi}{2}{\rm sgn}(y).
\end{align*}
Observe that $|z_-| <1$ and $|z_+|>1$. Indeed, since
\[\frac{(1+\gamma_n)^2-\cos(2\pi y)}{\sin(2\pi y)}=\frac{\gamma_n^2 + 2\gamma_n + 2 \sin^2 (\pi y)}{\sin(2\pi y)}\]
we have that $\theta \in (-\tfrac{\pi}{2},\tfrac{\pi}{2})$ (distinguish the case $y>0$ and the case $y<0$). Moreover $\gamma_n>0$ so that a simple geometric argument permits to conclude that $|z_+|>1$. Since $z_- z_+ =1$, we have $|z_-|<1$.

By the residue theorem, we have
\begin{equation*}
\oint_{\mc C} f_{w} (z) dz = 2 \pi i \big[ {\rm {Res}} (f_w, 0) +{\rm {Res}} (f_w, z_-) \big]
\end{equation*}
where ${\rm {Res}} (f_w, a)$ denotes the value of the residue of $f_w$ at pole $a$. An elementary computation shows that
\begin{equation*}
\begin{aligned}
{\rm{Res}} (f_w,0) &= \cfrac{2 (w-1)^2}{w^2}, \\
{\rm{Res}} (f_w,z_-)&= \lim_{z \to z_-} (z-z_-) f_{w} (z)= \cfrac{1}{z_- -z_+} \cfrac{\big[(w-1) + (1-w^{-1})\; z_{-}^2\big]^2}{z_-^2}.
\end{aligned}
\end{equation*}
By using the fact that $z_-^2 = 2z_-(1+\gamma_n) -w$, we obtain that
\begin{equation*}
\begin{split}
G_n (y) &= \frac{1}{8} \big[{\rm Res} (f_w, 0) +{\rm {Res}} (f_w, z_-)\big] = \frac{(w-1)^2}{4 w^2}\left[ 1 + \frac{2(1+\gamma_n)^2}{z_- -z_+} \right]\\
&= \frac{(w-1)^2}{4 w^2}\left[ 1 - \frac{(1+\gamma_n)^2}{\sqrt{\alpha}} e^{-i\theta/2}\right]\\
&= \frac{(w-1)^2}{4 w^2} - \frac{(w-1)^2}{4 w^2} \frac{(1+ \gamma_n)^{3/2}}{\sqrt{2 | \sin (\pi y)|}} \left[1+ \frac{(2+\gamma_n)^2}{4(1+\gamma_n)^2} \frac{\gamma_n^2}{\sin^2 (\pi y)}\right]^{-1/4} e^{- i \theta /2}\\
&=\frac{(w-1)^2}{4 w^2} - \frac{(w-1)^2}{4 w^2} \frac{(1+ \gamma_n)^{3/2}}{\sqrt{2 | \sin (\pi y)|}} \, e^{- i \theta /2} +\ve_n (y)
\end{split}
\end{equation*}
where
\begin{equation*}
\ve_n (y) =\frac{(w-1)^2}{4 w^2} \frac{(1+ \gamma_n)^{3/2}}{\sqrt{2 | \sin (\pi y)|}} \, e^{- i \theta /2}\, \left\{ 1-\left[1+ \frac{(2+\gamma_n)^2}{4(1+\gamma_n)^2} \frac{\gamma_n^2}{\sin^2 (\pi y)}\right]^{-1/4} \right\}.
\end{equation*}

%
We deal separately with two terms: first,
\[ \left\vert \frac{(w-1)^2}{4w^2} \right\vert = \sin^2(\pi y).\]
Second, after noticing that \[ \frac{1}{4} \le \frac{(2+\gamma_n)^2}{4(1+\gamma_n)^2} \le 1,  \] we get
\[\left\vert 1-\left[1+ \frac{(2+\gamma_n)^2}{4(1+\gamma_n)^2} \frac{\gamma_n^2}{\sin^2 (\pi y)}\right]^{-1/4} \right\vert  \le C \left\vert \left[1+ C' \frac{\gamma_n^2}{\sin^2 (\pi y)}\right]^{1/4}-1 \right\vert
\le C'' \frac{\gamma_n^2}{\sin^2(\pi y)}.
\]
Then, we get easily that
\begin{equation*}
|\ve_n (y)| \le C \gamma_n^2 \, |\sin (\pi y)|^{-1/2} .
\end{equation*}
Moreover, we have that
\begin{equation*}
|(1+ \gamma_n)^{3/2} -1| \le C \gamma_n.
\end{equation*}
Since the derivative of $\arctan$ is bounded above by a constant, we  have also that
\begin{equation*}
\begin{split}
&\big|e^{-i \theta/2} - \frac{1}{\sqrt 2} (1 + i \rm{sgn} (y))e^{-i \pi y/2}  \big|\\
&=\left| \frac{1+i \rm{sgn} (y)}{\sqrt 2}\right| \,  \left| \exp\left\{ -\frac{i}{2} \left[ \arctan \left(\tan(\pi y) + \frac{\gamma_n (2+ \gamma_n)}{\sin (2\pi y)} \right)\right] \right\} - \exp\left\{-\frac{i}{2} \tan (\pi y) \right\} \right|\\
&\le C  \left|  \arctan \left(\tan(\pi y) + \frac{\gamma_n (2+ \gamma_n)}{\sin (2\pi y)} \right)  - \tan (\pi y) \right|\le C \frac{\gamma_n}{|\sin (\pi y)|}.
\end{split}
\end{equation*}

We deduce the result
\begin{equation*}
|G_n (y) -G_0(y)| \le C \left[ \sin^2 (\pi y) + \gamma_n^2\, |\sin (\pi y)|^{-1/2} + \gamma_n\, |\sin (\pi y)|^{1/2}\right].
\end{equation*}
\end{proof}

The last Lemma below is widely inspired from \cite{BGJ}.

\begin{lemma}
\label{lem:I}
The functions $I_n$, $J_n$, and $K_n$, respectively defined by  \eqref{eq:I}, \eqref{eq:J} and \eqref{eq:K}, satisfy for any $y\in \R$ and $n \in \bb N_0$,
\begin{align*}
| I_n(y)| &\le C |\sin (\pi y)|^{3/2}\\
|J_n(y)| &\le  C |\sin (\pi y)|^{-1/2}\\
|K_n(y)| &\le  C |\sin (\pi y)|^{1/2}
\end{align*}
where $C$ is a positive constant which does not depend on $y$ nor on $n$.
\end{lemma}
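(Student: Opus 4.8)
The plan is to evaluate the three integrals by the residue method already used for $G_n$ in the proof of Lemma~\ref{lem:G0345}, since $I_n$, $J_n$, $K_n$ all carry the same denominator $(\Lambda+4\gamma_n-i\Omega)(y-x,x)$. First I would fix $y$, set $w=e^{2i\pi y}$, substitute $z=e^{2i\pi x}$ (so that $dx=\tfrac{dz}{2i\pi z}$), and recall from that proof the identities
\[
\Lambda(y-x,x)=4-z(w^{-1}+1)-z^{-1}(w+1),\qquad i\,\Omega(y-x,x)=z(1-w^{-1})+z^{-1}(w-1),
\]
from which
\[
z\,\big(\Lambda+4\gamma_n-i\Omega\big)(y-x,x)=-2\big(z^2-2(1+\gamma_n)z+w\big)=-2(z-z_-)(z-z_+),
\]
with $z_\pm$ as in \eqref{eq:zpm}. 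Each of the numerators in \eqref{eq:I}, \eqref{eq:J} and \eqref{eq:K} becomes a Laurent polynomial in $z$ once $\Omega$ and the exponentials are expressed through $z$, so that $I_n$, $J_n$, $K_n$ turn into contour integrals over the positively oriented unit circle $\mc C$ of rational functions of $z$, to be evaluated by residues.

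The enclosed poles are $z_-$ (recall $|z_-|<1<|z_+|$ and $z_-z_+=w$ from the proof of Lemma~\ref{lem:G0345}) together with a double pole at $z=0$. I would compute both residues explicitly, simplifying throughout by means of $z_-+z_+=2(1+\gamma_n)$ and $z_\pm^2=2(1+\gamma_n)z_\pm-w$. The decisive quantity is $z_--z_+=-2\sqrt\alpha\,e^{i\theta/2}$ with $\alpha^2=4(1+\gamma_n)^2\sin^2(\pi y)+[(1+\gamma_n)^2-1]^2$; since $\alpha\geqslant 2|\sin(\pi y)|$ for every $n$, the factor $|z_--z_+|^{-1}=(2\sqrt\alpha)^{-1}$ is bounded by $C|\sin(\pi y)|^{-1/2}$ uniformly in $n$, and this is the common origin of the half-power. (One also checks $|z_-|\geqslant c>0$ uniformly, since $|z_+|$ stays bounded.)

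For $J_n$ the residue at $z_-$ equals $2(1+\gamma_n)\,[z_-(z_--z_+)]^{-1}$, which is already of order $|\sin(\pi y)|^{-1/2}$ and dominates the bounded residue at $0$; this settles $|J_n|\leqslant C|\sin(\pi y)|^{-1/2}$ directly. The cases $I_n$ and $K_n$ are more delicate: here each residue is individually of larger order (order $|\sin(\pi y)|$ and order $1$ respectively), but their leading parts cancel — in accordance with the fact that the $i\,\Omega$ factor makes the integrand of $I_n$ vanish identically at $y=0$ — and only after this cancellation does one recover the stated orders $|\sin(\pi y)|^{3/2}$ and $|\sin(\pi y)|^{1/2}$. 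I would exhibit the cancellation through a Taylor expansion of the closed form near the resonance, exactly parallel to the expansion \eqref{eq:G000} of $G_n-G_0$, the $\gamma_n$-dependent corrections being controlled by the same three error terms $\sin^2(\pi y)+\gamma_n^2|\sin(\pi y)|^{-1/2}+\gamma_n|\sin(\pi y)|^{1/2}$ appearing there. For $|\sin(\pi y)|$ bounded away from $0$ the denominator is bounded below and all three bounds are immediate, so only a neighbourhood of the resonance requires work.

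The main obstacle is precisely this leading-order cancellation for $I_n$ and $K_n$: the individual residues overshoot the target, and one must expand the explicit formula \eqref{eq:zpm} for $z_\pm$ carefully enough both to see the cancellation and to verify that the surviving remainder — together with the genuinely $\gamma_n$-dependent terms — is no larger than the claimed power of $|\sin(\pi y)|$, uniformly in $n$. This is the only place where the velocity-flip parameter $\gamma_n$ enters non-trivially, and it is handled exactly as in the estimate \eqref{eq:G000} already established in Lemma~\ref{lem:G0345}.
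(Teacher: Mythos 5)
Your method is the same as the paper's: with $w=e^{2i\pi y}$ and $z=e^{2i\pi x}$ one factors $z\,(\Lambda+4\gamma_n-i\Omega)(y-x,x)=-2(z-z_-)(z-z_+)$, and each of $I_n,J_n,K_n$ is evaluated by residues at the double pole $0$ and at $z_-$, the half-power coming from $|z_--z_+|^{-1}=(2\sqrt\alpha)^{-1}\le C|\sin(\pi y)|^{-1/2}$. Your treatment of $J_n$ coincides with the paper's and is complete.

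For $I_n$ and $K_n$ there are two points. First, the paper never performs a second cancellation analysis for $K_n$: it uses the exact identity $K_n(y)=-\tfrac{w}{w-1}I_n(y)$, so the $K_n$ bound follows from the $I_n$ bound upon division by $|w-1|=2|\sin(\pi y)|$; you should do the same and spare yourself one of the two delicate expansions. Second, and more substantively, your error-control step for $I_n$ would fail as written: the remainders $\gamma_n|\sin(\pi y)|^{1/2}$ and $\gamma_n^{2}|\sin(\pi y)|^{-1/2}$ that you import from \eqref{eq:G000} are \emph{not} dominated by $|\sin(\pi y)|^{3/2}$ in the regime $|\sin(\pi y)|\lesssim\gamma_n$, so they cannot yield the clean bound you claim to recover. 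The efficient route is exact algebra rather than a Taylor expansion: for the $f_w$ associated with $I_n$, using $z_-z_+=w$, $z_-+z_+=2(1+\gamma_n)$ and $z_\pm^2=2(1+\gamma_n)z_\pm-w$, one finds
\[
\mathrm{Res}(f_w,0)+\mathrm{Res}(f_w,z_-)=\frac{2\,(w-1-\gamma_n)}{z_+\,(z_--z_+)},
\qquad\text{whence}\qquad
|I_n(y)|\le C\big(|\sin(\pi y)|^{3/2}+\sqrt{\gamma_n}\,|\sin(\pi y)|\big),
\]
since $\sqrt\alpha\ge\sqrt2\,\max\{|\sin(\pi y)|^{1/2},\gamma_n^{1/2}\}$ and $|z_+|\ge1$. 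The extra term is genuinely present: letting $y\to0$ at fixed $n$ gives $|I_n(y)|$ of order $\sqrt{\gamma_n}\,|\sin(\pi y)|$, so the clean bounds of the lemma (with $C$ independent of $n$) actually fail for $|\sin(\pi y)|\lesssim\gamma_n$; the paper's own proof hides this by silently dropping the $\gamma_n$-corrections in its residue values $1-2/w$ and $1/z_-$. The discrepancy is harmless: the corrected bounds $|I_n|\le C(|\sin(\pi y)|^{3/2}+\sqrt{\gamma_n}|\sin(\pi y)|)$ and $|K_n|\le C(|\sin(\pi y)|^{1/2}+\sqrt{\gamma_n})$ still make the proofs of \eqref{est vn}, \eqref{est der vn} and \eqref{est tilde d vn} go through, because the $\gamma_n$-terms vanish after integration against $|\mc F_n(f)(ny)|^2$ as $\gamma_n\to0$. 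So your plan is structurally the paper's, but you should replace the \eqref{eq:G000}-style expansion by this exact computation and state the corrected bounds.
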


\begin{proof} We divide the proof into three parts, corresponding to the three inequalities
\begin{enumerate}
\item As previously, we compute $I_n$ by using the residue theorem. For any $y \in [-\frac{1}{2}, \frac{1}{2}]$ we denote by $w:=w(y)$ the complex number $w=e^{2i\pi y}$. Then we have
\begin{equation}
\label{eq:E8}
I_n(y)= -\frac{1}{4 i \pi} \cfrac{w-1}{w} \oint_{\mc C} f_{w} (z) dz,
\end{equation}
where the meromorphic function $f_{w}$ is defined by
\begin{equation*}
f_w (z)= \frac{(z-1)(z^2 +w)}{z^2 (z-z_+)(z-z_-)}
\end{equation*}
with $z_\pm$ defined by \eqref{eq:zpm}. We recall that $|z_-| <1$ and $|z_+|>1$ so that by the residue theorem we have
$$I_n(y) = -\cfrac{w-1}{2w} \left[ {\rm{Res}} (f_w,0) +{\rm {Res}} (f_w,z_-)\right].$$
A simple computation shows that
\begin{equation*}
{\rm{Res}} (f_w,0) =1-2/w, \quad {\rm {Res}} (f_w,z_-) =1/z_-.
\end{equation*}
It follows that
\begin{equation*}
I_n(y)= - \cfrac{w-1}{2w} \left[ \frac{1}{z_-} +1 -\frac{2}{w}\right].
\end{equation*}
Replacing $w$ and $z_-$ by their explicit values we get the result.

\item Then we also compute $J_n$ by using the residue theorem:
\begin{equation*}
J_n(y)= -\frac{1}{4 i \pi} \oint_{\mc C} f_{w} (z) dz
\end{equation*}
where the meromorphic function $f_{w}$ is defined by
\begin{equation*}
f_w (z)= \frac{(z^2 +w)}{z^2 (z-z_+)(z-z_-)}
\end{equation*}
with $z_\pm$ defined by \eqref{eq:zpm}. By the residue theorem, we get
$$J_n(y)=-\frac{1}{2} \left( {\rm{Res}} (f_w, 0) + {\rm Res} (f_w, z_-) \right).$$
A simple computation shows that
\begin{equation*}
{\rm{Res}} (f_w, 0)= - \frac{w}{2}, \quad {\rm{Res}} (f_w, z_-)=\frac{2}{z_- (z_- -z_+)}.
\end{equation*}
By using the explicit expressions for $w$, $z_\pm$, we get the result.

\item Finally, it is not difficult to see that
\begin{equation*}
K_n(y)= -\cfrac{w}{w-1} I_n(y),
\end{equation*}
 and by the first estimate the result follows.
 \end{enumerate}
\end{proof}

\end{document}